\documentclass[12pt]{amsart}

\usepackage{amssymb}
\usepackage{amsmath}
\usepackage{amsthm}
\usepackage{enumerate}
\usepackage{graphicx}
\usepackage{blkarray}
\usepackage{mathrsfs}
\usepackage{float}

\usepackage{caption}
\usepackage{subcaption}

\usepackage{pgfplots}
\pgfplotsset{compat=1.15}
\usepackage{mathrsfs}
\usetikzlibrary{arrows}
\usetikzlibrary[patterns]

\usepackage[hidelinks]{hyperref}  
 
\numberwithin{equation}{section}


\usepackage{amsthm}

\makeatletter
\newtheorem*{rep@theorem}{\rep@title}
\newcommand{\newrepthm}[2]{%
\newenvironment{rep#1}[1]{%
 \def\rep@title{#2 \ref{##1}}%
 \begin{rep@theorem}}%
 {\end{rep@theorem}}}
\makeatother

\theoremstyle{plain}
\newtheorem{thm}{Theorem}[section]
\newtheorem{prop}[thm]{Proposition}
\newtheorem{lem}[thm]{Lemma}
\newtheorem{cor}[thm]{Corollary}

\newrepthm{thm}{Theorem}
\newrepthm{lem}{Lemma}
\newrepthm{prop}{Proposition}

\theoremstyle{definition}
\newtheorem{defn}[thm]{Definition}

\theoremstyle{remark}
\newtheorem{rem}[thm]{Remark}
\newtheorem*{acknowledgements}{Acknowledgements}

\theoremstyle{plain}


\newcommand{\thmref}[1]{Theorem~\ref{#1}}
\newcommand{\propref}[1]{Proposition~\ref{#1}}
\newcommand{\secref}[1]{Section~\ref{#1}}

\newcommand{\lemref}[1]{Lemma~\ref{#1}}
\newcommand{\corref}[1]{Corollary~\ref{#1}}
\newcommand{\figref}[1]{Figure~\ref{#1}}

\newcommand{\eqnref}[1]{Equation~\eqref{#1}}


\newcommand{\calT}{{\mathcal T}}

\newcommand{\CC}{{\mathbb C}}

\newcommand{\RR}{{\mathbb R}}

\newcommand{\ZZ}{{\mathbb Z}}

\newcommand{\what}{\widehat}
\newcommand{\wbar}{\overline}

\newcommand{\CHAT}{{\what{\CC}}}


\newcommand{\eps}{{\varepsilon}}

\DeclareMathOperator{\im}{Im}
\DeclareMathOperator{\re}{Re}
\DeclareMathOperator{\arcsinh}{arcsinh}



\DeclareMathOperator{\el}{EL}
\DeclareMathOperator{\area}{area}
\DeclareMathOperator{\length}{length}
\DeclareMathOperator{\conf}{conf}

\title[The extremal length systole of the cube]{The extremal length systole of the cube punctured at its vertices}
\author{Samuel Dobchies}
\address{
Département de mathématiques, Université de Sherbrooke, 2500, boulevard de l'Université, Sherbrooke (QC), J1K 2R1, Canada}
\email{samuel.dobchies@usherbrooke.ca}
\author{Maxime Fortier Bourque}
\address{
D\'epartement de math\'ematiques et de statistique, Universit\'e de Montr\'eal, 2920, chemin de la Tour, Montr\'eal (QC), H3T 1J4, Canada}
\email{maxime.fortier.bourque@umontreal.ca}

\begin{document}

\begin{abstract}
    We prove that the extremal length systole of the cube punctured at its vertices is realized by the 12 curves surrounding its edges and give a characterization of the corresponding quadratic differentials, allowing us to estimate its value to high precision. The proof uses a mixture of exact calculations done using branched covers and elliptic integrals, together with estimates obtained using either the geometry of geodesic trajectories on the cube or explicit conformal maps.
\end{abstract}

\subjclass{31A15,30F45,30F30}

\maketitle

\section{Introduction}

Given a notion of length for closed curves on a surface, the correspon\-ding \emph{systole} is defined as the infimum of lengths of essential closed curves (those that are not homotopic to a point or a puncture) \cite{K07:SystolicGeometry}. Usually, length is calculated with respect to a Riemannian metric on the surface, such as the unique hyperbolic metric in a given conformal class. A different notion of length (defined purely in terms of the conformal structure) is also widely used in Teichm\"uller theory, namely, \emph{extremal length} (see \secref{sec:background}). The resulting \emph{extremal length systole} is a generalized systole in the sense of Bavard \cite{Bav:97} and has been studied in \cite{MGVP}, \cite{BolzaEL}, and \cite{NieWuXue}. So far, its exact value has been calculated for the $3$-times-punctured sphere, all $4$-times-punctured spheres, all tori, the regular octahedron punctured at its vertices, and the Bolza surface in genus $2$ \cite{BolzaEL}.

In the present paper, we compute the extremal length systole of the cube punctured at its vertices. We do not obtain an exact expression for its value, but we find a characterization which can in principle be used to estimate it to any desired precision. Our criterion can be stated as follows.

\begin{thm} \label{thm:main}
For $r<0$ and $z\in \CC$, let
\[
F_r(z) = \frac{z-r}{z(z^2-10z+1)(3z^2+2z+3)}.
\]
Then the extremal length systole of the cube punctured at its vertices is realized by the $12$ curves surrounding its edges and is equal to
\[
\inf_{r<0} \frac{4\int_0^{5 - 2\sqrt{6}} \sqrt{|F_{r}(z)|}dz }{ \min\left( \int_{r}^0 \sqrt{|F_{r}(z)|}dz , \int_{5 - 2\sqrt{6}}^{5 + 2\sqrt{6}} \sqrt{|F_{r}(z)|}dz \right)}.
\]
\end{thm}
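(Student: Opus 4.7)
The argument naturally divides into two essentially independent parts: computing the extremal length of an edge curve via a family of candidate quadratic differentials, and showing that no other essential curve is shorter.

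For the first part, I would let $\sigma$ be the $180^\circ$ rotation of the cube about the axis joining the midpoints of $e$ and its opposite edge $e'$; this generates the stabilizer of $\gamma_e$ in the rotation group. The Jenkins--Strebel quadratic differential realizing $\ext(\gamma_e)$ is then forced to be $\sigma$-invariant and descends to a meromorphic quadratic differential on the quotient $Y = X/\sigma$, a sphere carrying four punctures (the $\sigma$-orbits of the eight vertices) and two cone points of order two (the $\sigma$-fixed midpoints of $e$ and $e'$). Matching the local orders at the cone points, such a descended differential must have simple poles at the four punctures and at exactly one of the two cone points (five simple poles in all), plus a single simple zero, the remaining cone point being regular and lifting to a double zero on $X$. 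In a Möbius coordinate adapted to the real structure coming from the cube's reflections, I expect the differential to take exactly the form $F_r(z)\,dz^2$ with free parameter $r<0$ for the position of the zero; the explicit values $0$, $5\pm 2\sqrt6$, $-\tfrac13\pm\tfrac{2\sqrt2}{3}i$ of the poles should come from an elliptic-integral computation on the genus-two hyperelliptic double cover associated with $Y$.

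Given this family, the segment $[0,\,5-2\sqrt6]$ is a horizontal saddle connection between two real simple poles, while $[r,0]$ and $[5-2\sqrt6,\,5+2\sqrt6]$ are vertical saddle connections. Horizontal trajectories around $[0,\,5-2\sqrt6]$ close up to a cylinder of circumference $2\int_0^{5-2\sqrt6}\sqrt{|F_r|}\,dz$ (each side of the segment is traversed once) whose height is bounded by the smaller of the two vertical saddle connection lengths. Pulling back through the double cover $X\to Y$, under which the core curve lifts with degree two, the modulus halves, yielding an embedded annulus around $\gamma_e$ in $X$ of modulus at least $1/R(r)$, where $R(r)$ denotes the expression inside the infimum. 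This gives $\ext(\gamma_e)\le R(r)$ for every $r<0$, and I would establish the reverse inequality by showing that the balancing value $r^*$ (at which the two vertical integrals agree) corresponds to a genuine Jenkins--Strebel differential for $\gamma_e$, so that $\ext(\gamma_e) = \inf_{r<0} R(r)$.

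The second part, which I expect to be the main obstacle, is to rule out every other essential curve. Up to the cube's $48$-element symmetry group there are only finitely many homotopy classes of essential simple closed curves on the eight-punctured sphere $X$, classified by the partition of the vertices they induce. For each class not homotopic to an edge curve, I would produce a lower bound on the extremal length exceeding $\inf_{r<0} R(r)$, using the two complementary techniques suggested by the abstract: geometric estimates of closed geodesics on the flat cube together with the length-area inequality $\ext(\gamma)\ge \ell(\gamma)^2/\area(X)$, and explicit conformal maps---for instance, unfoldings of several faces of the cube---bounding the modulus of any embedded annulus homotopic to the given curve from above. The numerical value of $\inf R$ is accessible to any desired precision via elliptic integration, providing the concrete slack needed for each case of the comparison.
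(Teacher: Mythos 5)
Your first half tracks the paper closely: the quotient by the order-two rotation, the fact that the minimizing quadratic differential descends to a one-parameter family with a moving simple zero, the balancing criterion on the two vertical saddle-connection integrals, and the degree-two pullback doubling extremal length are all exactly what the paper does (in \S4). A few details are fuzzier than they should be. The pole/zero structure you describe (five simple poles, one at a single cone point, the other cone point regular) is what happens in the case $r_0 < 0$ that turns out to occur, but it is not forced a priori: the paper's Lemma~\ref{lem:unique_zero} has three cases (the zero could also lie in $(B,C]$ or $(C,\infty)$, with correspondingly different pole sets), and one must argue via the critical graph that the saddle connection $[A,B]$ is always present and then rule out the other cases numerically. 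Also the explicit constants $5\pm2\sqrt6$ and $-\tfrac13\pm\tfrac{2\sqrt2}{3}i$ do not come from a genus-two hyperelliptic cover --- they are just stereographic projection of the cube's vertices followed by two elementary M\"obius normalizations (Lemmas~\ref{lem:plus_sign} and~\ref{lem:edge_configuration}); no auxiliary Riemann surface is needed there.

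The real gap is in the second half. It is false that up to the cube's symmetry group there are finitely many homotopy classes of essential simple closed curves on an eight-punctured sphere, or that these are classified by the induced partition of the punctures: one can Dehn twist to produce infinitely many classes realizing any given partition, so a case-by-case enumeration cannot close. You need a uniform estimate covering all but finitely many classes at once. The paper gets this from the flat metric: Theorem~\ref{thm:rotation} and the vertex-to-vertex trajectory estimates of \S5 yield $\ell(\gamma, h_{\mathrm{flat}}) \ge 3\sqrt2$ (hence $\el(\gamma) \ge 3$) for every curve except the five explicit short families. Even granting this, the flat bound fails for two of those families: it gives only $8/3 \approx 2.67$ for double-edge curves and less for triangle curves, both below the target value $\approx 2.8432$. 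The paper handles these with Minsky's inequality $\el(\alpha)\el(\beta) \ge i(\alpha,\beta)^2$, pairing the double-edge curve against the edge curve (intersection $4$) and the triangle curve against a ``diamond'' curve whose extremal length it bounds from above by an explicit conformal embedding of an annulus. Your proposed tools --- flat-metric lower bounds and conformal upper bounds on embedded annuli --- do not by themselves supply a lower bound here: an upper bound on the modulus of \emph{one} embedded annulus says nothing, and bounding \emph{all} of them is exactly as hard as the original problem. Without the intersection-number duality, the triangle and double-edge cases remain open in your plan.
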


We also express the extremal length systole as the supremum of a slightly more complicated expression involving the integral of $\sqrt{|F_{r}|}$ over various intervals in \propref{prop:formulas}. By evaluating the lower and upper bounds at \[r = -9.0792887\](which is close to the unique point $r_0$ where they agree) with interval arithmetic, we obtain that the extremal length systole is contained in the interval \[ (2.84317,2.843187).\] 

While cases where the extremal length systole is given by a closed formula (as in the examples computed in \cite{BolzaEL}) are probably rare, the above result indicates that it might still be effectively computable in general. On the other hand, we rely heavily on the symmetries of the cube to prove \thmref{thm:main}. On a general punctured sphere, the calculation of the extremal length systole could be reduced to a finite collection of finite-dimensional optimization problems, but not necessarily to a single $1$-dimensional problem as for the cube.

\subsection*{The general strategy}

The general strategy for calculating the extremal length systole of a Riemann surface, developed in \cite{BolzaEL}, goes as follows:
\begin{enumerate}
    \item guess which essential curves have the smallest extremal length;
    \item find an upper bound $U$ for the extremal length of these curves;
    \item find a lower bound $L> U$ for the extremal length of all competing curves.
\end{enumerate}

Although the definition of extremal length makes it easy to bound from below (any conformal metric yields a lower bound), the estimates required for step (3) to succeed for curves that are not too complicated are usu\-al\-ly quite delicate. In \cite{BolzaEL}, the authors were able to compute the extremal length of the first and second shortest curves on the punctured octahedron explicitly in terms of elliptic integrals. Together with coarser estimates for the remaining curves, this allowed them to determine the extremal length systole. Two important points to emphasize are that lower bounds coming from natural test metrics were not good enough to show that the second shortest curves are longer than the shortest ones, and that exact calculations are rare. Indeed, we only know how to compute extremal length exactly for highly symmetric curves on highly symmetric surfaces. More precisely, we know how to do this if the quotient of the surface by the group of automorphisms that preserve the homotopy class of the curve has a deformation space of real dimension at most $2$.

\subsection*{Outline of the proof}

In the case of the cube, we are able to obtain explicit formulas in \secref{sec:explicit} for the extremal length of the curves that surround the faces (\figref{fig:face}) or the face diagonals (\figref{fig:diagonal}), the hexa\-go\-nal curves obtained by intersecting the cube with a plane that bisects a main diagonal perpendicularly (\figref{fig:hexagon}), as well as two other types of curves (Figures \ref{fig:staple} and \ref{fig:tripod}), in terms of elliptic integrals. To do these calculations, we find branched covers from the cube to pillowcases (doubles of rectangles). In two of these cases, we need to use the following interesting geometric fact.

\begin{replem}{lem:triangle}
If a cube is inscribed in the unit sphere in such a way that two of its vertices lie at $(0,0,\pm1)$, then the stereographic projection sends the remaining six vertices to the vertices and the midpoints of the sides of an equilateral triangle in the plane.
\end{replem}

\begin{figure}
    \centering
    \subcaptionbox{face\label{fig:face}}{\includegraphics[width=0.3\textwidth]{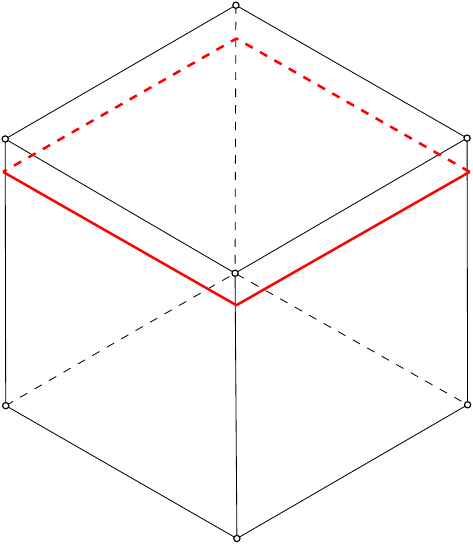}}
    \hfill
    \subcaptionbox{diagonal\label{fig:diagonal}}{\includegraphics[width=0.3\textwidth]{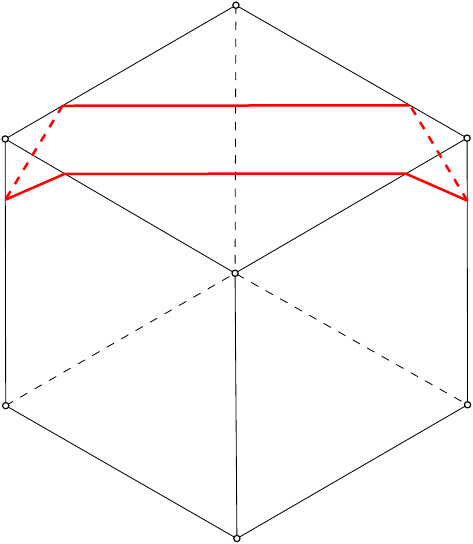}}
    \hfill
    \subcaptionbox{staple\label{fig:staple}}{\includegraphics[width=0.3\textwidth]{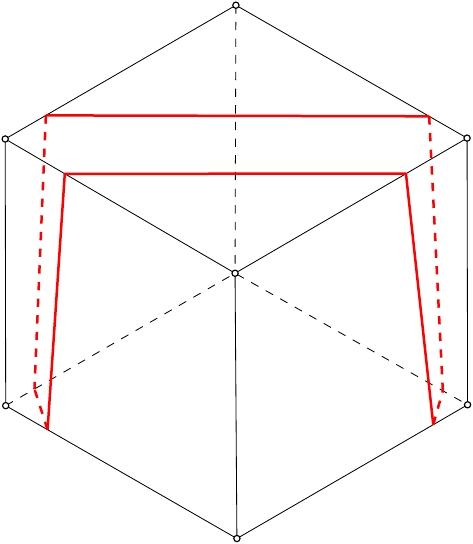}}

    \vspace{.3cm}

    \subcaptionbox{hexagon\label{fig:hexagon}}{\includegraphics[width=0.3\textwidth]{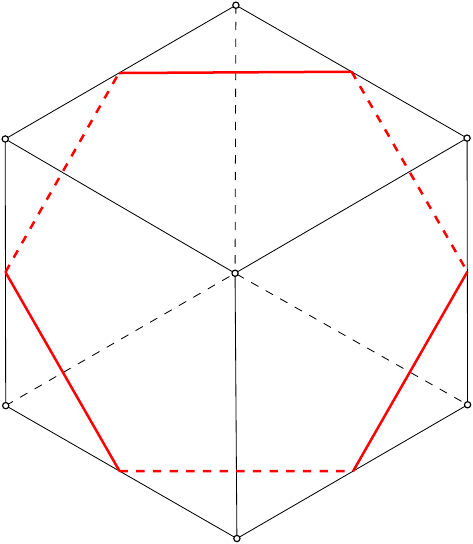}}
    \hfill
    \subcaptionbox{tripod\label{fig:tripod}}{\includegraphics[width=0.3\textwidth]{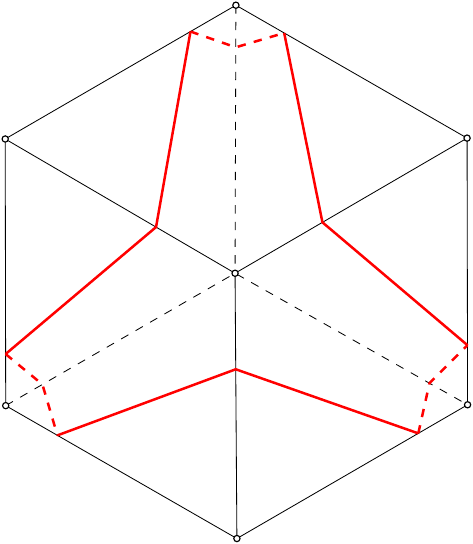}}
    \hfill
    \subcaptionbox{edge\label{fig:edge}}{\includegraphics[width=0.3\textwidth]{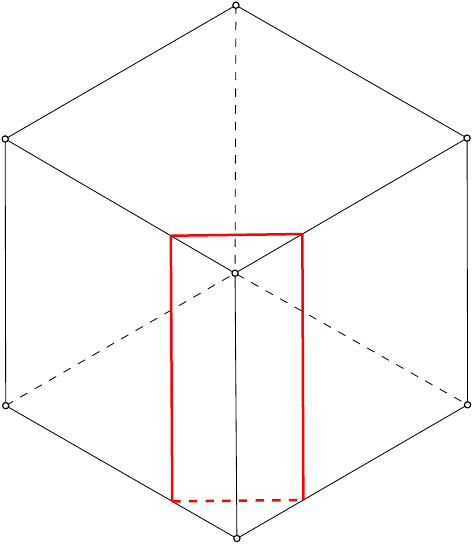}}
    
    \vspace{.3cm}
    
    \subcaptionbox{triangle\label{fig:triangle}}{\includegraphics[width=0.3\textwidth]{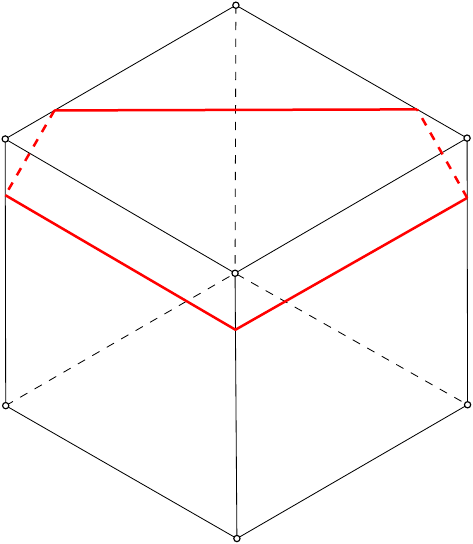}}
    \hfill
    \subcaptionbox{double-edge\label{fig:double-edge}}{\includegraphics[width=0.3\textwidth]{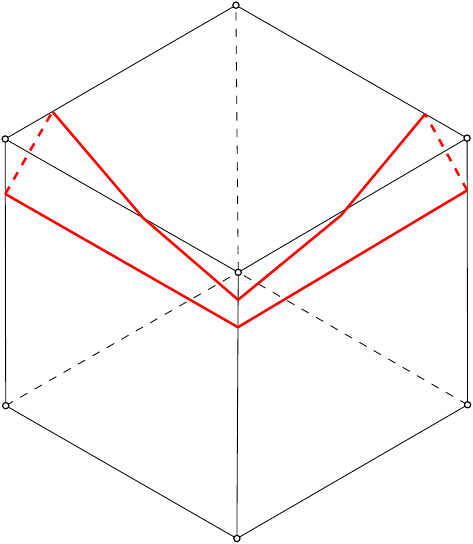}}
    \hfill
    \subcaptionbox{diamond\label{fig:diamond}}{\includegraphics[width=0.3\textwidth]{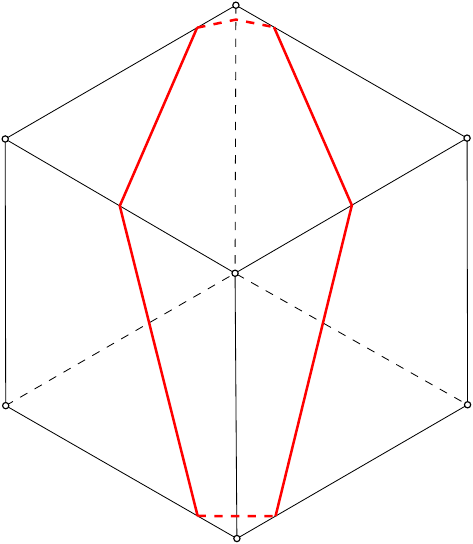}}
    
    \caption{A zoo of essential simple closed curves on the punctured cube.}
    \label{fig:three graphs}
\end{figure}

As stated in \thmref{thm:main}, the curves with the smallest extremal length are the simple closed curves that surround edges of the cube. These curves admit a dihedral symmetry of order $4$, but the quotient by this symmetry yields an arc on a disk with four marked points on the boundary and one in the interior. The deformation space of such surfaces is $3$-dimensional and so is the space of integrable holomorphic quadratic differentials that are real along the boundary on any such punctured quadrilateral. To compute the required extremal length in \secref{sec:edge}, we extract a $1$-parameter family of Jenkins--Strebel differentials with at most two rectangles that contains the one we are looking for, and figure out a way to detect when one of the rectangles is absent. This characterizes the desired quadratic differential and yields the formula given in \thmref{thm:main}. Most of these arguments apply to any proper arc on a once-punctured polygon, hence could be used in future work. In \secref{sec:edge}, the results are phrased in terms of the six-times-punctured sphere obtained by doubling the punctured quadrilateral across its boundary, but the two formulations are equivalent.

For most curves besides the above ones, we can use the Euclidean metric on the cube to get good enough lower bounds on their extremal length. This requires estimating lengths of geodesic trajectories on the cube, a calculation which is simplified by the following result.

\begin{repthm}{thm:rotation}
For any vertex-to-vertex geodesic trajectory on the surface of the regular tetrahedron, cube, octahedron, or icosahedron, there is an order two rotation of the polyhedron that sends the trajectory to itself. In particular, there does not exist a geodesic trajectory from a vertex to itself on these Platonic solids.
 \end{repthm}

 The second part of this theorem was first proved in \cite{Davis} for the tetrahedron and the cube, and in \cite{Fuchs} for the octahedron and the icosahedron. A unified proof was given in \cite[Theorem 1.3]{AthreyaAulicinoHooper} and then in \cite{Troubetzkoy}. The first part (although not stated explicitly) was also proved in \cite{Troubetzkoy}. We give a slightly shorter proof in \secref{sec:trajectories}. Note that on the regular dodecahedron, there does exist a geodesic trajectory from a vertex to itself \cite{AthreyaAulicino} (see also \cite{AthreyaAulicinoHooper}). The difference stems from the fact that equilateral triangles and squares tile the plane while regular pentagons do not.

\thmref{thm:rotation} and some arguments in its proof allow us to compute the bottom of the simple length spectrum of the flat metric on the punctured cube.

\begin{repprop}{prop:simple_length_spectrum}
The four smallest entries in the simple length spectrum of the unit cube punctured at its vertices are $2$, $2\sqrt{2}$, $2 + \sqrt{2}$, $4$, and $3\sqrt{2}$. These are realized only by the edge curves, the diagonal curves, the triangle curves, the face curves or double-edge curves, and the hexagon curves, respectively.
\end{repprop}

We then use this to get lower bounds on extremal lengths. Unfortunately, the resulting bound is not good enough to rule out triangle curves (\figref{fig:triangle}) and double-edge curves (\figref{fig:double-edge}) as extremal length systoles. We thus need finer estimates, which we obtain using Minsky's inequality in both cases together with lower bounds on the extremal length of other curves that intersect them $4$ times. For triangle curves, we need a lower bound on the extremal length of the diamond curve pictured in \figref{fig:diamond}. We obtain such a lower bound by embedding a cylinder in the homotopy class of a diamond curve through explicit conformal maps. 

\thmref{thm:main} is then obtained in \secref{sec:last} by combining the various lower bounds on extremal lengths of curves together with our numerical upper bound for the edge curves.

\begin{acknowledgements}
Samuel Dobchies was supported by two Undergraduate Student Research Awards (in 2022 and 2023) from the Natural Sciences and Engineering Re\-sear\-ch Council of Canada (NSERC). Maxime Fortier Bourque was supported by Discovery Grant RGPIN2022-03649 from NSERC. We thank the referees for their careful reading and useful comments.
\end{acknowledgements}

\section{Background} \label{sec:background}

In this section, we recall the definition and some properties of extremal length. We refer the reader to \cite{BolzaEL} and \cite{AhlforsConf} for more details and further references.

\subsection*{Extremal length}

Any Riemann surface $Y$ comes equipped with a conformal class $\conf(Y)$ of semi-Riemannian metrics, namely, those that push-forward to non-negative multiples (by a Borel-measurable function) of the Euclidean metric under conformal charts from $Y$ to the plane. By convention, we also require the metrics in $\conf(Y)$ to have finite positive area.

Given a closed curve $\alpha$ in $Y$, its \emph{extremal length} is defined as
\[
\el(\alpha, Y) := \sup_{h \in \conf(Y)} \frac{\ell([\alpha],h)^2}{\area(Y,h)},
\]
where $[\alpha]$ is the homotopy class of $\alpha$ and for a set $c$ of curves,
\[
\ell(c,h):= \inf \{ \length(\gamma,h) : \gamma \in c\}
\]
is the infimum of the lengths of curves in $c$ with respect to $h$. It is obvious from the definition that extremal length is invariant under conformal or anti-conformal diffeomorphisms.

\subsection*{Extremal length systole} It is well-known that $\el(\alpha, Y)=0$ if and only if $\alpha$ is \emph{inessential}, meaning that it can be homotoped into an arbitrarily small neighborhood of a point or a puncture in $Y$. A closed curve that is not inessential is called \emph{essential}. The \emph{extremal length systole} of $Y$ is then defined as the infimum of $\el(\alpha, Y)$ over all essential closed curves $\alpha$ in $Y$. It was shown in \cite[Theorem 3.2]{BolzaEL} that if $Y$ has a finitely generated fundamental group and is different from the thrice-punctured sphere, then the infimum is only realized by \emph{simple} (i.e., embedded) closed curves.

\subsection*{Quadratic differentials}

A \emph{quadratic differential} on a Riemann surface $Y$ is a map $q: TY \to \CC$ such that $q(\lambda v) = \lambda^2 q(v)$ for every $\lambda \in \CC$ and every $v \in TY$. In any chart $\phi$, we can write $q = f_\phi(z)dz^2$ for some function $f_\phi$. A quadratic differential is \emph{holomorphic} (resp. \emph{meromorphic}) if the local functions $f_{\phi}$ are holomorphic (resp. meromorphic). It is \emph{integrable} if $\int_Y |q| < \infty$. It is known that a meromorphic quadratic differential on a closed Riemann surface is integrable if and only if all its poles are simple \cite[p.24]{Strebel}.

A \emph{trajectory} of a meromorphic quadratic differential $q$ is a maximal smooth curve $\gamma$ (closed or not) in $Y$ such that the argument of $q(\gamma'(t))$ is constant. A standard uniqueness result for ordinary differential equations implies that trajectories are always simple. A trajectory is \emph{horizontal} if $q(\gamma'(t))>0$ for all $t$. A trajectory is \emph{critical} if it tends to a zero, a pole, or a puncture in at least one direction, and is \emph{regular} otherwise.

From a zero of order $k\geq -1$ (interpreted as a simple pole if $k=-1$) of a meromorphic quadratic differential emanate $k+2$ half-trajectories \cite[Section 7.1]{Strebel}, called \emph{prongs}, some pairs of which may belong to the same trajectory.

The \emph{Euler--Poincaré formula} \cite[Proposition 5.1]{FLP} states that a meromorphic quadratic differential on a closed Riemann surface $Y$ of genus $g$ has $-2\chi(Y)=4g-4$ zeros counting multiplicity, where a pole is counted as a zero of multiplicity $-1$. There is also a version for surfaces with boundary that can be obtained via doubling. The order of a zero on the boundary is counted with weight $1/2$ in that version.

\subsection*{The theorem of Jenkins} 
 
For an essential simple closed curve $\alpha$ on a Riemann surface $Y$ with a finitely generated fundamental group, a theorem of Jenkins \cite{Jenkins} states that there exists an integrable holomorphic quadratic differential $q_\alpha$ on $Y$, unique up to positive scaling, whose regular ho\-ri\-zon\-tal trajectories are all homotopic to $\alpha$. If $Y$ has a non-empty ideal boundary, then $q_\alpha$ should extend to be analytic and non-negative along it.

Geometrically, this means that $Y$ can be obtained from a Euclidean cylinder by partitioning its ends into subintervals and gluing a subset of them in pairs in such a way that the curves going around the cylinder (the \emph{core} curves) become homotopic to $\alpha$ in $Y$. More generally, a quadratic differential is \emph{Jenkins--Strebel} if the complement of its critical horizontal trajectories is a union of cylinders foliated by closed horizontal trajectories.

The second part of Jenkins's theorem states that the supremum in the de\-fi\-nition of $\el(\alpha, Y)$ is attained if and only if the metric $\rho$ is equal almost everywhere to a constant multiple of the conformal metric with area form $|q_\alpha|$.

Finally, the third part of Jenkins's theorem asserts that \[\el(\alpha, Y)= \inf \el(A)\] where the infimum is over all embedded annuli $A \subseteq Y$ whose core curves are homotopic to $\alpha$ and $\el(A)$ denotes the extremal length of the set of core curves in $A$ (this is equal to the circumference divided by the height if $A$ is represented as a Euclidean cylinder). If $Y$ is not a torus, then equality is achieved only if $A$ is equal to the complement of the critical horizontal trajectories of $q_\alpha$. If $Y$ is a torus, then equality is achieved only if $A$ is the complement of a (regular) horizontal trajectory of $q_\alpha$.

\subsection*{Behaviour under branched covers}

One situation which makes it easier to identify the quadratic differential $q_\alpha$ is if $Y$ and $\alpha$ are both invariant under some finite group of conformal automorphisms $G$. In that case, the quotient map $f: Y \to Y/G$ is holomorphic and $q_\alpha$ is the pull-back by $f$ of the quadratic differential $q_\beta$ corresponding to $\beta= f(\alpha)$ on $Y/G$ punctured at the critical values of $f$.

More generally, extremal length behaves as follows under holomorphic maps \cite[Lemma 4.1]{BolzaEL}.

\begin{lem}\label{lem:el_under_covers}
Let $f : Z \to W$ be a holomorphic map of degree $d$ between Riemann surfaces
with finitely generated fundamental groups, let $Q \subset W$ be a finite set containing the critical
values of $f$, and let $P \subseteq f^{-1}(Q)$ be such that $f^{-1}(Q) \setminus P$ is a subset of the critical points of $f$. Then 
\[
\el(f^{-1}(\gamma), Z \setminus P) = d \cdot \el(\gamma, W \setminus Q)
\]
for any simple closed curve $\gamma$ in $W \setminus Q$.
\end{lem}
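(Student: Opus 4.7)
The plan is to establish the claimed equality by proving the two opposite inequalities separately, each by transferring an admissible conformal metric between the two surfaces. The key geometric input is that the finitely many critical points of $f$ lying in $f^{-1}(Q) \sm P$ form a measure-zero set, so they contribute nothing to areas or lengths; the restriction of $f$ to $Z \sm f^{-1}(Q)$ may thus be treated as an honest unbranched holomorphic covering of degree $d$ onto $W \sm Q$ throughout.

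For the inequality $\el(f^{-1}(\gamma), Z \sm P) \leq d \cdot \el(\gamma, W \sm Q)$, I would start from an arbitrary conformal metric $\tilde h = \tilde\sigma(z)\,|dz|$ on $Z \sm P$ and build its averaged pushforward $h = \sigma(w)\,|dw|$ on $W \sm Q$ by
\[
\sigma(w) := \frac{1}{d} \sum_{z \in f^{-1}(w)} \frac{\tilde\sigma(z)}{|f'(z)|}.
\]
Cauchy--Schwarz applied pointwise in $w$ gives $\sigma(w)^2 \leq \frac{1}{d}\sum_{z} \tilde\sigma(z)^2/|f'(z)|^2$, and the change of variables $dA_W = |f'|^2\,dA_Z$ on each sheet integrates this into $\area(W \sm Q, h) \leq \frac{1}{d}\area(Z \sm P, \tilde h)$. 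For any $\alpha \sim \gamma$, unfolding $\int_\alpha \sigma\,|dw|$ along the $d$ lifts yields the exact identity $\length(\alpha, h) = \frac{1}{d}\length(f^{-1}(\alpha), \tilde h)$. Since preimages of representatives of $[\gamma]$ form a subfamily of multi-curves in the homotopy class of $f^{-1}(\gamma)$, taking infima and then the supremum over $\tilde h$ yields the inequality.

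For the reverse inequality, I would instead pull a conformal metric $h$ on $W \sm Q$ back to $\tilde h := f^{*} h$ on $Z \sm P$, where the same change of variables now gives $\area(Z \sm P, \tilde h) = d \cdot \area(W \sm Q, h)$. For any multi-curve $\tilde B$ homotopic to $f^{-1}(\gamma)$, each of its components $\tilde\beta_i$ projects to a curve $f(\tilde\beta_i)$ freely homotopic to some power $\gamma^{d_i}$ of $\gamma$ in $W \sm Q$, where $d_i$ is the covering degree of the corresponding component of $f^{-1}(\gamma)$ over $\gamma$. Summing then gives $\length(\tilde B, \tilde h) = \length(f(\tilde B), h) \geq \sum_i d_i\, \ell([\gamma], h) = d \cdot \ell([\gamma], h)$, after which dividing by the area and taking the supremum over $h$ finishes this direction.

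The main technical obstacle is the intermediate inequality $\ell([\gamma^k], h) \geq k \cdot \ell([\gamma], h)$ used in summing over components of $f(\tilde B)$: it is not immediate for a general conformal metric $h$. I would verify it by passing to the annular cover $\pi\colon \tilde W \to W \sm Q$ associated with the cyclic subgroup $\langle \gamma \rangle$, where it reduces to the assertion that in an annulus equipped with a conformal metric, a closed curve of winding number $k$ around the core has length at least $k$ times the infimum length of a once-winding curve, which can be handled by a coarea argument on the angular coordinate.
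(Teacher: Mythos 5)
Your first inequality (the averaged pushforward) is correct and is the standard argument. The second inequality, however, has a genuine gap, and this is precisely the place where the paper's cited proof (Lemma~4.1 of the Bolza paper) takes a different route through Jenkins--Strebel quadratic differentials---indeed, the remark following the lemma in the present paper explicitly records the by-product that $f^*q_\gamma$ is the extremal differential for $f^{-1}(\gamma)$, which is what that proof actually establishes.

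The gap is in your reduction to the claim that $\ell([\gamma^k],h)\geq k\,\ell([\gamma],h)$ and in the proposed verification. The coarea argument on the angular coordinate does not close: it shows that a $k$-winding curve $c$ satisfies $\length(c,h)\geq k\int_0^{2\pi}\bigl(\inf_r \sigma(re^{i\theta})\,r\bigr)\,d\theta$, but the same coarea bound applied to a once-winding competitor shows only $\ell([\gamma],h)\geq\int_0^{2\pi}\bigl(\inf_r\sigma\, r\bigr)\,d\theta$, and the latter inequality typically is strict, so you cannot conclude $\length(c,h)\geq k\,\ell([\gamma],h)$. The statement is true, but the correct elementary proof is different: smooth $c$ to have transverse self-intersections, resolve each self-intersection in the orientation-compatible way to obtain finitely many disjoint simple closed curves $c_1,\dots,c_n$ with $\sum\length(c_i)=\length(c)$ and $\sum w(c_i)=k$; since each $w(c_i)\in\{-1,0,1\}$, at least $k$ of the $c_i$ are essential and each of these has length $\geq\ell([\gamma],h)$. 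Separately, you assert that $f(\tilde\beta_i)$ is freely homotopic to $\gamma^{d_i}$ \emph{in $W\setminus Q$}, but the homotopy from $\tilde\beta_i$ to the corresponding component of $f^{-1}(\gamma)$ lives only in $Z\setminus P$, which may be strictly larger than $Z\setminus f^{-1}(Q)$: when some $q\in Q$ has $f^{-1}(q)\subseteq f^{-1}(Q)\setminus P$, such a homotopy is allowed to cross $f^{-1}(q)$, and its image only gives a homotopy in $W\setminus f(P)\supsetneq W\setminus Q$. Your argument therefore controls $\ell$ with respect to the wrong surface. The quadratic-differential proof bypasses both issues: one applies Jenkins's theorem on $W\setminus Q$, checks (using exactly the hypothesis that $f^{-1}(Q)\setminus P$ consists of critical points) that $f^*q_\gamma$ is holomorphic and integrable on $Z\setminus P$ with all regular horizontal trajectories homotopic to components of $f^{-1}(\gamma)$, and reads off circumference and area directly, with the length-minimizing property of closed trajectories doing the work that your two claims were meant to do.
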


Although the statement does not mention quadratic differentials, the proof shows that the quadratic differential for $f^{-1}(\gamma)$ on $Z \setminus P$ is the pull-back of the one for $\gamma$ on $W\setminus Q$. Thus, if we know explicit single-cylinder quadratic differentials on the base surface $W\setminus Q$, we can lift them to more complicated surfaces. In practice, we instead start with a pair $(\alpha, Y)$ and look for a map $f$ such that $(\alpha, Y)=(f^{-1}(\gamma), Z \setminus P)$.

In the present paper, we will apply this lemma with $Z=W=\CHAT$ (the Riemann sphere), $f$ a rational map, $P$ some set of punctures such that $Y=\CHAT\setminus P$, and $Q$ equal to the union of $f(P)$ and the set $V$ of critical values of $f$. In our examples, we will have that $P=f^{-1}(f(P))$ and that $f^{-1}(V)$ is the set of critical points of $f$, so that the condition that $f^{-1}(Q) \setminus P$ is a subset of the critical points will be satisfied.

\subsection*{Elliptic integrals}

In the case that $Y$ is the sphere punctured at four points that lie on a circle and $\alpha$ is invariant under inversion in that circle, then we can compute $\el(\alpha,Y)$ explicitly in terms of certain elliptic integrals, as we will now explain. 

For $k\in (0,1)$, the \emph{complete elliptic integral of the first kind} at \emph{modulus} $k$ is given by
\[
K(k):= \int_0^1 \frac{dt}{\sqrt{(1-t^2)(1-k^2t^2)}}.
\]
The \emph{complementary modulus} is $k':= \sqrt{1-k^2}$ and the \emph{complementary integral} is
\[
K'(k):=K(k')= \int_1^{1/k}\frac{dt}{\sqrt{(1-t^2)(1-k^2t^2)}},
\]
where the second equality can be proved by a change of variable (see \cite[Lemma 5.2]{BolzaEL}). We will also write
\[
(K/K')(k):= K(k)/K'(k) = K(k)/K(k') = K(k)/K(\sqrt{1-k^2})
\]
as these ratios come up often.

The  following formula is proved implicitly in \cite[Theorem A.1]{BolzaEL}.

\begin{lem} \label{lem:formule4pt}
Let $Y=\CC\setminus\{0, 1, t\}$ for some $t > 1$ and let $\alpha$ be a simple closed curve in $Y$ separating $(0,1)$ from $t$. Then $\el(\alpha, Y) = 2 (K/ K')(1/\sqrt{t})$.
\end{lem}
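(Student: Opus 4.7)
The plan is to pull the problem back to the elliptic curve $E$ defined by $y^2=x(x-1)(x-t)$ via the degree-two hyperelliptic projection $f\colon E\to \CHAT$, $f(x,y)=x$, which is branched precisely over the four punctures $\{0,1,t,\infty\}$ of $Y$. Since $\alpha$ encircles two branch points, its monodromy into $\ZZ/2\ZZ$ is trivial, so $f^{-1}(\alpha)$ is a disjoint union of two simple closed curves $\alpha_1,\alpha_2$ on the torus $E$, each mapped homeomorphically onto $\alpha$ by $f$.

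The next step is to identify the Jenkins--Strebel differential on $Y$. The space of integrable holomorphic quadratic differentials on the four-times-punctured sphere is one-dimensional, spanned by $q=dz^2/(z(z-1)(z-t))$, since any candidate must have at worst simple poles at the four punctures. Inspecting the sign of the coefficient on the components of $\RR\setminus\{0,1,t\}$ shows that the critical horizontal trajectories of $q$ are the intervals $[0,1]$ and $[t,\infty]$, and their complement is a single annulus with core homotopic to $\alpha$; thus by uniqueness in Jenkins's theorem, $q_\alpha$ is a positive multiple of $q$. Using $d\zeta=dx/y$ for the uniformizing holomorphic differential on $E=\CC/\Lambda$, the pullback computes to
\[
f^*q \;=\; \frac{dx^2}{x(x-1)(x-t)} \;=\; \frac{dx^2}{y^2} \;=\; d\zeta^2,
\]
a nonvanishing constant multiple of the square of $d\zeta$; the simple poles of $q$ at the branch values are exactly cancelled by the ramification of $f$, as a local check $f\colon w\mapsto w^2$ near each branch point confirms.

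Writing $\Lambda=\omega_1\ZZ+\omega_2\ZZ$ with $\omega_1$ real and $\omega_2$ purely imaginary (forced by the real structure of the Weierstrass model), the horizontal trajectories of $f^*q$ are horizontal Euclidean lines, so $\alpha_1$ and $\alpha_2$ are closed horizontal trajectories of length $|\omega_1|$ that cut $E$ into two isometric flat annuli of height $|\omega_2|/2$. Each of these annuli maps isometrically under $f$ onto the Strebel cylinder of $q_\alpha$ on $Y$, whose circumference-to-height ratio is therefore $2|\omega_1|/|\omega_2|$. By the last part of Jenkins's theorem,
\[
\el(\alpha,Y) \;=\; \frac{2|\omega_1|}{|\omega_2|}.
\]
It then remains to evaluate the periods as elliptic integrals. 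The substitution $x=\sin^2\theta$ yields $|\omega_1|=2\int_0^1 dx/\sqrt{x(1-x)(t-x)}=(4/\sqrt{t})\,K(1/\sqrt{t})$, and the substitution $x=t/v^2$ combined with the second integral representation of $K'$ given just before the statement yields $|\omega_2|=2\int_1^t dx/\sqrt{x(x-1)(t-x)}=(4/\sqrt{t})\,K'(1/\sqrt{t})$. Substituting and simplifying gives $\el(\alpha,Y)=2K(k)/K'(k)$ with $k=1/\sqrt{t}$, as claimed. The main delicate step is the pole cancellation upon pullback, which is what turns $f^*q_\alpha$ into a globally nonvanishing holomorphic quadratic differential on the closed torus and lets us read off the cylinder modulus directly from the lattice periods.
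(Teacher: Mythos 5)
Your proof is correct, and it takes a genuinely different route from the paper's. The paper's proof is short because it outsources the elliptic integral identity to the cited Lemma~5.2 of \cite{BolzaEL} (the symmetric four-punctured sphere $\CHAT\setminus\{\pm1,\pm1/k\}$, giving $4(K/K')(k)$) and then transfers the answer to $Y=\CC\setminus\{0,1,t\}$ via the degree-two rational map $z\mapsto z^2$ and \lemref{lem:el_under_covers}. You instead argue self-containedly: you identify $q_\alpha=\dfrac{dz^2}{z(z-1)(z-t)}$ directly by dimension count and sign inspection, pull it back along the hyperelliptic double cover $E:y^2=x(x-1)(x-t)\to\CHAT$ to obtain the flat differential $d\zeta^2$ on the torus $\CC/\Lambda$ (the ramification exactly killing the simple poles), and read the circumference and height of the Strebel cylinder off the rectangular lattice periods $\omega_1,\omega_2$, which you then convert into $K$ and $K'$ by the substitutions $x=\sin^2\theta$ and $x=t/v^2$. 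The paper's route buys brevity and modularity; yours buys self-containedness and a transparent flat geometric picture (the cylinder is literally half the flat torus). One small place to tighten: the preimages $\alpha_1,\alpha_2$ of the given curve $\alpha$ need not themselves be horizontal geodesics, so rather than saying they cut $E$ into two isometric annuli, one should argue that the preimage of the Strebel cylinder of $q_\alpha$ is a pair of flat cylinders exchanged by the hyperelliptic involution, forcing each to have height $|\omega_2|/2$; the conclusion is unchanged.
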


\begin{proof}
 Let $k=1/\sqrt{t} \in (0,1)$. By Lemma 5.2 in \cite{BolzaEL}, the extremal length of the curve $\beta$ in $X = \CHAT \setminus \{ \pm 1, \pm 1/k \}$ separating the interval $(-1,1)$ from $\pm 1/k$ is equal to $4(K/K')(k)$. 

The degree $2$ holomorphic map $f(z)=z^2$ sends $\pm1$ to $1$ and $\pm 1/k$ to $1/k^2=t$, and has critical points $0$ and $\infty$, which are fixed. By \lemref{lem:el_under_covers}, we have
\[
\el(\alpha,Y) = \frac12 \el(\beta,X) = 2 (K/K')(k)
\]
since $f^{-1}(\alpha)$ is homotopic to $\beta$.
\end{proof}

\begin{rem}
Observe that $t$ is equal to the cross-ratio of $(0,1,t,\infty)$. Hence, for any $4$-tuple of points that lie on a circle in $\CHAT$, the above lemma gives a formula for the extremal length of a symmetric curve separating them into two pairs, in terms of their cross-ratio (calculated in the correct order). 
\end{rem}

Note that \cite[Lemma 5.2]{BolzaEL} cited above is proved by showing that
\[
q_\beta = \frac{dz^2}{(1-z^2)(1-k^2z^2)}
\]
and then expressing the circumference and area of the horizontal cylinder of $q_\beta$ in terms of elliptic integrals. One can also show directly  that
\[
q_\alpha = \frac{dz^2}{z(1-z)(t-z)}
\]
(see below), but expressing the extremal length in terms of the complete elliptic integral $K$ makes things more convenient for numerical calculations. Indeed, there are algorithms that compute $K$ using an iteration scheme rather than numerical integration and provide certified error bounds. This is based on the fact that
\[
K(k)= \frac{\pi}{2 M (1, k')}
\]
where $M(a,b)$ is the \emph{arithmetic-geometric mean} of $a$ and $b$ \cite[Theorem 1.1]{AGM}. The latter can be estimated precisely and rigorously using interval arithmetic. For instance, in the computer algebra system \texttt{SageMath} \cite{sagemath}, the command \texttt{CBF(b).agm1()} returns a true interval that contains $M(1,b)$. We use this to estimate various ratios of elliptic integrals in the next section.

We also record here the fact that the formula in \lemref{lem:formule4pt} still works if we add more punctures between $0$ and $1$ or between $t$ and $+\infty$.

\begin{lem} \label{lem:add_punctures}
Let $t>1$, let $Y=\CC\setminus\{0, 1, t\}$, let $\alpha$ be a simple closed curve in $Y$ separating $(0,1)$ from $t$, and let $Z = Y \setminus B$ where $B \subseteq (0,1)\cup (t,+\infty)$ is a finite union of closed intervals (possibly reduced to points). Then $\el(\alpha, Z)=\el(\alpha, Y)=2 (K/K')(1/\sqrt{t})$.
\end{lem}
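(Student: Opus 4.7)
The plan is to prove the two inequalities separately. For the easy direction $\el(\alpha, Z) \geq \el(\alpha, Y)$, I would invoke monotonicity: any conformal metric $h$ on $Y$ restricts to a conformal metric on $Z$ with the same area, because $B$ is a finite union of closed real intervals and thus has planar Lebesgue measure zero. Moreover, any curve in $Z$ homotopic to $\alpha$ in $Z$ is automatically homotopic to $\alpha$ in $Y$, so $\ell([\alpha]_Z, h|_Z) \geq \ell([\alpha]_Y, h)$. Taking the supremum over $h$ then gives $\el(\alpha, Z) \geq \el(\alpha, Y)$.

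For the reverse inequality, the idea is to exhibit a concrete embedded annulus in $Z$ of the right modulus and invoke the third part of Jenkins's theorem. The natural candidate is the horizontal cylinder of the quadratic differential
\[
q_\alpha = \frac{dz^2}{z(1-z)(t-z)}
\]
on $Y$, which the paragraph before the lemma identifies as the Jenkins--Strebel differential for $\alpha$ on $Y$. The key step is to locate its critical horizontal trajectories precisely. A change of variable $w = 1/z$ shows that $q_\alpha$ extends with a simple pole at $\infty$, so by Euler--Poincar\'e it has exactly four simple poles and no zeros, hence four prongs in total. A sign analysis of $z(1-z)(t-z)$ on each component of $\RR \setminus \{0, 1, t\}$ shows that $q_\alpha$ is positive real precisely on $(0,1)$ and on $(t, +\infty)$; since these two arcs already absorb all four prongs (connecting $0$ to $1$ and $t$ to $\infty$), the critical horizontal graph is exactly $[0,1] \cup \bigl([t, +\infty] \cup \{\infty\}\bigr)$.

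Because $B \subseteq (0,1) \cup (t, +\infty)$ lies entirely inside this critical graph, the horizontal cylinder $A = \CHAT \setminus \bigl([0,1] \cup [t, +\infty]\bigr)$ of $q_\alpha$ is disjoint from $B$ and therefore embeds unchanged into $Z$ as an annulus whose core curve is still homotopic to $\alpha$. The third part of Jenkins's theorem then gives
\[
\el(\alpha, Z) \leq \el(A) = \el(\alpha, Y),
\]
where the last equality is \lemref{lem:formule4pt}. Combining with the monotonicity bound yields $\el(\alpha, Z) = \el(\alpha, Y) = 2(K/K')(1/\sqrt{t})$.

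The main obstacle is the identification of the critical graph of $q_\alpha$, but once one commits to the sign analysis together with the Euler--Poincar\'e prong count, the geometric picture (two arcs joining the four poles in pairs, complement an annulus) falls out immediately, and the rest of the proof is formal.
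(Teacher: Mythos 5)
Your proof is correct, and the key observation---that $B$ lies inside the critical horizontal graph $[0,1]\cup[t,+\infty]$ of $q_\alpha$, so that removing it does not touch the characteristic cylinder---is the same one the paper relies on. You package it differently, though. The paper goes in one shot: it notes that the restriction of $q_\alpha$ to $Z$ is still an integrable Jenkins--Strebel differential for $\alpha$ on $Z$ (invoking the first part of Jenkins's theorem), and that the circumference and area are unchanged, so the extremal length is unchanged. You instead split into two inequalities: the easy monotonicity bound $\el(\alpha,Z)\geq\el(\alpha,Y)$ (restricting metrics along a measure-zero set), and the embedded-annulus bound $\el(\alpha,Z)\leq\el(A)=\el(\alpha,Y)$ via the cylinder characterization. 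Both routes work; yours has the mild advantage of needing only the cheap inequality $\el(\alpha,Z)\leq\el(A)$ together with the already-known value of $\el(A)$, rather than re-verifying on $Z$ the hypotheses under which $q_\alpha$ is extremal. One small misattribution: the equality $\el(A)=\el(\alpha,Y)$ is the equality case of the third part of Jenkins's theorem on $Y$ (since $A$ is the complement of the critical graph of $q_\alpha$), not Lemma~\ref{lem:formule4pt}; the latter supplies only the closed-form value $2(K/K')(1/\sqrt{t})$.
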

\begin{proof}
The quadratic differential realizing the extremal length of $\alpha$ on $Y$ is equal to 
\[
q_\alpha = \frac{dz^2}{z(1-z)(t-z)}
\]
because its critical horizontal trajectories are $[0,1]$ and $[t,+\infty]$ and their complement is a topological cylinder with core curves homotopic to $\alpha$. After removing the set $B$, the restriction of $q_\alpha$ to $Z$ is still an integrable holomorphic quadratic differential on $Z$ whose non-critical horizontal trajectories are all homotopic to $\alpha$. Thus, by the theorem of Jenkins,  $q_\alpha$ realizes the extremal length of $\alpha$ on $Z$. Since neither the length of the closed trajectories nor the area has changed, the extremal length of $\alpha$ is unchanged as well.
\end{proof}

\section{Explicit calculations} \label{sec:explicit}

In this section, we explicitly compute the extremal length of various essential simple closed curves on the cube punctured at its vertices in terms of elliptic integrals. Before we start, we explain how we can work on the Riemann sphere rather than the surface of the cube.

\begin{lem} \label{lem:radial}
If a cube is inscribed in a sphere, then there is a conformal map from the cube minus its vertices to the sphere minus these vertices.
\end{lem}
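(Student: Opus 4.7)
The plan is to first equip the cube with its intrinsic Riemann surface structure, apply the uniformization theorem to identify it conformally with $\CHAT$, and then use the rotation group of the cube to pin down the images of the vertices. The Euclidean metric on the cube is flat on each face with a cone singularity of angle $3\pi/2$ at each of the eight vertices, which defines a natural conformal structure on the cube minus the vertices. To extend this to a Riemann surface structure on the whole cube, near each vertex $V$ I would use the local coordinate $z = w^{4/3}$, where $w$ is a flat complex coordinate on the cone neighborhood of total angle $3\pi/2$; this unfolds the cone to a full disk and makes $V$ a smooth point. The transition maps between such a vertex chart and the Euclidean chart on each adjacent face are of the form $\zeta \mapsto \zeta^{4/3}$ for $\zeta \neq 0$, hence holomorphic, so one obtains a compact Riemann surface $\tilde{C}$ homeomorphic to $S^2$.

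By the uniformization theorem there is a biholomorphism $\psi : \tilde{C} \to \CHAT$, which restricts to a conformal isomorphism from the cube minus its vertices onto $\CHAT$ minus the eight image points $\psi(V_1), \ldots, \psi(V_8)$. What remains is to show that, after post-composing $\psi$ with a Möbius transformation if necessary, these image points may be taken to be the vertices of the originally inscribed cube on the sphere.

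For this, I would invoke the rotation group $G$ of the cube, which has order $24$ and is isomorphic to $S_4$. Its elements are isometries of the flat metric, so they act on $\tilde{C}$ by conformal automorphisms, which under $\psi$ become Möbius transformations; this yields a faithful embedding of $S_4$ into $\mathrm{PSL}(2,\CC)$. By the classical classification of finite subgroups of $\mathrm{PSL}(2,\CC)$, every such embedding is conjugate to the standard octahedral subgroup that preserves a regular inscribed cube on the Riemann sphere, and this action has a unique orbit of size $8$, namely the vertex set of that cube. Since the $V_i$ form a single $G$-orbit on $\tilde{C}$, their images under $\psi$ form an $8$-point orbit of the conjugated action, hence the vertex set of some inscribed cube on $\CHAT$. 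Finally, since any two regular inscribed cubes on the sphere differ by an element of $\mathrm{SO}(3) \subset \mathrm{PSL}(2,\CC)$, one further post-composition of $\psi$ with a Möbius transformation carries these eight images onto the vertices of the given inscribed cube. The main subtlety is this symmetry bookkeeping, which reduces to standard facts about finite subgroups of $\mathrm{PSL}(2,\CC)$ and their orbit structure.
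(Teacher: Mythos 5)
Your argument is correct, but it takes a genuinely different route from the paper's.

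The paper proves the lemma by tiling the surface of the cube into $48$ isoceles right triangles on which the full isometry group acts simply transitively, applying the Riemann mapping theorem to carry one such triangle conformally onto its radial projection on the sphere (respecting the vertex markings), and then extending this map equivariantly across the whole cube. This is an explicit, hands-on construction — it produces the conformal map directly, and a one-line modification of the same argument gives the result for any Platonic solid.

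You instead treat the cube abstractly as a compact Riemann surface of genus zero (using the cone-angle chart $z = w^{4/3}$ at the vertices, which is the right exponent for cone angle $3\pi/2$), uniformize it to $\CHAT$, and then locate the images of the vertices by a symmetry argument: the rotation group $G \cong S_4$ pushes forward to a finite subgroup of $\mathrm{PSL}(2,\CC)$, which by the classical classification is conjugate to the standard octahedral group, and that group has a unique orbit of size $8$ on $\CHAT$, namely the vertex set of a regular inscribed cube. A final rotation carries that cube to the given one. This is all correct — in particular, the orbit sizes for the octahedral rotation group really are $8$, $6$, and $12$ for vertices, face centers, and edge midpoints, so there is indeed a unique orbit of size $8$. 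The trade-off is that your approach replaces the Riemann mapping theorem for a triangle plus Schwarz reflection gluing with the uniformization theorem plus the classification of finite Möbius groups; it is slicker but less constructive. One small caveat worth flagging: the uniqueness-of-orbit step is specific to the cube (and to the octahedron, icosahedron, dodecahedron); for the tetrahedron the rotation group has two orbits of size $4$ (vertices and face centers), so your argument would need an extra step there, whereas the paper's fundamental-domain argument covers all five Platonic solids uniformly.
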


\begin{proof}
The isometry group $G$ of the cube acts simply transitively on a tiling $\calT$ of its faces by isosceles right triangles (see \figref{fig:tiling}), as well as on the tiling obtained by projecting $\calT$ radially onto the circumscribed sphere. By the Riemann mapping theorem and the fact that the conformal automorphisms of the disk act transitively on triples of points appearing in counterclockwise order along the boundary, there exists a conformal homeomorphism between any one of these triangles on the cube and its radial projection on the sphere that sends vertices to vertices. We can then extend this conformal map from the cube to the sphere equivariantly with respect to $G$. This conformal map fixes the vertices of the cube, as these are isolated fixed points of some subgroups of $G$.
\end{proof}

\begin{figure}
\centering
\includegraphics[width=0.3\textwidth]{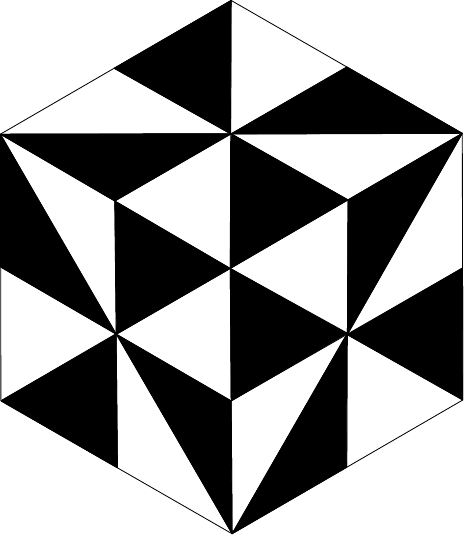}
\caption{A tiling of the surface of the cube on which its isometry group acts simply transitively.}
\label{fig:tiling}
\end{figure}

\begin{rem}
The same argument applies to any of the five Platonic solids.
\end{rem}

For various curves on the cube that are invariant under a rotation, our goal is to quotient out by that rotation to simplify the surface and then use Lemmas \ref{lem:el_under_covers}, \ref{lem:formule4pt}, and \ref{lem:add_punctures} to obtain a formula for the extremal length in terms of elliptic integrals. To find the quotient map $f$, we first need to compute the location of the eight vertices of the cube on the Riemann sphere and then apply a M\"obius transformation in order to put the fixed points of the rotation at $0$ and $\infty$. 

We will use the following terminology throughout the paper.

\begin{defn}
A simple closed curve $\gamma$ in a surface \emph{surrounds} a contractible set $E$ if for any open neighborhood $V$ of $E$, $\gamma$ can be homotoped into  $V \setminus E$. 
\end{defn}

\subsection{The face curves}

Let $X$ be the cube punctured at its vertices. We define a \emph{face curve} on $X$ to be a curve that surrounds a face (see \figref{fig:face}). Since any two face curves are related by an homotopy and an isometry of the cube, their extremal lengths on $X$ are equal. To compute this extremal length, we first need to compute the location of the vertices of the cube after stereographic projection.

\begin{lem} \label{lem:plus_sign}
The cube minus its vertices is conformally equivalent to \[\CHAT \setminus \{ \pm a, \pm i a, \pm 1/a, \pm i/a \},\] where $a = (\sqrt{3}+1)/\sqrt{2}$.
\end{lem}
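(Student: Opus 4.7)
The plan is to combine \lemref{lem:radial} with an explicit stereographic projection, using a convenient inscription of the cube. By \lemref{lem:radial}, if we inscribe the cube in a sphere $\Sigma$, then the cube minus its vertices is conformally equivalent to $\Sigma$ minus the set $V$ of vertices. Since stereographic projection is itself conformal (away from its center), it identifies $\Sigma \setminus V$ with $\CHAT$ minus the image of $V$. So the task reduces to choosing a placement of the cube and computing these images.

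The natural choice is to inscribe the cube in the unit sphere so that its vertices are at $(\pm 1, \pm 1, \pm 1)/\sqrt{3}$, and to stereographically project from the north pole via $(x,y,z) \mapsto (x+iy)/(1-z)$. The four top vertices (with $z = 1/\sqrt{3}$) all have modulus $\sqrt{2}/(\sqrt{3}-1) = (\sqrt{3}+1)/\sqrt{2} = a$, and their arguments are $\pm \pi/4, \pm 3\pi/4$; the four bottom vertices (with $z = -1/\sqrt{3}$) have modulus $(\sqrt{3}-1)/\sqrt{2} = 1/a$ and the same arguments. Hence $\sigma(V)$ is exactly
\[
e^{i\pi/4}\bigl\{\pm a,\ \pm ia,\ \pm 1/a,\ \pm i/a \bigr\}.
\]

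Finally, post-composing with the rotation $z \mapsto e^{-i\pi/4}z$, which is a M\"obius transformation and hence a conformal automorphism of $\CHAT$, identifies this set with $\{\pm a, \pm ia, \pm 1/a, \pm i/a\}$. The whole composition (the map from \lemref{lem:radial}, stereographic projection, and the rotation) is a conformal equivalence from the cube minus its vertices to $\CHAT \setminus \{\pm a, \pm ia, \pm 1/a, \pm i/a\}$, as required.

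There is no real obstacle here: the only points requiring care are the verification that $(\sqrt{3}+1)/\sqrt{2}$ and $(\sqrt{3}-1)/\sqrt{2}$ are reciprocals (which follows from $(\sqrt{3}+1)(\sqrt{3}-1) = 2$), and the remark that the conformal equivalence in \lemref{lem:radial} need not coincide with radial projection, but this does not matter since both maps send vertices to vertices and only the resulting configuration on $\CHAT$, taken up to M\"obius transformation, is relevant.
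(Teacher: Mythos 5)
Your proof is correct and takes essentially the same approach as the paper's: invoke \lemref{lem:radial}, pick a convenient inscription, and compute the stereographic images of the vertices. The only cosmetic difference is that the paper pre-rotates the cube by $\pi/4$ about the $z$-axis so the images land directly at $\{\pm a, \pm ia, \pm 1/a, \pm i/a\}$, whereas you keep the axis-aligned cube and post-compose with the rotation $z \mapsto e^{-i\pi/4}z$ of $\CHAT$; these are the same argument.
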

\begin{proof}
Consider a cube inscribed in the unit sphere in $\RR^3$ in such a way that the top and bottom faces are parallel to the $xy$-plane while the vertices all lie in the other two coordinate planes (which cut the cube diagonally). By \lemref{lem:radial}, the cube minus its vertices is conformally equivalent to the sphere minus the same vertices. It only remains to compute the images of the vertices by the stereographic projection from the unit sphere to $\CHAT$.

Recall that the stereographic projection is given by
\[
S(x,y,z) = \frac{x}{1-z} + i\frac{y}{1-z}.
\]
One vertex of the cube is at $(\sqrt{2}/\sqrt{3},0,1/\sqrt{3})$ and we compute \[
S(\sqrt{2}/\sqrt{3},0,1/\sqrt{3})=\frac{\sqrt{2}}{\sqrt{3}-1} =\frac{\sqrt{3}+1}{\sqrt{2}}=a.\]
The other vertices can be obtained from this one by applying the rotation of angle $\pi/2$ around the $z$-axis, the reflection in the $xy$-plane, and their compositions. Under $S$, these transformations correspond to the rotation of angle $\pi/2$ around the origin and the inversion in the unit circle, yielding the points $\{ \pm a, \pm i a, \pm 1/a, \pm i/a \}$.
\end{proof}

We can now compute the extremal length of the face curves on $X$.

\begin{prop} \label{prop:face}
The extremal length of any face curve on the cube minus its vertices is equal to
\[
8\, (K / K')(1/a^4) \in [3.12680384539222 \pm 8.07\times 10^{-15}],
\]
where $a = (\sqrt{3}+1)/\sqrt{2}$.
\end{prop}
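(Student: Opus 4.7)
The strategy is to exploit the order-$4$ rotational symmetry of a face curve around the axis joining the centers of two opposite faces, reducing the problem to a $4$-times-punctured sphere where \lemref{lem:formule4pt} applies.

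First I would use \lemref{lem:radial} (specifically its refinement \lemref{lem:plus_sign}) to identify the punctured cube $X$ with $Y=\CHAT\setminus P$, where $P=\{\pm a,\pm i a,\pm 1/a,\pm i/a\}$ and $a=(\sqrt{3}+1)/\sqrt{2}$. Any face curve is, up to an orientation-preserving isometry of the cube (which acts by a Möbius transformation on $\CHAT$), homotopic to the one surrounding the ``top'' face, and the latter is represented by any round circle $|z|=r$ with $1<r<a$. Call this curve $\alpha$.

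Next I would apply \lemref{lem:el_under_covers} to the map $f(z)=z^{4}$, which has degree $4$, critical points $\{0,\infty\}$, and critical values $\{0,\infty\}$. Setting $Q=\{0,\,1/a^4,\,a^4,\,\infty\}$, we have $f(P)=\{1/a^4,a^4\}$ and $f^{-1}(Q)=P\cup\{0,\infty\}$, so $f^{-1}(Q)\setminus P=\{0,\infty\}$ consists exactly of the critical points of $f$, verifying the hypotheses. The image $\gamma=f(\alpha)$ is a round circle in $W=\CHAT$ separating $\{0,1/a^4\}$ from $\{a^4,\infty\}$, and $f^{-1}(\gamma)=\alpha$ set-theoretically. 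Thus
\[
\el(\alpha,Y)=4\,\el(\gamma,W\setminus Q).
\]

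To evaluate $\el(\gamma,W\setminus Q)$, I would apply the Möbius transformation $z\mapsto a^{4}z$, which sends $Q$ to $\{0,1,a^{8},\infty\}$ and $\gamma$ to a curve separating $(0,1)$ from $a^{8}$. By \lemref{lem:formule4pt} with $t=a^{8}$,
\[
\el(\gamma,W\setminus Q)=2\,(K/K')(1/\sqrt{a^{8}})=2\,(K/K')(1/a^{4}),
\]
and combining with the previous display gives the stated value $8\,(K/K')(1/a^{4})$.

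Finally, for the certified numerical enclosure I would express $K(k)=\pi/(2M(1,k'))$ via the arithmetic-geometric mean as in the introduction, and evaluate $K(1/a^{4})$ and $K'(1/a^{4})=K(\sqrt{1-a^{-8}})$ with interval arithmetic (for instance through \texttt{CBF.agm1()} in SageMath), then divide the intervals. There is no real mathematical obstacle: the whole proof is a clean application of earlier lemmas, and the only bookkeeping one must do carefully is to check that the circle $|z|=r$ is sent by $f$ to a circle in the correct homotopy class in $W\setminus Q$ (which is immediate) and to confirm the hypothesis $f^{-1}(Q)\setminus P\subset\operatorname{Crit}(f)$ needed to apply \lemref{lem:el_under_covers}.
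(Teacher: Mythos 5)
Your proposal is correct and follows essentially the same route as the paper: identify the punctured cube with $\CHAT\setminus\{\pm a,\pm ia,\pm 1/a,\pm i/a\}$ via Lemma~\ref{lem:plus_sign}, quotient the four-fold symmetry by a degree-$4$ power map to reach a $4$-punctured sphere, and invoke Lemmas~\ref{lem:el_under_covers} and~\ref{lem:formule4pt}. The only cosmetic difference is that the paper uses $f(z)=(az)^4$ to land directly on $\{0,1,a^8,\infty\}$, whereas you use $f(z)=z^4$ and then rescale, which is the same computation split into two steps.
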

\begin{proof}
Position the cube as in the previous lemma. Then the face curve obtained by intersecting this cube with the $xy$-plane is sent to the unit circle under the conformal map in question. The extremal length is therefore equal to $\el(\alpha, \CHAT \setminus \{ \pm a, \pm i a, \pm 1/a, \pm i/a \})$ where $\alpha$ is the unit circle. 

We then apply the map $f(z) = (az)^4$ to quotient out the four-fold symmetry. This map sends four vertices to $1$ and the other four to $a^8$, and has critical values at $0$ and $\infty$ where we must puncture. We thus let $W = \CC \setminus \{0, 1, a^8 \}$. The map $f$ sends $\alpha$ to $\beta$ traced $4$ times, where $\beta$ is the circle of radius $a^4$, so that $f^{-1}(\beta)=\alpha$. By \lemref{lem:formule4pt}, we have
\[\el(\beta, W) = 2\, (K/K')(1/\sqrt{t})\]
where $t = a^8$ and by \lemref{lem:el_under_covers} we have
\[ \el(\alpha, Z) = 4 \, \el(\beta, W) = 8 \, (K/K')(1/\sqrt{t})  = 8 \, (K/K')(1/a^4).\]
The rigorous enclosure
\[ \el(\alpha, Z) \in [3.12680384539222 \pm 8.07\times 10^{-15}] \]
is obtained in \texttt{SageMath} (see the ancillary file \href{https://arxiv.org/src/2404.00336/anc}{\texttt{integrals}}, available at \href{https://arxiv.org/src/2404.00336/anc}{arxiv.org/src/2404.00336/anc}).
\end{proof}

\subsection{The diagonal curves}

We define a \emph{diagonal curve} on $X$ to be a curve that surrounds a diagonal of a face (see \figref{fig:diagonal}). We can use the same configuration as in the previous subsection to compute the extremal length of diagonal curves.

\begin{prop} \label{prop:diagonal}
The extremal length of any diagonal curve on the cube minus its vertices is equal to
\[
4\, (K / K')(1/\sqrt{t}) \in  [4.1335929781133 \pm  2.81\times 10^{-14}],
\]
where $t = 2a^4/(1+a^4)$ and $a = (1+\sqrt{3})/\sqrt{2}$.
\end{prop}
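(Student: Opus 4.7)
The approach mirrors the three-step template of \propref{prop:face}, but incorporates \lemref{lem:add_punctures} to reduce a $6$-punctured quotient down to the $4$-punctured sphere required by \lemref{lem:formule4pt}. Since the isometry group of the cube acts transitively on the face diagonals, it suffices to compute the extremal length for the curve $\alpha$ surrounding the top-face diagonal from $a$ to $-a$ in the coordinates of \lemref{lem:plus_sign}. To locate this diagonal on the sphere, observe that the reflection $(x,y,z) \mapsto (x,-y,z)$ of $\RR^3$ fixes it pointwise and corresponds under stereographic projection to $z \mapsto \bar z$, so its image lies on the real axis. As the top face maps to the component of the sphere containing $\infty$ (which follows from the proof of \propref{prop:face}, where the face curve maps to the unit circle with $\pm a, \pm ia$ on its exterior), the image of the diagonal must be the arc $(-\infty,-a] \cup [a,\infty)$ through $\infty$. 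Consequently, $\alpha$ is the boundary of a thin lens neighborhood of this arc and separates $\{a,-a,\infty\}$ from the other six vertices together with the origin.

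Next, rescale by the M\"obius map $z \mapsto z/a$ to place the diagonal endpoints at $\pm 1$, and apply $f(z)=z^2$ to quotient by the symmetry $z\mapsto -z$. Setting $P=\{\pm 1, \pm i, \pm 1/a^2, \pm i/a^2\}$ and $Q=\{0, \pm 1, \pm 1/a^4, \infty\}$, we have $f^{-1}(Q)\setminus P=\{0,\infty\}$, which is exactly the set of critical points of $f$, so \lemref{lem:el_under_covers} applies and gives $\el(\alpha,X)=2\,\el(\gamma,\CHAT\setminus Q)$. Since $0$ and $\infty$ lie on opposite sides of $\alpha$ (as determined in the previous paragraph), the image curve $\gamma=f(\alpha)$ separates $\{1,\infty\}$ from $\{0,-1,\pm 1/a^4\}$.

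Finally, apply the affine map $z \mapsto \tfrac{a^4}{a^4+1}(z+1)$, which sends $-1\mapsto 0$, $1/a^4\mapsto 1$, $1\mapsto t:=2a^4/(a^4+1)>1$, and fixes $\infty$. The two remaining punctures $0$ and $-1/a^4$ land at $\tfrac{a^4}{a^4+1}$ and $\tfrac{a^4-1}{a^4+1}$, both inside $(0,1)$, so by \lemref{lem:add_punctures} they may be discarded without changing the extremal length. Then \lemref{lem:formule4pt} yields $\el(\gamma,\CHAT\setminus Q)=2(K/K')(1/\sqrt{t})$, and multiplying by the degree gives $\el(\alpha,X)=4(K/K')(1/\sqrt{t})$; the numerical enclosure then follows from interval arithmetic as in the proof of \propref{prop:face}. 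The main care point is tracking the homotopy class of $\gamma$ in the first two steps, since on the $6$-punctured quotient it depends crucially on which side of $\alpha$ the critical point $0$ lies.
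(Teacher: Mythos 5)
Your proof is correct and follows essentially the same strategy as the paper's: apply the squaring map to quotient by the order-two symmetry, normalize with a M\"obius transformation so the curve separates $\{0,1\}$ from $t$, discard the two leftover punctures via \lemref{lem:add_punctures}, and invoke \lemref{lem:formule4pt}. The only differences are cosmetic: the paper uses the bottom-face diagonal (so the arc is $[-1/a,1/a]$ rather than the arc through $\infty$), rescales by $a^2$ after squaring rather than by $1/a$ before, and normalizes with $g(z)=2z/(z+1)$ so the extra punctures land in $(t,\infty)$ rather than in $(0,1)$; all of these lead to the identical value of $t$.
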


\begin{proof}
Position the cube as in the previous subsection. Under the conformal map of \lemref{lem:plus_sign}, one of the diagonal curves on the bottom of the cube is sent to the curve $\alpha$ surrounding $[-1/a,1/a]$ in the plane.

We then apply the squaring map $f(z) = z^2$, which sends the punctures to $-a^2, -1/a^2, 1/a^2$, and $a^2$, and has critical points and values at $0$ and $\infty$. The diagonal curve $\alpha$ is sent in a $2$-to-$1$ fashion onto a curve $\beta$ surrounding $[0,1/a^2]$. By \lemref{lem:el_under_covers}, the extremal length of $\alpha$ in \[\CHAT \setminus \{ \pm a, \pm i a, \pm 1/a, \pm i/a \}\] is twice the extremal length of $\beta$ in $\CHAT \setminus \{0, \pm 1/a^2, \pm a^2, \infty \}$ (recall that we need to puncture at the critical values as well as the images of the punctures).

We then scale by $a^2$ to obtain $\CHAT \setminus \{0, \pm 1, \pm a^4, \infty\}$ and apply the M\"obius transformation $g(z) = \frac{2z}{z+1}$, which fixes $0$ and $1$, and sends $-1$ to $\infty$. This leaves us with punctures at $g(-1)=\infty$, $g(0)=0$, $g(1)=1$, 
\[
t_1 = g(a^4) = \frac{2a^4}{1+a^4}, \quad t_2 = g(\infty) = 2, \quad \text{and} \quad t_3 = g(-a^4) = \frac{2a^4}{a^4 - 1}.
\]
This sends the curve $\beta$ to a curve $\gamma$ surrounding $[0,1]$. Since $a>1$, it is easy to check that $1<t_1<t_2<t_3$. By \lemref{lem:add_punctures}, the punctures $t_2$ and $t_3$ do not change the value of the extremal length and we get
\begin{align*}
\el(\alpha, \CHAT \setminus \{ \pm a, \pm i a, \pm 1/a, \pm i/a \}) &= 2 \, \el(\beta,\CC \setminus \{0, \pm 1/a^2, \pm a^2 \})\\
&= 2 \, \el(\gamma,\CC \setminus \{0, 1, t_1, t_2,  t_3 \}) \\
&= 2 \, \el(\gamma,\CC \setminus \{0, 1, t_1\}) \\
&= 4\, (K/K')(1/\sqrt{t_1}).
\end{align*}
\texttt{SageMath} provides the rigorous enclosure $[4.1335929781133 \pm  2.81\times 10^{-14}]$ for the last expression (see the ancillary file \href{https://arxiv.org/src/2404.00336/anc}{\texttt{integrals}}).
\end{proof}

\begin{rem} \label{rem:staple}
We define a \emph{staple curve} on the cube as a curve that surrounds the union of a face diagonal together with the two edges that are adjacent to the endpoints of the diagonal but do not belong to the face it is contained in (this looks like a staple straddling the cube, see \figref{fig:staple}). One staple curve can be represented as the curve surrounding $[-a,a]$ in $\CHAT \setminus \{ \pm a, \pm i a, \pm 1/a, \pm i/a \}$. Similar calculations as in the above lemma show that the extremal length of this curve is
\[
4\, (K / K')(1/\sqrt{t}) \in  [6.9282032302755 \pm 2.74\times 10^{-14}]
\]
where $t = 1+1/a^4$ and $a = (1+\sqrt{3})/\sqrt{2}$. We will not need this calculation to prove \thmref{thm:main}.
\end{rem}


\subsection{The hexagon curves}

We now consider the \emph{hexagon curves}, defined as those obtained by intersecting the cube with a plane that bisects a main diagonal (joining diametrically opposite vertices) perpendicularly. This forms a regular hexagon on the surface of the cube (see \figref{fig:hexagon}). Topologically, a hexagonal curve separates the three edges adjacent to a vertex from the three edges adjacent to the opposite vertex. 

We will not need the results of this subsection to prove \thmref{thm:main} either, but the calculations are nice, and the resulting extremal length is the third smallest that we have found.

Note that hexagon curves display a three-fold symmetry. We thus want to apply a M\"obius transformation to put the fixed points of this rotation at $0$ and $\infty$. Equivalently, before applying the stereographic projection, we want to put two vertices of the cube at $(0,0,\pm1)$. These two points are then sent to $0$ and $\infty$, and interestingly, the images of the remaining six vertices form an equilateral triangle.

\begin{lem}  \label{lem:triangle}
If a cube is inscribed in the unit sphere in such a way that two of its vertices lie at $(0,0,\pm1)$, then the stereographic projection sends the remaining six vertices to the vertices and the midpoints of the sides of an equilateral triangle in the plane.
\end{lem}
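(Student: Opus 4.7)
The plan is to reduce the claim to a clean geometric coincidence for equilateral triangles. First I would start from the standard cube with vertices $(\pm 1, \pm 1, \pm 1)/\sqrt{3}$ inscribed in the unit sphere and apply a rotation sending $V=(1,1,1)/\sqrt{3}$ to $(0,0,1)$, so that the antipodal vertex $-V$ goes to $(0,0,-1)$. The remaining six vertices split into the three neighbors of $V$ and the three neighbors of $-V$, each of these triples being permuted cyclically by the order-three rotation about the $z$-axis that fixes the main diagonal.

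Next I would locate the two triples explicitly. The cube's edge length is $2/\sqrt{3}$, so a point $P$ on the unit sphere at Euclidean distance $2/\sqrt{3}$ from $(0,0,1)$ must satisfy $z=1/3$. Hence the three neighbors of $V$ form an equilateral triangle inscribed in the horizontal circle of radius $2\sqrt{2}/3$ at height $z=1/3$, and the three neighbors of $-V$ form an equilateral triangle on the circle of the same radius at height $z=-1/3$. Applying the stereographic projection $S(x,y,z)=(x+iy)/(1-z)$, which acts as a radial dilation on each horizontal circle and preserves arguments, the upper triangle lands on an equilateral triangle inscribed in the circle $|w|=\sqrt{2}$, while the lower triangle lands on one inscribed in $|w|=\sqrt{2}/2$.

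The key numerical coincidence to invoke is that an equilateral triangle of circumradius $R$ has inradius $R/2$, so the midpoints of the sides of the upper (outer) triangle lie exactly on the circle $|w|=\sqrt{2}/2$ that already carries the lower triangle. Consequently the six projected vertices lie on the union of the vertices and the midpoints of the sides of the outer triangle; it only remains to verify that the angular positions match.

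The main (and really only) obstacle is this angular alignment, which I would handle as follows. Since opposite vertices of the cube are antipodal on the sphere, each neighbor of $-V$ is the antipode of a neighbor of $V$, and antipodal points have arguments differing by $\pi$ after projection onto the $xy$-plane. For a set with three-fold rotational symmetry, a rotation by $\pi$ coincides with a rotation by $\pi/3$, which is precisely the rotation that takes the vertices of the outer triangle to the midpoints of its sides. Because $S$ preserves arguments, this alignment survives in the plane, so the three vertices of the inner triangle coincide with the three side-midpoints of the outer triangle and the lemma follows.
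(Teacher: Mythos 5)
Your proof is correct, but it takes a genuinely different and more conceptual route than the paper's. The paper pins down explicit coordinates: it rotates so that one vertex $v$ has zero $y$-coordinate, uses the edge-to-face-diagonal length ratio $1:\sqrt{2}$ to solve for $v=(2\sqrt{2}/3,0,1/3)$, computes $S(v)=\sqrt{2}$, applies the antipodal formula to get $S(-v)=-1/\sqrt{2}$, and then verifies directly that $-1/\sqrt{2}$ is the midpoint of a side of the triangle with vertices $\sqrt{2}e^{2\pi i k/3}$. Your argument instead works with the two horizontal circles at $z=\pm 1/3$ (located via the edge length $2/\sqrt{3}$), notes that stereographic projection sends them to circles of radii $\sqrt{2}$ and $\sqrt{2}/2$, and then invokes two structural facts: that the inradius of an equilateral triangle is half its circumradius, and that for a $3$-fold symmetric configuration a rotation by $\pi$ acts the same as a rotation by $\pi/3$, which is exactly the offset between the vertices and the side-midpoints. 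What your route buys is an explanation of \emph{why} the coincidence holds --- it isolates the $1{:}2$ inradius-to-circumradius ratio of the equilateral triangle as the essential fact --- whereas the paper's calculation is a bit shorter and more self-contained, verifying the claim by a single arithmetic identity. Both proofs rest on the same two symmetries (the $3$-fold rotation about the main diagonal and the antipodal involution); you just package them more geometrically.
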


\begin{figure}
\centering
\includegraphics[width=0.5\textwidth]{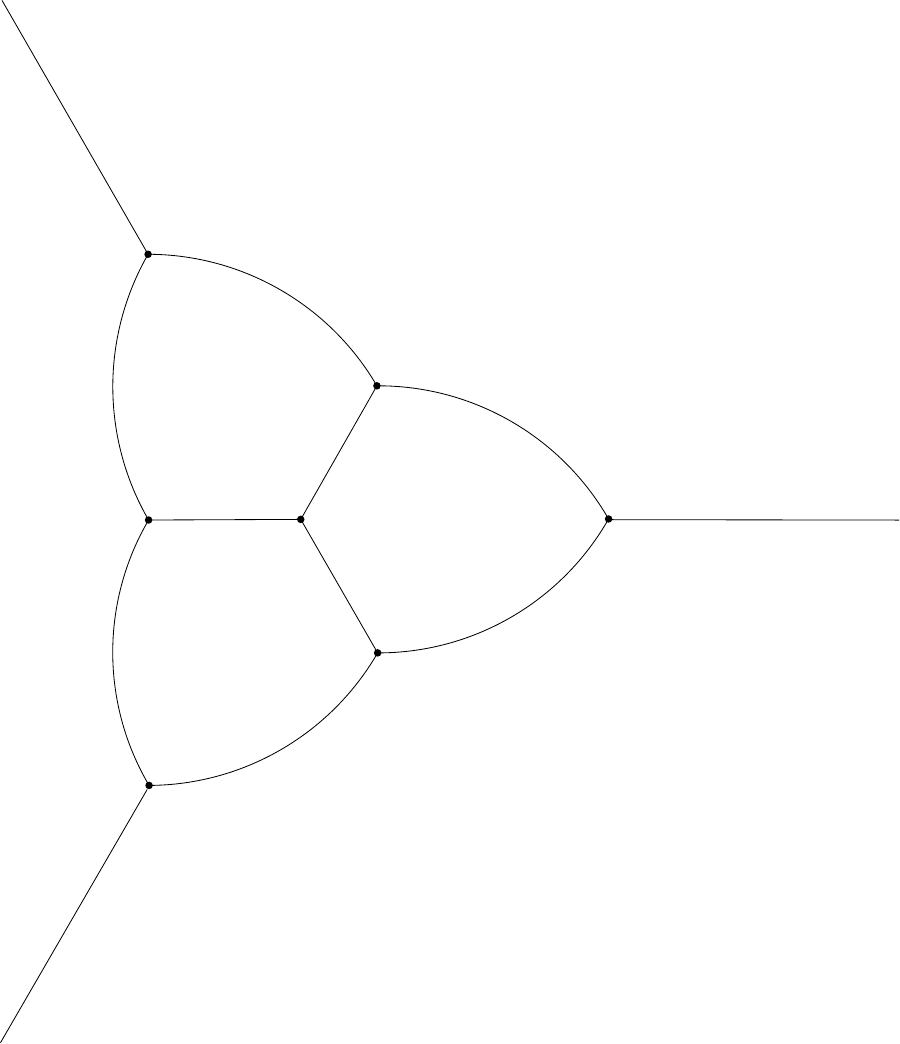}
\caption{A stereographic projection of the cube displaying its $3$-fold symmetry.}
\label{fig:equilateral}
\end{figure}

\begin{proof}
Rotate the cube so that one vertex $v$ has $y$-coordinate equal to zero and the other two coordinates positive. The segment between $v$ and $(0,0,1)$ is an edge of the cube while the segment from $v$ to $(0,0,-1)$ is the diagonal of a face, hence their lengths are in ratio of $1$ to $\sqrt{2}$. A little algebra then shows that $v = (2\sqrt{2}/3,0,1/3)$. The image of $v$ by the stereographic projection is then equal to
\[
S(v) = S(2\sqrt{2}/3,0,1/3) = \frac{2\sqrt{2}/3}{1- 1/3} = \sqrt{2}.
\]
The diametrically opposite vertex $-v$ is sent to
\[
S(-v)=\frac{-1}{S(v)} = -\frac{1}{\sqrt{2}}
\]
since $S$ conjugates the antipodal map to $z\mapsto -1/z$.

By the order 3 rotational symmetry of the cube around the $z$-axis, the other vertices are sent to $\sqrt{2} e^{\pm \frac{2\pi i}{3}}$ and $\frac{1}{\sqrt{2}} e^{\pm \frac{\pi i}{3}}$. We then compute
\[
\frac{\sqrt{2} e^{\frac{2\pi i}{3}}+ \sqrt{2} e^{-\frac{2\pi i}{3}}}{2} = \sqrt{2}\cos(2\pi/3)=-\frac{\sqrt{2}}{2}= -\frac{1}{\sqrt{2}}
\]
so that $S(-v)$ is indeed the midpoint of a side of the equilateral triangle with vertices $\sqrt{2}$, $\sqrt{2} e^{\frac{2\pi i}{3}}$, and $\sqrt{2} e^{\frac{-2\pi i}{3}}$. Then $\frac{1}{\sqrt{2}} e^{\pm \frac{\pi i}{3}}$ are the midpoints of the other two sides by rotational symmetry.
\end{proof}

The images of the edges of the cube by the radial projection onto the sphere followed by the stereographic projection onto the plane are shown in \figref{fig:equilateral}. We then use this configuration to compute the extremal length of the hexagon curves.

\begin{prop}
The extremal length of any hexagon curve on the cube minus its vertices is equal to
\[
6 (K / K')(1/3) \in [3.83778471351302 \pm 9.40\times 10^{-15}].
\]
\end{prop}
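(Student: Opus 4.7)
The plan is to follow the strategy used in the proofs of \propref{prop:face} and \propref{prop:diagonal}, replacing the $4$-fold symmetry exploited there with the $3$-fold rotational symmetry of a hexagon curve about the relevant main diagonal. I would first inscribe the cube in the unit sphere so that the two endpoints of that diagonal lie at $(0,0,\pm 1)$; by \lemref{lem:triangle}, the remaining six vertex punctures then land under stereographic projection at
\[
\sqrt{2},\ \sqrt{2}e^{\pm 2\pi i/3},\ -\tfrac{1}{\sqrt{2}},\ \tfrac{1}{\sqrt{2}}e^{\pm i\pi/3},
\]
with the three upper-layer vertices on the circle $|z|=\sqrt{2}$ and the three lower-layer vertices on $|z|=1/\sqrt{2}$. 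In this picture the hexagon curve $\alpha$ is invariant under the rotation $z\mapsto e^{2\pi i/3}z$ and separates the top-half punctures $\{\infty,\sqrt{2},\sqrt{2}e^{\pm 2\pi i/3}\}$ from the bottom-half punctures $\{0,-1/\sqrt{2},(1/\sqrt{2})e^{\pm i\pi/3}\}$.

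Next I would apply the degree-$3$ quotient map $f(z)=z^{3}$. It is ramified only at the fixed points $0$ and $\infty$; the three upper-layer punctures all collapse to $2\sqrt{2}$ and the three lower-layer ones to $-1/(2\sqrt{2})$. With $P$ the set of eight vertex punctures and $Q=\{0,\infty,2\sqrt{2},-1/(2\sqrt{2})\}$, we have $f^{-1}(Q)=P$, so the hypotheses of \lemref{lem:el_under_covers} are satisfied and
\[
\el(\alpha) \;=\; 3\cdot\el(\beta,\CHAT\setminus Q),
\]
where $\beta=f(\alpha)$ is the simple closed curve in $\CHAT\setminus Q$ separating $\{0,-1/(2\sqrt{2})\}$ from $\{\infty,2\sqrt{2}\}$.

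To evaluate $\el(\beta,\CHAT\setminus Q)$ I would normalize using the M\"obius transformation $\phi(z)=9z/(z-2\sqrt{2})$, which sends $0\mapsto 0$, $-1/(2\sqrt{2})\mapsto 1$, $2\sqrt{2}\mapsto\infty$, and $\infty\mapsto 9$. The image of $\beta$ then separates $\{0,1\}$ from $\{9,\infty\}$, so \lemref{lem:formule4pt} with $t=9$ gives $\el(\beta,\CHAT\setminus Q)=2(K/K')(1/3)$. Multiplying by the covering degree $3$ yields the extremal length of the hexagon curve as a constant multiple of $(K/K')(1/3)$, and the certified numerical enclosure then follows from SageMath's AGM-based implementation of $K$, exactly as in the two previous propositions.

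The only step that requires real care is the choice of M\"obius normalization: one must send $2\sqrt{2}$ (rather than $\infty$) to $\infty$, so that the resulting cross-ratio is $9>1$ and \lemref{lem:formule4pt} can be applied directly. The alternative ordering produces a cross-ratio of $-8$, which does not fit the hypotheses of the lemma. Once the three-fold rotation is used via \lemref{lem:triangle} to pin down the vertex positions, and the correct pairing is selected on the quotient, the rest reduces to routine M\"obius algebra and a SageMath enclosure.
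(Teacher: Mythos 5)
Your proposal is correct and takes essentially the same route as the paper: both invoke \lemref{lem:triangle} to place the six remaining vertices at $\sqrt{2}e^{2\pi i k/3}$ and $(1/\sqrt{2})e^{i\pi(2k+1)/3}$, both quotient by the order-$3$ rotational symmetry about the main diagonal, and both finish with \lemref{lem:formule4pt} at $t=9$. The only cosmetic difference is the choice of degree-$3$ map: the paper uses $f(z)=(\sqrt{2}z)^3+1$, which places the four critical-value/image punctures directly at $\{0,1,9,\infty\}$ and sends the unit circle to the circle of radius $2\sqrt{2}$ about $1$, whereas you use the bare cube $z\mapsto z^3$ (landing on $\{0,\infty,2\sqrt{2},-1/(2\sqrt{2})\}$) followed by the M\"obius map $\phi(z)=9z/(z-2\sqrt{2})$ to reach the same normal form. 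These differ only by post-composition with a M\"obius transformation, so the cross-ratio and hence the answer agree. One small remark: your concluding multiplication $3\cdot 2(K/K')(1/3)=6(K/K')(1/3)$ matches the paper's own proof; the coefficient $4$ appearing in the proposition statement is a typo, as is confirmed both by the proof and by the numerical enclosure $\approx 3.8378$, which equals $6(K/K')(1/3)$, not $4(K/K')(1/3)\approx 2.56$.
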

\begin{proof}
The previous lemma  together with \lemref{lem:radial} shows that there is a conformal map from the cube to the Riemann sphere that sends the vertices to $\left\{0,\infty,-\frac{1}{\sqrt{2}},\frac{1}{\sqrt{2}} e^{\pm \frac{\pi i}{3}},\sqrt{2},\sqrt{2} e^{\pm\frac{2\pi i}{3}}\right\}$ and sends a hexagon curve to a curve homotopic to the unit circle.

We now apply the map $f(z) = (\sqrt{2}z)^3+1$, which sends the punctures (and critical points) to $0$, $1$, $9$, and $\infty$, and the unit circle to the circle $\beta$ of radius $2\sqrt{2}$ centered at $1$. Since $1<1+2\sqrt{2}< 9$, this circle separates $(0,1)$ from $9$ and $\infty$. By \lemref{lem:el_under_covers} and \lemref{lem:formule4pt}, the extremal length of the hexagon curve on $X$ is 
\[
3 \el(\beta, \CC \setminus \{ 0, 1, 9\}) = 6 \, (K/K')(1/3).
\]
\texttt{SageMath} provides the rigorous enclosure $[3.83778471351302 \pm 9.40\times 10^{-15}]$ for this expression (see the ancillary file \href{https://arxiv.org/src/2404.00336/anc}{\texttt{integrals}}).
\end{proof}

\begin{rem}
We define \emph{tripod curves} as the curves that surround the union of the three segments from the origin to $\sqrt{2}$, $\sqrt{2} e^{\frac{2\pi i}{3}}$ and $\sqrt{2} e^{-\frac{2\pi i}{3}}$ (see Fi\-gure \ref{fig:tripod} for what this corresponds to on the cube). A similar calculation as above shows that the extremal length of this tripod curve is
\[
6 (K/ K')(2 \sqrt{2}/3) \in [9.3804115361767 \pm 5.12\times 10^{-14}].
\]
 In fact, a similar argument as in \cite[p.24]{BolzaEL} shows that the pro\-duct of the extremal lengths of the hexagon and tripod curves is equal to $36=6^2$. This is an example where Minsky's inequality (see \secref{sec:Minsky}) is sharp.
\end{rem}

\section{The edge curves} \label{sec:edge}

In this section, we prove a characterization of the quadratic differentials that realize the extremal length of the \emph{edge curves}, defined as the curves that surround any edge of the cube. For this, we first quotient by a rotation of order $2$, which reduces the calculation to computing the extremal length of a certain curve on a six-times-punctured sphere. Because the six punctures do not all lie on a circle, this extremal length cannot be calculated as a ratio of complete elliptic integrals of the first kind. However, we still manage to show that the quadratic differential realizing the extremal length belongs to a certain $1$-parameter family, and then find a criterion to determine the correct parameter. Solving for this parameter amounts to solving a transcendental equation, which is why we do not obtain an exact formula in the end. We can nevertheless estimate the parameter and the resulting extremal length rigorously using interval arithmetic.

\subsection{Quotienting by the rotational symmetry}

The first step is to apply a conformal transformation to highlight the order $2$ rotational symmetry of the edge curves.

\begin{lem}  \label{lem:edge_configuration}
There is a conformal map between the cube minus its vertices and $\CHAT \setminus \{\pm \rho, \pm 1/\rho, \pm \omega , \pm \overline{\omega}\}$, where
\[
\rho = \sqrt{3}-\sqrt{2}
\]
and
\[
\omega = \frac{2\rho  + i(1-\rho^2)}{1+\rho^2}=\frac{1 + i\sqrt{2}}{\sqrt{3}}.
\] 
Furthermore, this map sends an edge of the cube to the interval $(-\rho,\rho)$.
\end{lem}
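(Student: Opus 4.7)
The plan is to build on Lemma \ref{lem:plus_sign} and post-compose with a carefully chosen M\"obius transformation. In the configuration of Lemma \ref{lem:plus_sign}, the cube is inscribed in the unit sphere with vertices mapping to $\{\pm a, \pm ia, \pm 1/a, \pm i/a\}$ under stereographic projection, where $a = (\sqrt{3}+1)/\sqrt{2}$. Consider the vertical edge $e$ from $(\sqrt{2}/\sqrt{3}, 0, 1/\sqrt{3})$ to $(\sqrt{2}/\sqrt{3}, 0, -1/\sqrt{3})$, whose endpoints correspond to $a$ and $1/a$. The conformal map provided by Lemma \ref{lem:radial} is equivariant under the isometry group of the cube, and the reflection across the $xz$-plane, which fixes $e$ pointwise, corresponds to $w \mapsto \bar w$ on $\CHAT$; hence the image of $e$ lies on the real axis. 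Since it is a simple arc from $a$ to $1/a$ avoiding the other punctures, it must be the interval $[1/a, a]$.

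The key idea is then to apply the real M\"obius transformation $T(z) = (z-1)/(z+1)$, which is monotonically increasing on $\RR$ and sends $1 \mapsto 0$ and $-1 \mapsto \infty$. A short calculation gives $T(a) = (a-1)/(a+1) = \sqrt{3} - \sqrt{2} = \rho$, and combined with the algebraic identities $T(-z) = 1/T(z)$, $T(1/z) = -T(z)$, and $T(\bar z) = \overline{T(z)}$, one immediately sees that the four real vertices $\pm a, \pm 1/a$ are sent to $\pm \rho, \pm 1/\rho$. In particular, $T([1/a, a]) = [-\rho, \rho]$, so the edge maps correctly.

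For the four imaginary-axis vertices, a direct computation using $a^2 = 2 + \sqrt{3}$ yields
\[
T(ia) = \frac{(a^2-1) + 2ai}{a^2+1},
\]
and substituting the inverse identity $a = (1+\rho)/(1-\rho)$ transforms this into $(2\rho + i(1-\rho^2))/(1+\rho^2)$, which simplifies to $(1 + i\sqrt{2})/\sqrt{3} = \omega$. The remaining images $T(-ia), T(\pm i/a)$ then follow from the symmetries of $T$, using $|\omega| = 1$ so that $1/\omega = \overline{\omega}$. The main obstacle is verifying the two algebraic identities $T(a) = \rho$ and $T(ia) = \omega$: both reduce to routine manipulations with radicals, after which the rest of the proof is straightforward bookkeeping.
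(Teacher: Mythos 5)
Your proposal is correct and takes essentially the same approach as the paper's proof, which also post-composes the map from Lemma \ref{lem:plus_sign} with a real M\"obius transformation of the form $z\mapsto \text{const}\cdot\frac{z-1}{z+1}$. The only cosmetic difference is directionality: the paper first rescales by $a$ to put the punctures at $\{\pm 1,\pm i,\pm a^2,\pm i a^2\}$, then defines $f(z)=a\,\frac{z-1}{z+1}$ mapping the \emph{target} configuration to this one and solves $f(-1/\rho)=a^2$ for $\rho$, whereas you apply $T(z)=\frac{z-1}{z+1}$ forward to $\{\pm a,\pm i a,\pm 1/a,\pm i/a\}$ and verify the images directly; your systematic use of the identities $T(-z)=1/T(z)$, $T(1/z)=-T(z)$, $T(\bar z)=\overline{T(z)}$ makes the bookkeeping slightly cleaner, but the underlying M\"obius transformation and algebra are the same.
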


\begin{proof}
By \lemref{lem:plus_sign} and a further homothety of factor $a = (1+\sqrt{3})/\sqrt{2}$, there is a conformal map $g$ from the cube to $\CHAT$ that sends the vertices to  $\{\pm 1, \pm i, \pm a^2, \pm i a^2 \}$.

For $\rho \notin \{ -1, 1, \infty\}$, the M\"obius transformation 
\[
f(z) = \left(\frac{1+\rho}{1-\rho}\right) \left(\frac{z-1}{z+1}\right)
\]
satisfies $f(1)=0$, $f(1/\rho)=1$, $f(-1)=\infty$, and $f(-1/\rho)=\left(\frac{1+\rho}{1-\rho} \right)^2$. We want to choose $\rho$ such that $f(-1/\rho)=a^2$. For this, we set
\[
\frac{1+\rho}{1-\rho} = a,
\]
which is equivalent to
\[
\rho = \frac{a - 1}{a+1} = \frac{1+\sqrt{3}-\sqrt{2}}{1+\sqrt{3}+\sqrt{2}}.
\]
We can also write
\begin{align*}
\rho &= \left(\frac{1-\sqrt{2}+\sqrt{3}}{1+\sqrt{3}+\sqrt{2}}\right)\left(\frac{1-\sqrt{2}-\sqrt{3}}{1-\sqrt{3}-\sqrt{2}}\right)\\ & = \frac{2\sqrt{2}}{4+2\sqrt{6}}
 = \frac{1}{\sqrt{2}+\sqrt{3}}= \sqrt{3}-\sqrt{2}.
\end{align*}

We then compute $\rho^2 = 5-2\sqrt{6}$,
\[
1+\rho^2 = 6 - 2\sqrt{6} = 2\sqrt{3} \, \rho
\]
and
\[
1-\rho^2 = 2\sqrt{6}-4 = 2\sqrt{2} \, \rho,
\]
so that
\[
\frac{2\rho  + i(1-\rho^2)}{1+\rho^2} = \frac{1 + i\sqrt{2}}{\sqrt{3}}.
\]

Elementary calculations then show that $f(\rho)=-1$, $f(-\rho) = -a^2$, $f(\omega) = i$, $f(\overline{\omega}) = -i$, $f(-\omega)=i a^2$, and $f(-\overline{\omega})=-i a^2$. The desired conformal map is thus given by $f^{-1}\circ g$. 

Since $f$ is real on the real line and its pole is at $-1 < -\rho$, we have that $f((-\rho,\rho)) = (-a^2,-1)$. It is easy to see that this interval is also the image of an edge of the cube by $g$.
\end{proof}

We can then express the extremal length of an edge curve in terms of another curve on a six-times-punctured sphere instead of an eight-times-punctured one.

\begin{cor} \label{cor:reduction_to_6}
The extremal length of any edge curve on the cube minus its vertices is equal to $2 \el(\beta, \CC \setminus \{ 0, \rho^2, 1/\rho^2, \omega^2, \overline{\omega}^2\})$ where $\rho$ and $\omega$ are as in \lemref{lem:edge_configuration} and  $\beta$ is a curve surrounding $[0,\rho^2]$.
\end{cor}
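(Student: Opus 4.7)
The plan is to exploit the order-$2$ rotational symmetry of an edge curve (visible in the configuration provided by \lemref{lem:edge_configuration}) and quotient by it using the squaring map, then invoke \lemref{lem:el_under_covers}.

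First, by \lemref{lem:edge_configuration}, there is a conformal equivalence sending the cube minus its vertices to $Z:=\CHAT \setminus \{\pm \rho, \pm 1/\rho, \pm \omega, \pm \overline{\omega}\}$ such that an edge of the cube is identified with the interval $(-\rho, \rho)$. An edge curve is therefore (conformally) represented by a simple closed curve $\alpha$ in $Z$ that surrounds the segment $[-\rho,\rho]$. Since extremal length is a conformal invariant, it suffices to compute $\el(\alpha, Z)$.

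The puncture set of $Z$ is symmetric under $z\mapsto -z$, and the curve $\alpha$ can be chosen to be invariant under this involution. I would then apply the degree-$2$ holomorphic map $f\from \CHAT \to \CHAT$, $f(z)=z^2$, whose critical points are $0$ and $\infty$ and whose critical values coincide with the images of these critical points. The map $f$ sends $\pm\rho \mapsto \rho^2$, $\pm 1/\rho \mapsto 1/\rho^2$, $\pm\omega \mapsto \omega^2$ and $\pm\overline{\omega}\mapsto \overline{\omega}^2$, so setting
\[
Q := \{0,\rho^2, 1/\rho^2, \omega^2, \overline{\omega}^2, \infty\}, \qquad P:= f^{-1}(Q) \setminus \{0,\infty\} = \{\pm \rho, \pm 1/\rho, \pm \omega, \pm \overline{\omega}\},
\]
we have $f^{-1}(Q)\setminus P = \{0,\infty\}$, which is exactly the set of critical points of $f$. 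Thus the hypotheses of \lemref{lem:el_under_covers} are satisfied with $Z=W=\CHAT$.

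Let $\beta$ be a simple closed curve in $\CC\setminus\{0,\rho^2,1/\rho^2,\omega^2,\overline{\omega}^2\}$ that surrounds the segment $[0,\rho^2]$. Since $f^{-1}([0,\rho^2]) = [-\rho,\rho]$, the preimage $f^{-1}(\beta)$ is a simple closed curve (or possibly two components; I would check it is connected by inspecting how $f$ wraps a small loop around $[0,\rho^2]$) that is homotopic in $Z$ to $\alpha$. Applying \lemref{lem:el_under_covers} with $d=2$ then gives
\[
\el(\alpha, Z) = \el(f^{-1}(\beta), Z) = 2 \, \el(\beta, \CC\setminus \{0,\rho^2, 1/\rho^2, \omega^2, \overline{\omega}^2\}),
\]
which is the claimed identity.

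The only subtle point is verifying that $f^{-1}(\beta)$ is connected and homotopic to $\alpha$ inside $Z$, which amounts to checking that $\beta$ is not separating the set $\{0,\rho^2\}$ from $\{1/\rho^2, \omega^2, \overline{\omega}^2\}$ in a way that would pull back to two parallel copies; since $\beta$ can be taken close to $[0,\rho^2]$ and the other punctures are outside, the preimage is a single loop around $[-\rho,\rho]$, as desired. Everything else is a routine bookkeeping of critical values versus punctures.
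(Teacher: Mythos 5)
Your proposal is correct and takes essentially the same route as the paper: identify the cube with the eight-punctured sphere of \lemref{lem:edge_configuration}, represent the edge curve as a $(z\mapsto -z)$-invariant loop around $[-\rho,\rho]$, and quotient by the squaring map via \lemref{lem:el_under_covers}. The paper resolves the connectivity of $f^{-1}(\beta)$ slightly more concretely by taking $\alpha$ to be the circle of radius $\sqrt{\rho}$ centered at the origin (noting $\sqrt{\rho}<1=|\omega|$ so the non-real punctures are excluded), whose image under $f$ is the circle of radius $\rho$ traced twice, making $f^{-1}(\beta)=\alpha$ immediate; your argument that $\beta$ encircles the critical value $0$ so its preimage is a single loop is the same point made a bit more loosely.
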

\begin{proof}
Let $\alpha$ be a curve which surrounds $[-\rho, \rho]$ in $\CHAT \setminus \{\pm \rho, \pm 1/\rho, \pm \omega , \pm \overline{\omega}\}$ and is invariant under the rotation $z\mapsto -z$. For instance, we can take $\alpha$ to be the circle of radius $\sqrt{\rho}$ centered at the origin (note that $\rho < 1 = |\omega|^2$, so this circle does not enclose the non-real punctures). By \lemref{lem:edge_configuration}, the extremal length of an edge curve on the cube punctured at its vertices is equal to the extremal length of $\alpha$ in  $\CHAT \setminus \{\pm \rho, \pm 1/\rho, \pm \omega , \pm \overline{\omega}\}$.

We then apply the squaring map $f(z)=z^2$, which sends the punctures and critical points to $0$, $\rho^2$, $1/\rho^2$, $\omega^2$, $\overline{\omega}^2$, and $\infty$. We also have that $f(\alpha)$ traces the circle $\beta$ of radius $\rho$ centered at the origin twice, so that $f^{-1}(\beta)=\alpha$. By \lemref{lem:el_under_covers}, we have 
\[
\el(\alpha,\CHAT \setminus \{\pm \rho, \pm 1/\rho, \pm \omega , \pm \overline{\omega}\}) = 2 \el(\beta, \CC \setminus \{ 0, \rho^2, 1/\rho^2, \omega^2, \overline{\omega}^2\})
\]
as required.
\end{proof}

\begin{rem} \label{rem:algebra}
Recall that $\rho^2 = 5-2\sqrt{6}$, so that $1/\rho^2 = 5 + 2\sqrt{6}$. We also have
\[
\omega^2 = \left( \frac{1 + i \sqrt{2}}{\sqrt{3}} \right)^2 = \frac{-1+i \,2\sqrt{2}}{3}.
\]
This will be useful in subsection \ref{subsec:numerical}.
\end{rem}

As mentioned before, the fact that $\omega^2$ and $\overline{\omega}^2$ are not real implies that we cannot express $\el(\beta, \CC \setminus \{ 0, \rho^2, 1/\rho^2, \omega^2, \overline{\omega}^2\})$ simply in terms of elliptic integrals. Moreover, there is no further rotational symmetry to exploit. Indeed, $\{0, \rho^2, 1/\rho^2,\omega^2,\overline{\omega}^2,\infty\}$ is invariant under the involution $z \mapsto 1/z$, but the latter does not preserve the homotopy class of $\beta$. If we instead map \[\{0, \rho^2, 1/\rho^2, \infty\}\] to four points that are symmetric in pairs about the origin (with the images of $0$ and $\rho^2$ in one pair), then $\omega^2$ and $\overline{\omega}^2$ do no land on the imaginary axis, so we cannot square again.

Although $\beta$ and  $\CC \setminus \{ 0, \rho^2, 1/\rho^2, \omega^2, \overline{\omega}^2\}$ admit a reflection symmetry across the real axis, quotienting by this symmetry does not simplify things much. Instead of doing that, we simply exploit the fact that the quadratic differential realizing the extremal length of $\beta$ must be invariant under that symmetry. To determine this quadratic differential, we make an educated guess for what it should look like, then progressively refine our guess in the next two subsections.

\subsection{A 1-parameter family of Jenkins--Strebel differentials}

Our goal is to identify the quadratic differential realizing the extremal length of the curve $\beta$ in \corref{cor:reduction_to_6}. However, all of the arguments in this subsection apply to the following more general situation. Suppose that $A < B < C$ and $\im W > 0$. Let $\beta$ be a curve surrounding $[A,B]$ in \[\CC \setminus \{ A, B, C, W, \wbar{W} \},\] and let $q_\beta$ be the quadratic differential realizing the extremal length of $\beta$.

Since we are working on the Riemann sphere, we know that $q_\beta = f_\beta(z) dz^2$ for some rational function $f_\beta$. Since any rational function is determined up to a constant by its zeros and its poles (with multiplicities), we need to locate these singularities.

We know that $q_\beta$ has at most $6$ simple poles at the punctures. Indeed, $q_\beta$ is required to be holomorphic on the surface $\CHAT \setminus \{ A, B, C, W, \wbar{W},\infty\}$ and to be integrable, so its poles must be simple. Furthermore, by the Euler--Poincaré formula, the number of zeros minus the number of poles of $q_\beta$ is equal to $-4$. This can also be shown by observing that $dz^2$ has no zeros and a pole of order $4$ at infinity. Moreover, the ratio of two quadratic differentials is a meromorphic function (hence a rational function in this case), which has the same number of zeros as poles on a closed surface, namely, its degree. We conclude that $q_\beta$ has at most two zeros counting multiplicity.

Since the domain $\CHAT \setminus \{ A, B, C, W, \wbar{W},\infty\}$ and the curve $\beta$ are symmetric about the real axis, the uniqueness part of Jenkins's theorem implies that $q_\beta$ is also symmetric about the real axis, so that its zeros and its poles are either real or come in conjugate pairs. Thus either $q_\beta$ has a pair of conjugate non-real zeros, two simple real zeros, one double real zero, one simple real zero (in which case one of the punctures is not a pole), or no zero at all (in which case two of the punctures are not poles). We rule out some of these possibilities by considering the structure of the critical graph of $q_\beta$. By definition, the \emph{critical graph} of a quadratic differential is the union of the critical trajectories together with the singularities they limit to.

\begin{lem}
If $P \subset \CHAT$ is a finite set and $q$ is a Jenkins--Strebel quadratic differential on $\CHAT \setminus P$ with only one cylinder, then the critical graph of $q$ forms a pair of topological trees whose leaves are the simple poles of $q$, hence contained in $P$. 
\end{lem}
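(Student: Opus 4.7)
The plan is to use that $\CHAT$ is a sphere to decompose it as the cylinder $C$ together with two complementary pieces, and to show each piece is a topological tree whose leaves are simple poles of $q$. I would begin by picking any regular closed horizontal trajectory $\gamma \subset C$. Being a simple closed curve in $\CHAT$, $\gamma$ separates the sphere into two open disks $D_1$ and $D_2$ by the Jordan curve theorem, which yields a disjoint decomposition $\CHAT \setminus C = E_1 \sqcup E_2$ into closed sets, where $E_i := \overline{D_i} \setminus C$. Since $q$ has only the single cylinder $C$, every point of $\CHAT \setminus P$ outside $C$ lies on a critical trajectory, so $E_i \subset \Gamma \cup P$, where $\Gamma$ denotes the critical graph. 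The half-cylinder $C \cap D_i$ is an open sub-annulus of $C$ whose inner end, in the natural parameter, is the continuous image of a circle collapsing onto $E_i$; hence each $E_i$ is connected.

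The key step is to show $E_i$ is a tree, which I would do by contradiction. Suppose $E_i$ contains an embedded simple closed curve $\sigma$. Since $\sigma \subset D_i$, it bounds a subdisk $\Delta \subset D_i$ by Jordan. Because $\Gamma \cup P$ is $1$-dimensional (a finite union of open arcs and isolated points), $\Delta$ cannot be contained in $E_i$, so the interior of $\Delta$ must contain at least one point of $C$. Then the nonempty open set $C \cap \Delta$ is separated from $\gamma \subset C \setminus \Delta$ by the curve $\sigma \subset \CHAT \setminus C$, contradicting the connectedness of the cylinder $C$. Hence $E_i$ has no embedded cycles, and being a connected closed $1$-dimensional CW-complex, it is a topological tree.

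Finally, I would identify the leaves using the prong count recalled in the background: from a singularity of $q$ of order $k \geq -1$ emanate exactly $k+2$ prongs, so a simple pole (order $-1$) is a vertex of degree $1$ in $\Gamma$, while a genuine zero (order $k \geq 1$) has degree at least $3$. The leaves of $E_i$, which are by definition its degree-$1$ vertices, are therefore exactly the simple poles of $q$ contained in $E_i$; since $\Gamma = E_1 \sqcup E_2$, this gives the pair of trees with the desired leaves. The main subtlety I expect is formalizing the collapse of each end of $C$ onto $E_i$ (in particular the connectedness of $E_i$), which should follow from the standard local picture of a Jenkins--Strebel differential near a zero or simple pole.
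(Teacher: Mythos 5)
Your proof is correct and follows essentially the same line as the paper's: the complement of the one-cylinder annulus in the sphere has two components, cycles in the critical graph are ruled out because they would disconnect the cylinder, and the leaves (one-prong singularities) are identified as simple poles. The paper's proof is terser and leaves implicit the step you spell out with the $1$-dimensionality of $\Gamma \cup P$ — namely that the disk bounded by a putative cycle must contain a point of the cylinder — so your version is the same argument with a useful extra detail rather than a different approach.
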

\begin{proof}
Let $U$ be the union of the regular horizontal trajectories of $q$. By hypothesis, $U$ is an annulus, so its complement in $\CHAT$ has two connected components. Now $G = \CHAT \setminus U$ is precisely the critical horizontal graph of $q$. If $G$ contained a cycle, then its complement $U$ would not be connected by the Jordan curve theorem. Hence $G$ is a forest, and since it has two components, it is a pair of trees.
The leaves of the critical graph are $1$-prong singularities, which correspond to simple poles. Since $q$ is holomorphic in $\CHAT \setminus{P}$, these are contained in $P$.
\end{proof}

We use this to show that $q_\beta$ has a unique zero in $[- \infty, A) \cup ( B, +\infty)$.

\begin{lem}  \label{lem:unique_zero}
Let $q_\beta$ be the quadratic differential realizing the extremal length of the curve $\beta$ surrounding $[A,B]$ in $\CC \setminus \{ A, B, C, W, \wbar{W}\}$, where $A<B<C$ and $\im W >0$. Then $q_\beta$ has simple poles at $A$, $B$, $W$, and $\wbar W$, and either
\begin{itemize}
\item simple poles at $C$ and $\infty$, and a double zero in $(C , + \infty)$;
\item a simple pole at $C$ but not at $\infty$, and a simple zero in $[-\infty,A)$;
\item or a simple pole at $\infty$ but not at $C$, and a simple zero in $(B,C]$.
\end{itemize}
\end{lem}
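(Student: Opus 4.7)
The plan is to analyze the combinatorial structure of the critical graph $G$ of $q_\beta$, using the Euler--Poincar\'e count, symmetry under complex conjugation, and the tree structure from the previous lemma. That lemma gives $G = T_1 \sqcup T_2$ as two trees whose leaves are the simple poles of $q_\beta$, and I would orient them so that the cylinder $U = \CHAT \setminus G$ has core curves homotopic to $\beta$, hence separating $\{A,B\}$ from $\{C,W,\wbar W,\infty\}$. The uniqueness clause in Jenkins's theorem, together with the reflective symmetry of the data, makes $q_\beta$ real on the real axis, so $G$ and the zero and pole sets are conjugation-symmetric.

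The first step is to show that $T_1$ equals the single edge from $A$ to $B$ and that $A, B$ are both simple poles. The leaves of $T_1$ lie among the ``inside'' punctures $\{A, B\}$. Since zeros have degree at least three and every tree with at least two vertices has at least two leaves, a short degree count rules out any internal zero vertex in $T_1$, and also rules out $T_1$ having only one leaf. The remaining subcases in which $A$ or $B$ is a regular point of $q_\beta$ (either in the interior of $U$ or on an edge of $T_1$) are ruled out by a standard homotopy argument: nearby regular horizontal trajectories on the two sides of a regular puncture in $U$ cannot all be freely homotopic in $S$, contradicting Jenkins's theorem.

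The main obstacle is showing that $W$ and $\wbar W$ are simple poles. By symmetry they are of the same type, and Euler--Poincar\'e rules out both being zeros (otherwise all six punctures would need to be poles). The delicate case is when both are regular points of the extended differential, because the ``in $U$'' homotopy argument does not a priori cover the possibility that they lie on edges of the critical graph rather than in the open cylinder. To close this loophole, I would observe that if $W, \wbar W$ were regular then $q_\beta$ would extend to a single-cylinder Jenkins--Strebel differential on the 4-punctured sphere $S' = \CHAT \setminus \{A, B, C, \infty\}$ with core homotopic to $\beta$, and by uniqueness on $S'$ it would coincide up to positive scaling with the explicit differential
\[
q' \;=\; \frac{c \, dz^2}{(z-A)(z-B)(z-C)}.
\]
The critical graph of $q'$ is $[A,B] \cup [C,\infty]$ on the real circle, so $W$ and $\wbar W$ lie inside the cylinder of $q'$. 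Restricting back to $S$, the closed trajectories of $q'$ through $W$ and $\wbar W$ become critical, the cylinder breaks into three sub-cylinders, and this contradicts the single-cylinder hypothesis on $S$.

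The case analysis on whether $C$ and $\infty$ are poles then yields the three alternatives. A variant of the above comparison argument (now using the Jenkins--Strebel differential on $\CHAT \setminus \{A,B,W,\wbar W\}$) rules out both being regular, and Euler--Poincar\'e fixes the total zero order as two if both $C,\infty$ are poles and one otherwise. Conjugation symmetry together with a combinatorial check on symmetric tree structures places the zeros on the real circle. A prong-direction analysis at each zero then pins down the admissible positions: every critical prong leaving a zero must reach a vertex of $T_2$ without first hitting the edge $T_1 = [A,B]$. In case~A this forces a single double zero in $(C, +\infty)$. In case~B the simple zero lies on the arc from $\infty$ through negative reals up to $A$ exclusive; the endpoint $\infty$ corresponds to the zero being at infinity, whereas an interior value in $(-\infty, A)$ corresponds to a finite zero whose real prong then passes through the regular point $\infty$ on its way to $C$. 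Case~C is symmetric, with the zero in $(B, C]$ and the endpoint $C$ corresponding to the zero sitting at the puncture $C$.
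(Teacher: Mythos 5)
Your overall approach---analyzing the critical graph as a pair of trees $T_1$, $T_2$, using the Euler--Poincar\'e count, and exploiting the reflective symmetry---matches the paper's, and your treatment of $T_1$ is essentially the same. Where you diverge is in showing that $W$ and $\wbar W$ are simple poles: the paper observes that if $W$ were not a leaf of $T_2$, then the unique non-backtracking path $\eta$ from $W$ to $\wbar W$ could be prolonged to end at a conjugate pair of distinct leaves, of which there are no others; you instead compare with the explicit differential on $S'=\CHAT\setminus\{A,B,C,\infty\}$. The idea works, but the step asserting that ``$q_\beta$ would extend to a single-cylinder Jenkins--Strebel differential on $S'$ with core homotopic to $\beta$'' is not automatic, and you do not justify it. It is actually cleaner to bypass Jenkins uniqueness here: if $W$ and $\wbar W$ are regular, then the leaves of $T_2$ lie in $\{C,\infty\}$, so by Euler--Poincar\'e $q_\beta$ has exactly four simple poles and no zeros on $\CHAT$; by a dimension count it is a positive multiple of $dz^2/((z-A)(z-B)(z-C))$, and the contradiction follows since $W$ is not on the critical graph $[A,B]\cup[C,+\infty]$ of that differential. (Also, by conjugation symmetry the closed trajectory through $W$ is the same one that passes through $\wbar W$, so it separates the cylinder into two pieces, not three---a harmless slip.)

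The final case analysis is where a genuine gap remains. In Case~A you assert that a ``prong-direction analysis'' forces a single double zero in $(C,+\infty)$, but Euler--Poincar\'e only fixes the total zero order at two; you have not ruled out two distinct real simple zeros, nor a conjugate pair of non-real simple zeros. Excluding these possibilities takes an actual combinatorial argument (for instance, a real simple zero has exactly one prong on $\RR$, which is incompatible with the several real neighbours it would need in a conjugation-symmetric tree $T_2$). The phrases ``a combinatorial check on symmetric tree structures'' and ``pins down the admissible positions'' paper over exactly the part of the lemma that needs proving. The paper handles all three cases at once by locating the unique point $r_0$ where $\eta$ crosses $\RR\cup\{\infty\}$, then reading the order of the zero off the number of prongs meeting at $r_0$ and ruling out a fourth prong by tracing where it could land; this is both shorter and complete. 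Your outline is recoverable, but as written the location and multiplicity of the zeros are asserted rather than derived.
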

\begin{proof}
Let $T_1$ and $T_2$ be the two trees in the critical graph $G$ of $q_\beta$.  separated by the regular trajectories. By definition, the regular horizontal trajectories of $q_\beta$ are homotopic to $\beta$, so one of the trees, say $T_1$, has its leaves in the set $\{A, B\}$ while $T_2$  has its leaves in the set $\{C, W, \wbar{W}, \infty\}$, because $\beta$ separates these two sets of punctures. Now, a tree with only two leaves is simply an edge, and by the invariance of $G$ under complex conjugation, we deduce that $T_1 = [A,B]$. In particular, $q_\beta$ has simple poles at $A$ and $B$.

Since $T_2$ is a tree, there is a unique shortest (or non-backtracking) path $\gamma$ in $T_2$ between any two of its points. Let $\gamma$ be the shortest path between $C$ and $\infty$ in $T_2$. By the uniqueness of $\gamma$ and the invariance of $T_2$ under complex conjugation, $\gamma$ must be equal to $[C,+\infty]$. By a similar argument, the shortest path $\eta$ between $W$ and $\wbar{W}$ in $T_2$ is preserved by complex conjugation, so it is an arc that crosses $\RR \cup \{\infty \}$ in a single point that we call $r_0$. 

Note that $W$ and $\wbar{W}$ are necessarily leaves of $T_2$. Otherwise we could extend $\eta$ to a longer trajectory ending in a conjugate pair of (distinct) leaves, but $W$ and $\wbar{W}$ is the only conjugate pair of distinct punctures. We conclude that $q_\beta$ has simple poles at $W$ and $\wbar{W}$.

If $r_0 \in (C,+\infty)$, then $q_\beta$ has a double zero at $r_0$ because there are at least $4$ horizontal trajectories emanating from that point (the union of $\gamma$ and $\eta$) and we observed earlier that there cannot be more than $2$ zeros counting multiplicity. In that case, $q_\beta$ must have simple poles at $C$ and $\infty$ for the number of zeros minus the number of poles to be equal to $-4$ counting multiplicity.

By considering the shortest path from $r_0$ to $C$ in $T_2$, we see that if $r_0<A$ then $(-\infty,r_0] \subset T_2$ and if $r_0\in (B,C]$ then $[r_0,C] \subset T_2$. In either case, since there are at least $3$ horizontal trajectories emanating from $r_0$ (a real interval together with $\eta$), the quadratic differential $q_\beta$ has a zero at $r_0$. Furthermore, in these cases, the zero must be simple. Otherwise, there would be a fourth trajectory emanating from $r_0$ along the real axis, but there is nowhere for this trajectory to end. Indeed, there is no other real zero because from such a zero would leave a symmetric pair of trajectories not contained in the real axis. This pair of trajectories cannot end in $W$ and $\overline{W}$ since $\eta$ is the unique trajectory ending in these points, and it cannot return to the real axis elsewhere either since $T_2$ is a tree. Hence if $r_0<A$, then a putative fourth trajectory emanating from $r_0$ would necessarily end at the next pole $A$, which is not possible since the only trajectory emanating from that point is $(A,B)$. A similar argument rules out a fourth trajectory if $r_0\in (B,C]$.

This covers all the possibilities since $r_0$ cannot belong to $[A,B]$ because $T_1$ and $T_2$ are disjoint.
\end{proof}

This leads us to define the following $1$-parameter family of quadratic differentials.

\begin{defn}
Let
\[
f_r(z) := \begin{cases} \displaystyle\frac{1}{(z-A)(z-B)(z-C)(z-W)(z-\overline{W})} & \text{if } r = -\infty \\
\displaystyle\frac{z-r}{(z-A)(z-B)(z-C)(z-W)(z-\overline{W})} & \text{if } r\in(-\infty,A) \\
\displaystyle \frac{z-r}{(z-A)(z-B)(z-W)(z-\overline{W})} & \text{if }r\in (B,C] \\
\displaystyle\frac{(z-r)^2}{(z-A)(z-B)(z-C)(z-W)(z-\overline{W})} & \text{if }r\in (C,+\infty)
\end{cases}
\]
and let $q_r = f_r(z)dz^2$.
\end{defn}

The previous lemma implies that $q_\beta$ belongs to this $1$-parameter family.

\begin{cor}
There exists a unique $r_0 \in [- \infty, A) \cup ( B, +\infty)$ such that $q_\beta = \lambda q_{r_0}$ for some $\lambda>0$.
\end{cor}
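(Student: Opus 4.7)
The plan is to invoke Lemma~\ref{lem:unique_zero} directly: that lemma already pins down the full divisor of $q_\beta$ on $\CHAT$, and the family $\{q_r\}$ is designed to realize exactly these possible divisors.

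The first step is to verify that the four cases in the definition of $f_r$ (namely $r=-\infty$, $r\in(-\infty,A)$, $r\in(B,C]$, $r\in(C,+\infty)$) correspond bijectively to the three cases of Lemma~\ref{lem:unique_zero}. The only nontrivial check is the behavior at $\infty$: in the chart $w=1/z$ one has $dz^2 = dw^2/w^4$, so $f_r(z)\sim z^{-n}$ yields a quadratic differential with a pole of order $n-4$ (or a zero of order $4-n$) at $\infty$. A quick computation gives $f_r\sim z^{-3}$ when $r\in(B,+\infty)$ (simple pole at $\infty$), $f_r\sim z^{-4}$ when $r\in(-\infty,A)$ (no pole or zero at $\infty$), and $f_{-\infty}\sim z^{-5}$ (simple zero at $\infty$, consistent with the zero migrating to $\infty$). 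Together with the explicit factorizations, this recovers exactly the three divisor profiles listed in Lemma~\ref{lem:unique_zero}.

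For existence, Lemma~\ref{lem:unique_zero} furnishes an $r_0\in[-\infty,A)\cup(B,+\infty)$ such that $q_\beta$ and $q_{r_0}$ have identical divisors. Their ratio is then a meromorphic function on $\CHAT$ without zeros or poles, hence a nonzero constant $\lambda\in\CC^*$. To see $\lambda>0$, I would compare signs along the segment $(A,B)$, which is a critical horizontal trajectory of $q_\beta$ (part of the tree $T_1$) and therefore satisfies $f_\beta>0$ there. A routine case count using $(z-A)>0$, $(z-B)<0$, $(z-C)<0$, and $|z-W|^2>0$ for $z\in(A,B)$ shows $f_{r_0}(z)>0$ on $(A,B)$ in every case, so $\lambda>0$.

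For uniqueness, any $r_0'\in[-\infty,A)\cup(B,+\infty)$ giving the same $q_\beta$ up to positive scale yields $q_{r_0'}$ with the same divisor as $q_{r_0}$; the location and multiplicity of the zero of $q_r$ determine $r$, whence $r_0=r_0'$. The only subtle point is the bookkeeping around $r=-\infty$ and $r=C$: at $r=C$ the $(B,C]$-formula and the formal limit from $(C,+\infty)$ coincide after a zero-pole cancellation, but the definition resolves this by placing $r=C$ in the $(B,C]$ branch. I do not anticipate any deeper obstacle beyond this bookkeeping.
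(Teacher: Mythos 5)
Your proof is correct and follows essentially the same route as the paper: invoke Lemma~\ref{lem:unique_zero} to identify the divisor of $q_\beta$, match it against the divisor of $q_{r_0}$ (you add a useful explicit check at $\infty$ via the chart $w=1/z$, which the paper leaves implicit), conclude the ratio $f_\beta/f_{r_0}$ is a nonzero constant, and then fix its sign by positivity on the horizontal trajectory $(A,B)$. The only cosmetic imprecision is the claim of a bijection between the four branches of the definition of $f_r$ and the three bullets of Lemma~\ref{lem:unique_zero}: the branches $r=-\infty$ and $r\in(-\infty,A)$ both land in the second bullet (a simple zero in $[-\infty,A)$), so the correspondence is not quite one-to-one, but this does not affect the argument.
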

\begin{proof}
Let $r_0  \in [- \infty, A) \cup (B, +\infty)$ be the zero of $q_\beta$ provided by \lemref{lem:unique_zero} and let $f_\beta$ be the rational function such that $q_\beta = f_\beta(z)dz^2$. \lemref{lem:unique_zero} tells us the order of the zero of $q_\beta$ at $r_0$ and the location of the simple poles, depending on which interval $r_0$ belongs to among $[-\infty, A)$, $(B, C]$, and $(C,+\infty)$. Now, the finite zeros and poles of $f_\beta$ are the same as those of $q_\beta$, which also coincide with those of $f_{r_0}$ by \lemref{lem:unique_zero} and the definition of $f_r$. It follows that the rational function $f_\beta / f_{r_0}$ has no zeros nor poles in $\CC$, hence is equal to a constant $\lambda \in \CC \setminus \{ 0 \}$. Furthermore, since $q_\beta$ is symmetric about the real axis, it is real when evaluated at vectors tangent to the real axis, which is to say that $f_\beta$ is real along the real axis. By construction, $f_{r_0}$ is also real along the real axis since $(z-W)(z-\overline{W})=z^2-2\re(W) z + |W|^2$ and the other factors are all real. We conclude that $\lambda \in \RR \setminus \{0 \}$. Lastly, we know from \lemref{lem:unique_zero} that $(A,B)$ is a horizontal trajectory of $q_\beta$, which means that $f_{\beta}$ is positive on that interval. It is easy to check that the same holds for $f_r$ for any $r$, hence $\lambda = f_\beta / f_{r_0} > 0$. 
\end{proof}

Note that since $q_\beta$ itself is only defined up to a positive constant anyway, we may as well say that $q_\beta = q_{r_0}$.

\subsection{Finding the correct parameter}

In this subsection, we devise a criterion to determine the parameter $r_0$ such that $q_\beta = q_{r_0}$. We first observe that the quadratic differentials $q_r$ are all Jenkins--Strebel for topological reasons.

\begin{lem} \label{lem:cylinders}
For every $r \in [-\infty,A) \cup (B,+\infty)$, the quadratic differential $q_r$ is Jenkins--Strebel with at most two cylinders. 
\end{lem}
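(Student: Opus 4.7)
The plan is to exhibit the critical horizontal graph $\Gamma_r$ of $q_r$ explicitly and apply a ribbon-graph / topology argument to count the components of its complement in $\CHAT$, which (once identified as annuli) must be the maximal horizontal cylinders of $q_r$. Throughout the analysis I would use two tools: the reality of $f_r$ on $\RR$, which forces $\RR \cup \{\infty\}$ to be a union of horizontal and vertical trajectories with intervals of the former connecting consecutive real singularities, and the Euler--Poincar\'e prong count, which yields $8$ or $10$ prongs according as $q_r$ has a simple or double zero.

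First I would perform the sign analysis of $f_r$ on $\RR$. Since the factor $(z-W)(z-\overline{W}) = |z-W|^2 > 0$ for real $z$, the sign depends only on the real linear factors in $f_r$, and I can read off the horizontal segments of $\RR$ (those where $f_r > 0$) directly. Then I would locate the prongs at the zero of $q_r$: three at $120^\circ$ apart for a simple zero, or four at right angles for a double zero, with one or two of them lying along $\RR$ in the directions where $f_r$ becomes positive nearby. The complex conjugation symmetry of $q_r$ then implies that the remaining off-axis prongs come in a single conjugate pair in the simple-zero case and in one conjugate pair in the double-zero case, and that the prongs at $W$ and $\overline{W}$ are conjugate to each other.

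The heart of the argument is a case analysis of how these conjugate pairs of off-axis prongs get connected into trajectories. By the conjugation symmetry and the requirement that every prong ends at a singularity, there are essentially two possibilities: either (\textbf{Structure A}) the off-axis prongs from the zero connect directly to $W$ and $\overline{W}$ respectively, making $\Gamma_r$ a disjoint union of two trees whose ribbon-graph thickening has $2$ boundary circles and hence whose complement in $\CHAT$ is a single annulus; or (\textbf{Structure B}) the off-axis prongs from the zero loop back to the zero by crossing $\RR$ perpendicularly at a point where $f_r < 0$, forcing $W$ and $\overline{W}$ to pair up by symmetry through another perpendicular crossing, making $\Gamma_r$ a three-component graph (a lollipop at the zero together with the isolated saddle $[A,B]$ and an arc from $W$ to $\overline{W}$) whose ribbon-graph thickening has $4$ boundary circles and whose complement consists of two annuli.

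In either structure, the complement of $\Gamma_r$ in $\CHAT$ is a disjoint union of at most two annuli whose boundaries consist entirely of horizontal critical trajectories. The natural parameter $\zeta = \int \sqrt{q_r}\, dz$ on each component therefore identifies it with a Euclidean horizontal cylinder foliated by closed horizontal trajectories, and hence $q_r$ is Jenkins--Strebel with at most two cylinders. The main obstacle is showing that the off-axis prongs really do behave in one of these two ways, rather than, for instance, winding densely over some region; this requires using that all prongs at singularities are exhausted by the Euler--Poincar\'e count, so the foliation has no ``free'' half-trajectories available to support a minimal component, together with the conjugation symmetry, which reduces the combinatorics of the pairings to the two structures described above.
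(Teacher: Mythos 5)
Your proposal correctly identifies the key objects — the sign of $f_r$ on $\RR$, the reflection symmetry, the prong count — but it has a genuine gap at exactly the step you flag as the ``main obstacle.'' Knowing that the Euler--Poincar\'e formula ``exhausts'' the prong count does \emph{not} rule out a minimal (dense-trajectory) component. The formula only counts how many separatrices emanate from the singularities; it says nothing about whether those separatrices are saddle connections of finite length or are recurrent and dense in some subdomain. Every minimal component contains separatrices, so there is no contradiction between ``all prongs are accounted for'' and the existence of a minimal component — irrational foliations on punctured spheres with only simple poles exist. Consequently, your claim that the complement of $\Gamma_r$ is a union of annuli is not established, and your Structure A/B dichotomy (itself not obviously exhaustive: separatrices could cross $\RR$ several times before terminating) presupposes the very finiteness of the critical graph that needs to be proved.

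The paper's argument sidesteps this entirely. It first observes that the separatrix from $W$ must meet $\RR \cup \{\infty\}$ (there being no other singularities in the open upper half-plane), so by symmetry it closes into an arc $\eta$ joining $W$ to $\wbar{W}$. One then cuts $\CHAT$ along $[A,B]$, $[C,+\infty]$, and $\eta$. The resulting bordered surface $S$ is either an annulus or a pair of pants, and the restriction of $q_r$ to $S$ is holomorphic and positive along $\partial S$. At this point one invokes known structural results: a quadratic differential on an annulus that is positive on the boundary has only closed trajectories (one cylinder), and on a pair of pants it is Jenkins--Strebel with at most three cylinders (Jenkins; \cite[Prop.~6.8]{FLP}), with the three-cylinder case excluded because it would require two interior zeros while $q_r$ has only one (and a double zero would lie on $\partial S$). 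Cutting and citing these classical facts replaces the delicate direct classification of the critical graph — and, crucially, it is where the ``no minimal component'' conclusion actually comes from, rather than from the prong count. If you want to repair your argument along your original lines, you would need an independent proof that every separatrix of $q_r$ is a saddle connection, which is essentially the content of the theorems the paper quotes.
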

\begin{proof}
 In all cases, $(A,B)$ and $(C,+\infty)$ are horizontal because $f_r > 0$ there and $dz^2$ is positive when evaluated on vectors tangent to the real line (i.e., on real vectors). The critical trajectory of $q_r$ emanating from $W$ must intersect $\RR \cup \{ \infty\}$ since there are no other zeros or poles in the open upper half-plane. Together with its reflection, this trajectory forms an arc $\eta$ that connects $W$ and $\wbar{W}$.
 
 Cut the Riemann sphere along  $[A,B]$, $[C,+\infty]$ and $\eta$. The resulting surface $S$ is an annulus if $\eta \cap [C,+\infty] \neq \varnothing$ and a pair of pants otherwise (since $q_r$ has no zeros in $[A,B]$, this interval is disjoint from $\eta$). Furthermore, the restriction of $q_r$ to $S$ is holomorphic and extends to be analytic and non-negative along the ideal boundary of $S$, since cutting along the critical trajectory from a simple pole amounts to taking a square root, which gets rid of the pole. 
 
 It is well-known that any holomorphic quadratic differential on an annulus that is po\-si\-ti\-ve along the boundary only has closed trajectories that are all homotopic to each other (see e.g. \cite[Theorem 9.4]{Strebel}). Thus, if $S$ is an annulus, then $q_r$ is Jenkins--Strebel with a single cylinder. In that case, since the regular horizontal trajectories of $q_r$ surround $[A,B]$, they are homotopic to $\beta$, so that $q_r= q_\beta$ and hence $r=r_0$.
 
 Suppose instead that $S$ is a pair of pants. On a pair of pants, any quadratic differential $\psi$ that is positive along the boundary is Jenkins--Strebel with at most $3$ cylinders \cite{JenkinsPants} (see also \cite[Proposition 6.8]{FLP}). If there are three cylinders, then $\psi$ has no zeros on the boundary, hence has two zeros in the interior counting multiplicity. This can be seen by sewing the three ends of the pair of pants shut to obtain a quadratic differential on the sphere with six simple poles (where the boundary components were folded in half). Since the number of poles minus the number of zeros of a quadratic differential on the sphere is $4$, the total number of zeros in the interior of the pair of pants must be equal to $2$. Alternatively, this follows from the Euler--Poincaré formula for surfaces with boundary.
 
 By construction, $q_r$ has a unique zero and if it is double, then it lies in $(C,+\infty)$ and not in the interior of $S$, so it cannot have $3$ cylinders by the previous paragraph. We conclude that there are at most two cylinders in this case.
\end{proof}

In order to determine the parameter $r_0$, it thus suffices to determine when $q_r$ has exactly one horizontal cylinder rather than two. We need one more lemma before we can give a criterion to detect the presence of a second cylinder.

\begin{lem} \label{lem:twice}
For every $r \in [-\infty,A) \cup (B,+\infty)$, each regular horizontal trajectory of $q_r$ intersects the real axis exactly twice.
\end{lem}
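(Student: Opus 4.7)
The plan is to combine the complex-conjugation symmetry $\sigma(z) = \overline z$ with the uniqueness of horizontal trajectories through regular points. Since $f_r$ has real coefficients, the horizontal foliation of $q_r$ is $\sigma$-invariant, so the $\sigma$-image of a regular horizontal trajectory $\gamma$ is again a regular horizontal trajectory. First I would examine how $\gamma$ can intersect the real axis: at any $p \in \gamma \cap \RR$, the condition $f_r(p)(\gamma'(t))^2 > 0$ together with $f_r(p) \in \RR$ forces $\gamma'(t)$ to be either real (if $f_r(p) > 0$) or purely imaginary (if $f_r(p) < 0$). If $f_r(p) > 0$, then uniqueness of horizontal trajectories through the regular point $p$ would force $\gamma$ to coincide with a critical horizontal trajectory on the real axis, contradicting regularity. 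Hence $f_r(p) < 0$ at every intersection, so $\gamma$ crosses $\RR$ transversally.

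At such a transverse crossing, $\sigma(\gamma)$ is a horizontal trajectory through $p = \sigma(p)$ and therefore equals $\gamma$ by uniqueness. Since $\sigma$ reverses the purely imaginary tangent vector $\gamma'(t)$ at $p$, the restriction $\sigma|_\gamma$ is an orientation-reversing involution of the Jordan curve $\gamma$, i.e., a reflection of a circle, which has exactly two fixed points. These fixed points are precisely $\gamma \cap \RR$, so $|\gamma \cap \RR| = 2$ whenever $\gamma$ meets $\RR$.

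The main obstacle is to rule out the case $\gamma \cap \RR = \varnothing$. I would argue by contradiction: if $\gamma$ lies in the open upper half-plane, then the Jenkins--Strebel cylinder $C \ni \gamma$ also does, the companion cylinder $\sigma(C)$ lies in the lower half-plane, and by \lemref{lem:cylinders} these exhaust the cylinders of $q_r$. The core of $C$ would then be homotopic to a loop around the unique puncture in the open upper half-plane, namely $W$, so some connected component $L$ of the critical graph would bound $C$ on its non-$W$ side as a cycle in the closed upper half-plane enclosing only $W$. Since each simple pole of $q_r$ supports only one prong and so cannot be an interior vertex of a cycle, every vertex of $L$ would be a zero of $q_r$; as the only zero on the (extended) real axis is $r$, the cycle $L$ must reduce to a loop at $r$ based on two of its prongs. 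However, $\sigma$-symmetry at the real point $r$ partitions the prongs into those lying on $\RR$ (already consumed by the real-axis critical trajectory through $r$) and off-axis $\sigma$-conjugate pairs. Any loop at $r$ using the remaining prongs would therefore pair one prong strictly in the open upper half-plane with its $\sigma$-conjugate strictly in the open lower half-plane, forcing the loop to extend into both half-planes and contradicting the assumption that $L$ lies in the closed upper half-plane. This rules out $C$ and completes the proof.
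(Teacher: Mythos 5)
Your first two paragraphs follow the same skeleton as the paper: transversality of regular trajectories with $\RR$ (because $f_r>0$ on an interval forces the trajectory to run along $\RR$, which only critical trajectories do), $\sigma$-invariance of an individual trajectory by uniqueness, and the count of fixed points of the orientation-reversing involution $\sigma|_\gamma$ of a circle. That last step is a slick way of phrasing the paper's step ``$\gamma$ simple, connected, and $\sigma$-symmetric $\Rightarrow$ exactly two intersections.''

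There is, however, one genuine gap: you never rule out $\infty \in \gamma$. The fixed-point set of $\sigma$ on $\CHAT$ is $\RR \cup \{\infty\}$, so the two fixed points of $\sigma|_\gamma$ lie in $\gamma \cap (\RR \cup \{\infty\})$, and your conclusion $|\gamma \cap \RR| = 2$ requires separately that $\infty \notin \gamma$. This is not automatic. For $r \in (B,C]$ or $r \in (C,+\infty)$, $\infty$ is a simple pole, and for $r = -\infty$ it is a zero, so it is critical in those cases; but for $r \in (-\infty, A)$, $\infty$ is a regular point (one can check in the chart $w = 1/z$ that the local coefficient of $q_r$ tends to a positive constant there), and the horizontal trajectory through $\infty$ runs along $\RR \cup \{\infty\}$. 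You then need to invoke your own paragraph-1 argument at $\infty$ (in the chart $w=1/z$) to exclude $\gamma \ni \infty$; the paper handles this explicitly at the end of its proof.

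On the third paragraph: the paper rules out $\gamma \cap \RR = \varnothing$ with a cleaner count. By the Jordan--Schoenflies theorem $\gamma$ bounds two disks $D_1, D_2$; doubling $D_j$ across its closed-trajectory boundary produces a quadratic differential on the sphere, so each $D_j$ must contain exactly two net poles (poles minus zeros, counted with multiplicity). But if $\gamma$ lay in the open upper half-plane, the disk it bounds there would contain at most the single simple pole $W$ -- contradiction. Your critical-graph version is aiming at the same contradiction, and it is essentially right (the inner end of the cylinder $C$ would have to be a closed walk in the critical graph inside a disk whose only singularity is the $1$-prong pole $W$, which is impossible), but it is tangled. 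In particular, the cycle $L$ you introduce sits inside the disk bounded by $\gamma$, hence in the \emph{open} upper half-plane, so the whole case analysis around $r \in L$ and the $\sigma$-symmetry of prongs at $r$ is vacuous -- $r$ is real and cannot be a vertex of $L$ at all, which already gives the contradiction (a cycle must have a vertex of valence at least $2$, hence a zero, yet $D_2$ contains no zeros). I would recommend replacing this paragraph with the Euler--Poincaré count, which also has the benefit of being uniform in $r$ and not requiring the preliminary (and itself non-obvious) claim that the entire cylinder $C$, not just its core $\gamma$, lies in the open upper half-plane.
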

\begin{proof}
Let $\gamma$ be a regular horizontal trajectory of $q_r$. By \lemref{lem:cylinders}, $\gamma$ is closed, and is simple (every trajectory is).  By the Jordan--Schoen\-flies theorem, $\gamma$ divides the sphere into two topological disks $D_1$ and $D_2$. By doubling $D_j$ across its boundary, we see that the number of poles minus the number of zeros of $q_r$ inside $D_j$ is equal to $2$ counting multiplicity (this also follows from the Euler--Poincaré formula). This implies that $\gamma$ intersects $\RR\cup \{ \infty\}$. Otherwise, it would be contained in the upper or the lower half-plane and thus the disk that it bounds in that half-plane would contain at most one simple pole ($W$ or $\wbar{W}$).

Since $\RR\cup \{ \infty\}$ disconnects the sphere, any simple closed curve that intersects it transversely must intersect it another time. Since $q_r$ is real along $\RR$, regular trajectories that intersect $\RR$ are either contained in $\RR$ or intersect it transversely (where $q_r$ is negative). Since $\RR\cup \{ \infty\}$ itself is not a closed horizontal trajectory (and does not contain any), we conclude that $\gamma$ intersects $\RR\cup \{ \infty\}$ at least twice.

The other important observation is that $\gamma$ is symmetric about the real axis. Indeed, since $q_r$ is invariant under complex conjugation, the reflection $\overline{\gamma}$ is a horizontal trajectory passing through the same points as $\gamma$ on the real axis, but since there is only one horizontal trajectory through any regular point of $q_r$ we deduce that $\overline{\gamma} = \gamma$. Since $\gamma$ is simple and connected, it follows that $\gamma \cap (\RR\cup \{ \infty\})$ contains only two points. 

Finally, note that $\gamma$ does not pass through $\infty$ because $\infty$ is either a critical point of $q_r$ or a regular point where the horizontal trajectory passes through along $\RR$ rather than transverse to $\RR$ (this happens when $r\in(-\infty,A)$, in which case $f_r$ is positive near $\pm \infty$ on the real line).
\end{proof}

This lemma implies that each horizontal cylinder of $q_r$ intersects $\RR$ in a pair of intervals. Furthermore, these two intervals have the same length with respect to the metric $\sqrt{|q_r|}$ since this measures the height of the cylinder. We can now state our criterion to find $r_0$.

\begin{lem}\label{lem:transverse_measure}
If $r \in [-\infty, A)$, then $r=r_0$ if and only if
\[
\int_r^A \sqrt{|q_r|} = \int_{B}^{C} \sqrt{|q_r|}.
\]
If $r \in (B,C]$, then $r = r_0$ if and only if
\[
\int_{-\infty}^A \sqrt{|q_r|} = \int_{B}^r \sqrt{|q_r|}.
\]
If $r \in (C,+\infty)$, then $r=r_0$ if and only if
\[
\int_{-\infty}^A \sqrt{|q_r|} = \int_{B}^{C} \sqrt{|q_r|}.
\]
\end{lem}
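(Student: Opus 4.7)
The plan is to interpret each integral in the statement as a transverse measure of a vertical interval of $\RR$ and to express this measure in terms of the heights of the Jenkins--Strebel cylinders of $q_r$. Let $I_1, I_2$ denote the two vertical intervals of $\RR$ in the case at hand (those on which $f_r<0$). By Lemma~\ref{lem:twice}, each regular horizontal trajectory of $q_r$ meets $\RR$ at exactly two points, so each horizontal cylinder $C_i$ of $q_r$ has exactly two transverse sides on $\RR$, distributed between $I_1$ and $I_2$. If $n_{ij}\in\{0,1,2\}$ denotes the number of such sides of $C_i$ lying in $I_j$ (so $n_{i1}+n_{i2}=2$) and $h_i$ denotes the height of $C_i$, then $\int_{I_j}\sqrt{|q_r|} = \sum_i n_{ij}\,h_i$.

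For the forward direction, when $r=r_0$ the differential $q_r = q_\beta$ has a single cylinder bounded by the tree $T_1=[A,B]$ and a tripod (or $4$-star) $T_2$ centred at $r$ by Lemma~\ref{lem:unique_zero}; the singularities in $T_1$ and $T_2$ that meet $\RR$ lie on opposite sides of $\beta$, which forces the single cylinder to have one transverse side in each vertical interval, so both integrals equal the height of the cylinder. For the reverse direction, suppose $r\neq r_0$. By the uniqueness of $q_\beta$ in Jenkins's theorem and Lemma~\ref{lem:cylinders}, $q_r$ has exactly two cylinders, and by the prong count of Lemma~\ref{lem:unique_zero} the only alternative to the tripod or $4$-star configuration is for the two non-real prongs at $r$ to form a self-loop at $r$ and for the remaining prong at $W$ to connect directly to $\wbar W$ via an arc $\eta$. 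The self-loop partitions the sphere into two disks, each of which must contain a simple pole of $q_r$ in order to support a Jenkins--Strebel cylinder; a short topological analysis then pins down the crossing point $x_0$ of the self-loop with $\RR$ to a specific one of the two vertical intervals. In the resulting configuration, one cylinder (the one homotopic to $\beta$) has a $(1,1)$ distribution of its transverse sides between $I_1$ and $I_2$, while the other cylinder has a $(0,2)$ distribution (both sides in the vertical interval containing $x_0$). The integrals therefore equal $h_\beta$ and $h_\beta + 2\,h_{\text{other}}$ respectively and differ by the positive quantity $2\,h_{\text{other}}$.

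The main obstacle is the topological case analysis in the converse direction, specifically ruling out the ``concentric'' two-cylinder configuration in which both cylinders are homotopic to $\beta$ (giving equal $(1,1)$ distributions and hence equal integrals). This is excluded because such a $q_r$ would be a Jenkins--Strebel differential all of whose cylinders are homotopic to $\beta$; by the uniqueness part of Jenkins's theorem applied to $\beta$, it would coincide with $q_\beta$, which has only one cylinder, contradicting the two-cylinder assumption.
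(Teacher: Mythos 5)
Your proof attacks the converse direction (``$r\neq r_0\Rightarrow$ the integrals differ'') quite differently from the paper. The paper's argument is a short homotopy-theoretic one: any closed trajectory of $q_r$ meeting both vertical intervals bounds a disk containing only the poles $A$ and $B$ (the Euler--Poincar\'e count rules out $W,\wbar W$ being inside, since there are too few zeros), hence is homotopic to $\beta$ and therefore lies in $U_1$; so every trajectory of $U_2$ crosses only one vertical interval, and the measures differ by $2H_2$. You instead try to determine the full combinatorics of the critical graph (self-loop at $r$, separate arc $\eta$ from $W$ to $\wbar W$) and read off the $(1,1)$ vs.\ $(0,2)$ distribution of transverse sides from it.

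The gap is precisely where you wave at ``a short topological analysis.'' Determining that the self-loop's crossing point $x_0$ lies in a single vertical interval, which poles sit inside the loop, and that the two cylinders then have the claimed $(1,1)$ and $(0,2)$ distributions is the real content of the argument, and it is not short: it requires a Gauss--Bonnet/Euler--Poincar\'e count on each of the two disks cut off by the self-loop (with the correct flat interior angle at $r$, which is either $\pi$ or $2\pi$ depending on which disk you are in, since the cone angle at a simple zero is $3\pi$), and a separate check for each of the three parameter ranges, since the singularity at $r$ and the status of $\infty$ change. Also note that \lemref{lem:unique_zero} concerns $q_\beta=q_{r_0}$ specifically, not an arbitrary $q_r$, so citing it for the prong count of a general $q_r$ is a mismatch (though the underlying fact about prongs at a simple or double zero is of course general). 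If you carry out that analysis carefully the route does work, but the paper's argument sidesteps all of it. One thing you do add that is worth keeping: your Jenkins-uniqueness argument ruling out the ``concentric'' configuration is exactly what justifies the paper's terse assertion that a trajectory homotopic to $\beta$ must lie in $U_1$ rather than $U_2$.
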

\begin{proof}
We only prove the first statement, the other two being very similar. We thus suppose that $r \in [-\infty, A)$, in which case $f_r$ is negative on \[(r,A) \cup (B, C)\] and positive elsewhere on the real axis (minus the singularities). 

Suppose that $r=r_0$. Then $q_r$ has only one horizontal cylinder $U$. By \lemref{lem:twice}, $U$ intersects $\RR$ in two intervals $I_1$ and $I_2$, necessarily contained in $(r,A) \cup (B, C)$. On the other hand, we know from the proof of \lemref{lem:unique_zero} that the only critical horizontal trajectories of $q_\beta=q_r$ are $(A,B)$,  $(C , +\infty) \cup [-\infty, r)$, and the horizontal trajectories from $r$ to $W$ and $\overline{W}$. Hence every horizontal trajectory passing through $(r,A) \cup (B, C)$ is regular, thus belong to $U$. This means that \[I_1 \cup I_2 = (r,A) \cup (B, C)\] and thus $I_1 = (r,A)$ and $I_2 = (B, C)$ up to relabelling. The height of the cylinder $U$ is the length of either $I_1$ or $I_2$ with respect to $\sqrt{|q_r|}$, so that
\[
\int_{I_1} \sqrt{|q_r|} = \int_{I_2} \sqrt{|q_r|}.
\]

Now suppose that $r \neq r_0$, so that $q_r$ has two horizontal cylinders $U_1$ and $U_2$. By the proof of \lemref{lem:cylinders}, one of these two cylinders, say $U_1$, surrounds $[A,B]$. We now claim that all the horizontal trajectories in $U_2$ intersect only one of $(r,A)$ or $(B, C)$. Indeed, if a closed horizontal trajectory $\gamma$ intersects both intervals, then it bounds a disk $D$ that contains the poles $A$ and $B$ but not $C$, and is symmetric about the real line. If $D$ contains one of $W$ or $\overline{W}$, then it also contains the other one by symmetry. In that case, $q_r$ must have two zeros in $D$ counting multiplicity by the Euler--Poincaré formula. However, $q_r$ only has one zero in $\CHAT$ when $r \in [-\infty, A)$ and this zero is simple. It follows that $\gamma$ is homotopic to $\beta$ in $\CC \setminus \{ A,B,C,W, \overline{W}\}$, hence is contained in $U_1$, which proves the claim.

By \lemref{lem:twice}, $U_j \cap ((r,A) \cup (B, C))$ is a union of two disjoint vertical segments $v_j^\pm \subset U_j$, for each $j \in \{1,2\}$. Moreover, the union of these four segments is equal to $(r,A) \cup (B, C)$ minus a finite number of points belonging to critical horizontal trajectories of $q_r$. By the previous paragraph, $(r,A)$ and $(B, C)$ each contain one of $v_1^\pm$ while $v_2^\pm$ are both contained in the same interval. Therefore, the length of one of the intervals $(r,A)$ or $(B, C)$ with respect to $\sqrt{|q_r|}$ is equal to the height of $U_1$ while the length of the other interval is equal to the height of $U_1$ plus twice the height of $U_2$. In particular,
\[
\int_r^A \sqrt{|q_r|} \neq \int_{B}^{C} \sqrt{|q_r|}
\]  
as required.
\end{proof}

The next proposition proceeds to gives a computable formula for the extremal length of $\beta$ when the parameters $A$, $B$, $C$, and $W$ are such that the first possibility occurs in the previous lemma. There are analogous formulas in the other cases as well, but this is the case that will occur for us. To simplify notation, we write
\begin{equation} \label{eq:integral}
I_r(l,u) : = \int_l^u \sqrt{|q_r|}
\end{equation}
and
\begin{equation}
J_r(l,u) : = \int_l^{+\infty} \sqrt{|q_r|} + \int_{-\infty}^{u} \sqrt{|q_r|}
\end{equation}
in the statement below.

\begin{prop} \label{prop:formulas} 
Suppose that $r_0 \in [-\infty, A)$ and let \[E = \el(\beta, \CC \setminus \{ A, B, C, W, \wbar{W} \}).\] Then
\[
E = \inf_{r<A} \frac{2I_r(A,B)}{\min\left( I_r(r,A) , I_r(B,C) \right)}
\]
and
\[
E = \sup_{r<A} \frac{4I_r(A,B)}{I_r(r,A) + I_r(B,C) + \frac{J_r(C,r)}{I_r(A,B)}| I_r(r,A) - I_r(B,C)|},
\]
where the infimum and supremum is achieved if and only if $r=r_0$.
\end{prop}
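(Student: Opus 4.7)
The plan is to obtain the infimum formula as an upper bound on $E$ by embedding annuli, and the supremum formula as a lower bound by using $\sqrt{|q_r|}$ as a test conformal metric, and to show that both bounds coincide with $E$ exactly when $r=r_0$. For the upper bound, the third part of Jenkins's theorem gives $E \leq \el(\calA)$ for any embedded annulus $\calA$ with core homotopic to $\beta$, with equality iff $\calA$ agrees almost everywhere with the extremal annulus. I would take $\calA = U_1$, the cylinder in the horizontal foliation of $q_r$ that surrounds $[A,B]$; by \lemref{lem:cylinders} such a cylinder always exists, and its core is homotopic to $\beta$. Its circumference equals $L_1 = 2\, I_r(A,B)$, since one boundary of $U_1$ is the saddle connection $[A,B]$ traversed once from each side. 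The argument in the proof of \lemref{lem:transverse_measure} identifies the height as $h_1 = \min(I_r(r,A),\, I_r(B,C))$: whichever of the two vertical intervals is shorter is a full vertical cross-section of $U_1$, while the other has length $h_1 + 2h_2$, with $h_2$ the height of the potential second cylinder. This gives $E \leq L_1/h_1$, exactly the infimum expression, with equality iff $q_r$ is the Jenkins--Strebel differential of $\beta$, iff $r = r_0$.

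For the lower bound, $E \geq \ell([\beta], \sqrt{|q_r|})^2/\area(q_r)$ directly from the definition of extremal length. Since $\beta$ is homotopic to the core of the Jenkins--Strebel cylinder $U_1$, the shortest representative is a closed horizontal trajectory of length $L_1 = 2I_r(A,B)$. The total area decomposes as $\area(q_r) = L_1 h_1 + L_2 h_2$, where $h_2 = |I_r(r,A) - I_r(B,C)|/2$ by the same dichotomy as above. The remaining input is the circumference $L_2 = 2\bigl(I_r(A,B) + J_r(C,r)\bigr)$ of the second cylinder: one of its boundary components traces once around the combined critical subgraph consisting of $[A,B]$ together with the saddle connection from $r$ to $C$ through $\infty$, traversing each arc once from each side. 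Substituting into $L_1^2/(L_1h_1 + L_2h_2)$ and simplifying produces the supremum expression exactly, and equality holds iff $\sqrt{|q_r|}$ is (up to positive scaling) the extremal metric, iff $r = r_0$.

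The main obstacle is the formula for $L_2$, which rests on a detailed description of the critical graph of $q_r$ when $r\ne r_0$. Following the prong-counting strategy of \lemref{lem:unique_zero}, the leftward prong from $r$ still reaches $C$ through $\infty$; the two non-real prongs from $r$ cannot both terminate at $W,\overline{W}$ (otherwise $q_r$ would have a single cylinder and be the Jenkins--Strebel for $\beta$, forcing $r=r_0$), and the simple poles $A,B,C$ have no free prong left to receive them, so by symmetry they must form a conjugate pair of saddle connections looping from $r$ back to itself; correspondingly, $W$ and $\overline{W}$ are joined by a separate saddle connection crossing the real axis. Analyzing the complement of this critical graph then reveals the two cylinders $U_1$ and $U_2$, and tracing the outer boundary of $U_2$ yields the claimed circumference; once this is in place, the remaining verification is routine algebra.
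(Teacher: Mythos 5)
Your overall strategy matches the paper's: upper bound from the embedded cylinder $U_1$ via the third part of Jenkins's theorem, lower bound from the test metric $\sqrt{|q_r|}$, equality in both at $r=r_0$ by uniqueness. The upper-bound half is fine and essentially identical to the paper.

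The lower-bound half has a genuine gap in the claimed identity $L_2 = 2\bigl(I_r(A,B) + J_r(C,r)\bigr)$, i.e.\ $C_2 = C_1 + 2J_r(C,r)$. This is \emph{not} always true. When $r\ne r_0$ the critical graph has three components: $T_1=[A,B]$, the saddle connection $\eta$ from $W$ to $\overline W$, and a lollipop $T_3 = L\cup P$ where $L$ is a loop through the zero $r$ and $P$ is the path from $r$ through $\infty$ to $C$. Your description of $T_3$ is right, but the loop $L$ crosses exactly one of the two vertical intervals $(r,A)$ or $(B,C)$, and which one it crosses determines whether $P$ sits on the $U_1$-side or the $U_2$-side of $L$. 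If $L$ crosses $(B,C)$ (which happens when $I_r(B,C) > I_r(r,A)$, i.e.\ $r>r_0$), then $U_1$ is inside $L$, $C_1 = |L|$, $C_2 = |L| + 2|P|$, and your formula holds. But if $L$ crosses $(r,A)$ (when $I_r(r,A) > I_r(B,C)$, i.e.\ $r<r_0$), then $U_2$ is the one trapped inside $L$, $C_2 = |L|$, $C_1 = |L| + 2|P|$, and $C_2 = C_1 - 2J_r(C,r) < C_1$, the opposite of what you asserted. Both cases do occur as $r$ ranges over $[-\infty,A)$. So the sentence ``the total area decomposes as $\area(q_r)=L_1 h_1 + L_2 h_2$'' with your $L_2$ is an overestimate of the area for $r<r_0$, not an identity. (Relatedly, the boundary circle of $U_2$ never ``traces around $[A,B]$ together with $P$'': $[A,B]$ lies in a different component of the critical graph; in the favorable case it merely happens that $|L| = 2|[A,B]|$.)

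The paper sidesteps this entirely by not pinning down $C_2$: it uses only $|C_1-C_2| = 2J_r(C,r)$ and the trivial bound $C_2 \le C_1 + |C_1-C_2|$, which is valid in both cases and yields the same supremum expression after simplification. Your final formula is therefore still a correct lower bound (overestimating the area only weakens the bound, and at $r=r_0$ the $H_2$ term vanishes so the bound is still attained), but the derivation as written is not sound. To repair it, replace the claimed equality for $L_2$ with the one-sided bound $C_2 \leq C_1 + 2J_r(C,r)$, justified as in the paper by observing that $P$ lies on the boundary of whichever cylinder has the larger circumference, contributing $2|P|$ to the difference.
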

\begin{proof}
Let $r\in [-\infty, A)$ and let $U_1$ be the horizontal cylinder of $q_r$ surrounding $[A,B]$. The height of $U_1$ is 
\[
H_1 = \min\left( \int_r^A \sqrt{|q_r|},\int_{B}^{C} \sqrt{|q_r|} \right)
\]
by the end of the proof of \lemref{lem:transverse_measure}, while its circumference is $C_1 = 2\int_A^B \sqrt{|q_r|}$ because $[A,B]$ is an end of $U_1$, so there is a sequence of closed horizontal trajectories in $U_1$ which converge to the segment $[A,B]$ traced twice.

We thus have
\[
E \leq \el(U_1) = \frac{2\int_A^B \sqrt{|q_r|}}{\min\left( \int_r^A \sqrt{|q_r|},\int_{B}^{C} \sqrt{|q_r|} \right)}=\frac{2I_r(A,B)}{\min\left( I_r(r,A) , I_r(B,C) \right)}.
\]
By Jenkins's theorem, equality is achieved if and only if $U_1$ is the horizontal cylinder of $q_\beta = q_{r_0}$, which happens only if $r = r_0$.

On the other hand, for every $r\in [-\infty, A)$ we can use the conformal metric $h_r$ with area form $|q_r|$ in the definition of the extremal length of $\beta$. As observed above, the horizontal trajectories in $U_1$ are homotopic to $\beta$ and have length $C_1=2 I_r(A,B)$. Since closed trajectories minimize length in their homotopy class, we get $\ell([\beta],h_r) = C_1$. It remains to find an upper bound on $\area(h_r)$. 

Suppose that $r \neq r_0$ and let $U_2$ be the other horizontal cylinder of $q_r$. Let $H_2$ be the height of $U_2$ and $C_2$ its circumference. Recall from the proof of \lemref{lem:transverse_measure} that the shorter of the two intervals $(A,r)$ and $(B,C)$ with respect to $h_r$ has length $H_1$ and the other one has length $H_1+2H_2$, so that
\[
H_2 = \frac{\left| \int_r^A \sqrt{|q_r|} -\int_{B}^{C} \sqrt{|q_r|} \right|}{2}.
\]

Observe that $(C,+\infty)\cup [-\infty,r)$ is a horizontal trajectory of $q_r$ en\-ding in a $1$-prong at $C$, but there are two other horizontal trajectories $\gamma^\pm$ incident to the simple zero at $r$, symmetric about the real line. These trajectories must meet the real axis elsewhere for otherwise they would terminate at the simple poles $W$ and $\overline{W}$, but in that case $(A,B)$,  $(C,+\infty)\cup [-\infty,r)$,  $\gamma^+$, and $\gamma^-$ account for all the critical horizontal trajectories, yet the complement of the union of their closures is ho\-meo\-morphic to a cylinder, so that $r=r_0$. We conclude that  $\gamma^+$ and $\gamma^-$ intersect at some regular point of $q_r$ on the real axis, so they coincide and form the outer boundary $\gamma$ of one of $U_1$ or $U_2$. Now, $[C,+\infty)\cup [-\infty,r]\cup \gamma$ is a connected component of the critical graph, so it is an end of the other horizontal cylinder, whose closed horizontal trajectories limit onto this set with the arc $[C,+\infty)\cup [-\infty,r]$ traced twice and $\gamma$ traced once. It follows that these trajectories have length 
\[
\max(C_1,C_2) = 2\left(\int_{-\infty}^r \sqrt{|q_r|} + \int_{C}^{+\infty} \sqrt{|q_r|} \right) + \min(C_1,C_2), 
\]
so that
\[
|C_1-C_2| = 2\left(\int_{-\infty}^r \sqrt{|q_r|} + \int_{C}^{+\infty} \sqrt{|q_r|} \right).
\]

We then estimate 
\begin{align*}
\area(h_r) &= C_1 H_1 + C_2 H_2 \\
& \leq C_1 H_1 + (C_1 + |C_1-C_2|)H_2 \\
& = C_1(H_1 + H_2) + |C_1-C_2| H_2 \\
& = 2 I_r(A,B) \left(\frac{I_r(r,A)+I_r(B,C)}{2}\right) \\
& \quad + J_r(C,r)|I_r(r,A)-I_r(B,C)|
\end{align*}
and this remains valid if $r=r_0$ by continuity.

The definition of extremal length then yields
\begin{align*}
E &\geq \sup_{r<A} \frac{\ell([\beta],h_r)^2}{\area(h_r)} \\
&\geq \sup_{r<A} \frac{4I_r(A,B)}{I_r(r,A) + I_r(B,C) + \frac{J_r(C,r)}{I_r(A,B)}| I_r(r,A) - I_r(B,C)|}.
\end{align*}
If $r=r_0$, then $I_r(r,A) = I_r(B,C)$ by \lemref{lem:transverse_measure} so that the last supremand simplifies to $\displaystyle \frac{2I_r(A,B)}{I_r(r,A)}$, which is the ratio of circumference to height of the horizontal cylinder of $q_{\beta}$, hence equal to $E$. Conversely, if the last supremand is equal to $E$ for some $r$, then we get $\displaystyle E = \frac{\ell([\beta],h_r)^2}{\area(h_r)}$ as well, hence $q_r = q_\beta$ by the uniqueness part of Jenkins's theorem, so that $r = r_0$.
\end{proof}

\subsection{Numerical results}  \label{subsec:numerical}

We finally specialize to the case $A = 0$, $B = \rho^2=5 - 2\sqrt{6}$, $C = 1/\rho^2 = 5+2\sqrt{6}$ and $\displaystyle W = \omega^2= \frac{-1+i2\sqrt{2}}{3}$ (where $\rho$ and $\omega$ are as in \lemref{lem:edge_configuration}) and estimate the extremal length of $\beta$ in this specific case.

We first show that $r_0$ is indeed negative in this case and give an estimate for its value. To that end,  for $r<0$ define 
\[\delta(r) := \int_{r}^0 \sqrt{|q_r|} - \int_{\rho^2}^{1/\rho^2} \sqrt{|q_r|}.\]
This function is clearly continuous, so to prove that it vanishes somewhere, it suffices to show that it changes sign somewhere. To do this, we use interval arithmetic to estimate integrals rigorously and thereby give certified bounds on $r_0$.

Recall that $q_r = f_r(z)dz^2$ where
\begin{align*}
f_r(z) &= \frac{z-r}{(z-A)(z-B)(z-C)(z-W)(z-\overline{W})} \\
& = \frac{z-r}{z(z-B)(z-C)(z^2-2\re(W)z + |W|^2)} \\
& = \frac{z-r}{z(z-B)(z-C)(z^2+(2/3)z + 1)} \\
& = \frac{z-r}{z(z^2-10z+1)(z^2+(2/3)z + 1)}
\end{align*}
when $r<0$. Up to a factor of $3$, this is the formula that appears in \thmref{thm:main} stated in the introduction. We can now estimate $r_0$.

\begin{lem} \label{lem:r_0}
We have $\delta(-9.0795)>0>\delta(-9.0791)$ and hence \[r_0 \in (-9.0795,-9.0791).\]
\end{lem}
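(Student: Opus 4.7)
The plan is to compute certified numerical enclosures of $\delta(-9.0795)$ and $\delta(-9.079)$ via interval arithmetic, verify the claimed signs, and then invoke the intermediate value theorem together with the uniqueness statement from the corollary following \lemref{lem:unique_zero} to conclude that $r_0$ lies in $(-9.0795,-9.079)$.

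First I would observe that $\delta$ is continuous on $(-\infty,0)$. Indeed, $f_r(z)$ is jointly smooth in $(r,z)$ away from its poles, and the two integrals defining $\delta(r)$ have only integrable square-root type singularities at the endpoints $r$, $0$, $\rho^2$, and $1/\rho^2$. A change of variable of the form $z = r + t^2$ near $z=r$ (and analogously $z = -s^2$ near $z=0$, and similar substitutions at $\rho^2$ and $1/\rho^2$) converts each of these local contributions to the integral of a smooth function of a real parameter on a closed interval, after which continuity in $r$ follows from dominated convergence.

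To evaluate each of the four one-sided integrals rigorously, I would apply the same changes of variable as above: this produces integrands that are real-analytic on compact intervals and bounded away from all the poles $W$, $\overline W$, $C$ (for the first integral) and $0$, $r$, $W$, $\overline W$ (for the second). On each such interval, SageMath's ball-arithmetic quadrature (e.g.\ \texttt{CBF.integral} based on Gauss--Legendre with rigorous error control) returns a true enclosure for the integral. Summing the enclosures for the two pieces of each integral gives intervals containing $\int_r^0\sqrt{|q_r|}$ and $\int_{\rho^2}^{1/\rho^2}\sqrt{|q_r|}$, hence an interval containing $\delta(r)$. The ancillary file \texttt{integrals} would carry out this computation and verify that the resulting enclosure of $\delta(-9.0795)$ lies entirely in $(0,\infty)$ while the enclosure of $\delta(-9.079)$ lies entirely in $(-\infty,0)$.

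By the continuity of $\delta$ and the intermediate value theorem there exists $r^\ast\in(-9.0795,-9.079)$ with $\delta(r^\ast)=0$. By the first case of \lemref{lem:transverse_measure}, $q_{r^\ast}=q_\beta$, and by the uniqueness of the parameter in the corollary following \lemref{lem:unique_zero}, $r^\ast = r_0$. Hence $r_0 \in (-9.0795,-9.079)$. The main obstacle will be implementing the singularity-removing substitutions carefully enough that the interval quadrature does not blow up: if done naively on $[r,0]$ directly, the $(z-r)^{-1/2}$ factor produces unbounded integrand balls and useless enclosures, so the split-and-substitute step is essential for the certified numerics to succeed.
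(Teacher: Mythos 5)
Your overall strategy matches the paper's: continuity of $\delta$, certified sign checks at two points, then the intermediate value theorem combined with \lemref{lem:transverse_measure} to pin down $r_0$. Where you genuinely diverge is in how you handle the improper integrals for the \texttt{Arb} quadrature. The paper does not desingularize: it \emph{truncates} the domain at a distance $\eps=10^{-12}$ from each singularity, uses \texttt{Arb} only on the truncated (nonsingular) middle pieces, and bounds the discarded endpoint contributions by hand with explicit elementary inequalities such as $|z-r|<B+\eps-r$ etc.\ for $z\in(B,B+\eps)$; crucially, it only ever needs one-sided bounds (e.g.\ discarding $(r,r+\eps)\cup(-\eps,0)$ already gives a lower bound on $I_r(r,0)$, which is the direction needed for $\delta>0$, and upper bounds on the tail pieces of $I_r(B,C)$). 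You instead propose a square-root substitution $z=r+t^2$, $z=-s^2$, etc.\ to remove the $|z-r|^{1/2}$ and $|z|^{-1/2}$ singularities, after which the whole integral is handled uniformly by \texttt{Arb} and yields two-sided enclosures. Both routes work. Your approach is conceptually cleaner and avoids the ad-hoc tail estimates, but it requires implementing the desingularized integrand as a holomorphic function on complex balls (in particular choosing the correct square-root branch of $-g(r+t^2)$ for $g(z)=z(z^2-10z+1)(z^2+\tfrac23 z+1)$, using that $g<0$ on $(r,0)\cup(B,C)$), which your sketch does not address explicitly; the paper's truncation approach sidesteps this by only feeding \texttt{Arb} integrands on intervals well away from all singularities, at the cost of extra elementary estimates. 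Your closing step could be streamlined: the ``if and only if'' in the first case of \lemref{lem:transverse_measure} already gives $r^\ast=r_0$ directly, without needing to separately cite the uniqueness corollary.
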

\begin{proof}
The \texttt{Arb} package in \texttt{SageMath} can evaluate integrals of analytic functions up to certified error bounds using interval arithmetic. It cannot evaluate improper integrals of the type we are after, so we change the bounds of integration slightly and the estimate the errors by hand.

We write \[\delta(r) = I_r(r,0)-I_r(B,C)\] with $I_r$ as in \eqnref{eq:integral} to simplify notation.

We first seek to bound $\delta$ from below at $r = -9.0795$. Let $\eps > 0$ be small enough so that $r+\eps < -\eps$, $\eps < B-\eps$, and $B+\eps < C-\eps$ (we take $\eps = 10^{-12}$ in our program).  We have $I_r(r,0) > I_r(r+\eps,-\eps)$ and the latter integral can be bounded below rigorously with the \texttt{Arb} package. We also have
\[
I_r(B,C) = I_r(B,B+\eps)+I_r(B+\eps,C-\eps)+I_r(C-\eps,C).
\]
The middle term is handled with the \texttt{Arb} package, while the other two terms can be estimated as follows.

Recall that
\[
I_r(l,u) = \int_l^u \sqrt{|f_r(z)|}dz = \int_l^u \sqrt{\frac{|z-r|}{|z||z-B||z-C||z^2+(2/3)z + 1|}}dz .
\]

For $z \in (B,B+\eps)$ we have 
\[
|z-r| < B+\eps-r, \quad |z|>B, \quad \text{and} \quad |z-C|>C-B-\eps.
\]
Also note that the restriction of $z^2 +(2/3) z + 1$ to the real line attains its minimum at $-1/3$, where it takes the value $8/9$. In particular, it is positive. Since \[-1/3 < B < B+\eps,\] we have \[|z^2 +(2/3) z + 1| > B^2 +(2/3) B + 1\] if $z \in (B,B+\eps)$. This means that
\[
|f_r(z)| < \frac{B+\eps-r}{B(z-B)(C-B-\eps)(B^2 +(2/3)B + 1)}
\]
on that interval, which leads to the estimate
\begin{align*}
I_r(B,B+\eps) & < \frac{\sqrt{B+\eps-r}}{\sqrt{B(C-B-\eps)(B^2 +(2/3) B + 1)}}\int_B^{B+\eps} \frac{1}{\sqrt{z-B}}dz \\
&= \frac{2\sqrt{\eps(B+\eps-r)}}{\sqrt{B(C-B-\eps)(B^2 +(2/3) B + 1)}}
\end{align*}
upon taking the square root and integrating.

 Similarly, we have
\[
I_r(C-\eps,C) < \frac{2\sqrt{\eps (C-r)}}{\sqrt{(C-\eps)(C-\eps-B)((C-\eps)^2 +(2/3) (C-\eps) + 1)}}.
\]
Putting these estimates together at $r = -9.0795$ yields that \[\delta(-9.0795) > 2.6 \times 10^{-6} > 0.\]
These numerical calculations are contained in the ancillary file \href{https://arxiv.org/src/2404.00336/anc}{\texttt{integrals}}.

We can bound $\delta$ from above at $r = -9.0791$ in a similar way, using that $I_r(B,C)> I_r(B+\eps,C-\eps)$ and \[ I_r(r,0)=I_r(r,r+\eps)+I_r(r+\eps,-\eps)+I_r(-\eps,0)\]
where $\eps = 10^{-12}$. The integrals $I_r(B+\eps,C-\eps)$ and $I_r(r+\eps,-\eps)$ are calculated using interval arithmetic. For $I_r(r,r+\eps)$, we observe that
\[
|f_r(z)| < \frac{z-r}{-(r+\eps)(B-r-\eps)(C-r-\eps)((r+\eps)^2 +(2/3) (r+\eps) + 1)}
\]
as long as $z \in (r,r+\eps)$, where we used that $r+\eps < -1/3$. This yields
\[
I_r(r,r+\eps) < \frac{(2/3) \eps^{3/2}}{\sqrt{-(r+\eps)(B-r-\eps)(C-r-\eps)((r+\eps)^2 +(2/3) (r+\eps) + 1)}},
\]
where the factor $2/3$ comes from the integration of $(z-r)^{1/2}$.

 If  $z \in (-\eps, 0)$, then $|z-B| > B$, $|z-C| > C$, $|z-r|<|r|$ and $|z^2+(2/3)z+1|>\eps^2 -2 \eps /3 +1$ because $-\eps > -1/3$, so that
\[
|f_r(z)| < \frac{|r|}{|z|BC (\eps^2 -2 \eps /3 +1)} = \frac{|r|}{|z|(\eps^2 -2 \eps /3 +1)}
\]
and hence
\[
 I_r(-\eps,0)<\frac{2\sqrt{\eps |r|}}{\sqrt{\eps^2 -2 \eps /3 +1}}.
\]

These lead to the bound
\[
\delta(-9.0791) < -2.36 \times 10^{-6} < 0.
\]

By the intermediate value theorem, $\delta$ has a zero in $(-9.0795,-9.0791)$ and by \lemref{lem:transverse_measure}, that zero is equal to $r_0$.
\end{proof}

In particular, the above lemma proves that $r_0 < 0$, so we can use \propref{prop:formulas} to bound the extremal length of $\beta$. By \corref{cor:reduction_to_6}, this yields bounds on the extremal length of an edge curve on the cube punctured at its vertices.

\begin{cor} \label{cor:upper_bound}
We have \[\el(\beta, \CC \setminus \{0, \rho^2, 1/\rho^2, \omega^2, \wbar{\omega}^2\})\in (1.421585, 1.4215931),\] hence the extremal length of any edge curve on the cube punctured at its vertices is contained in the interval $(2.84317,2.843187)$.
\end{cor}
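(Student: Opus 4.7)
The plan is to invoke \propref{prop:formulas} with $A=0$, $B=\rho^2$, $C=1/\rho^2$, and $W=\omega^2$. The hypothesis $r_0 \in [-\infty, A)$ needed to apply that proposition is guaranteed by \lemref{lem:r_0}, which gives $r_0 \in (-9.0795, -9.079)$. Since both the infimum and supremum formulas in \propref{prop:formulas} equal $E := \el(\beta, \CC \setminus \{0,\rho^2,1/\rho^2,\omega^2,\wbar{\omega}^2\})$, evaluating each at the \emph{same} $r<0$ sandwiches $E$ between the two values. I would pick $r$ very close to $r_0$ (following the hint in the introduction, $r=-9.0792887$) so that $|I_r(r,0) - I_r(\rho^2,1/\rho^2)|$ is tiny, which is precisely what makes the two formulas collapse to essentially the same number, giving a narrow enclosure for $E$. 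Once $E$ is enclosed in an interval of width $<10^{-5}$, doubling via \corref{cor:reduction_to_6} produces the stated bounds for the edge curve.

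The remaining work is to evaluate the quantities $I_r(0,\rho^2)$, $I_r(r,0)$, $I_r(\rho^2,1/\rho^2)$, and $J_r(1/\rho^2,r)$ rigorously in interval arithmetic. I would use the \texttt{Arb}-backed integration in \texttt{SageMath} exactly as in \lemref{lem:r_0}: truncate the integrals by $\eps$-neighborhoods around each simple pole of $q_r$ (where $\sqrt{|f_r|}$ has an integrable $1/\sqrt{|z-p|}$ singularity), estimate the truncated tails by bounding the non-singular factors of $|f_r(z)|$ crudely on the short interval $(p, p+\eps)$ or $(p-\eps, p)$ and integrating $1/\sqrt{|z-p|}$ explicitly, and evaluate the remaining proper integrals with \texttt{Arb} to obtain certified enclosures.

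The one genuinely new feature compared to \lemref{lem:r_0} is the presence of integrals out to $\pm\infty$ inside $J_r(1/\rho^2,r) = \int_{1/\rho^2}^{+\infty}\!\sqrt{|q_r|} + \int_{-\infty}^{r}\!\sqrt{|q_r|}$. Since the numerator of $f_r$ has degree $1$ and the denominator has degree $5$, we have $|f_r(z)| = O(|z|^{-4})$ as $|z|\to\infty$, so $\sqrt{|f_r(z)|} = O(|z|^{-2})$ is integrable near infinity. I would truncate at a large $R$, handle $\int_R^{+\infty}$ and $\int_{-\infty}^{-R}$ by the elementary bound $\sqrt{|f_r(z)|} \leq M(r)/z^2$ for $|z|\geq R$ (with $M(r)$ an explicit constant obtained from factoring the denominator on $|z|\geq R$), and use \texttt{Arb} for the compact part. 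Combined with the truncation near the simple pole at $r$, this gives a rigorous enclosure of $J_r(1/\rho^2,r)$.

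The main obstacle is a matter of calibration rather than mathematics: to reach the precision stated in \corref{cor:upper_bound}, the chosen $r$ must be close enough to $r_0$ (to make the lower bound in \propref{prop:formulas} nearly match the upper bound), and the truncation parameter $\eps$ must be small enough that the hand-estimated tails are much smaller than the gap we are trying to resolve, yet not so small that the \texttt{Arb} integrator struggles near the endpoints. Once these parameters are tuned, plugging the resulting enclosures into
\[
\frac{2I_r(0,\rho^2)}{\min(I_r(r,0),I_r(\rho^2,1/\rho^2))}
\quad\text{and}\quad
\frac{4I_r(0,\rho^2)}{I_r(r,0) + I_r(\rho^2,1/\rho^2) + \frac{J_r(1/\rho^2,r)}{I_r(0,\rho^2)}\bigl|I_r(r,0) - I_r(\rho^2,1/\rho^2)\bigr|}
\]
yields $E \in (1.421585, 1.42159305)$, and multiplying by $2$ via \corref{cor:reduction_to_6} finishes the proof.
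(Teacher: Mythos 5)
Your proposal is correct and follows essentially the same approach as the paper: plug $r = -9.0792887$ into both the infimum and supremum formulas of \propref{prop:formulas}, estimate the resulting integrals rigorously by truncating $\eps$-neighborhoods of the simple poles (bounding the tails by hand via the explicit $1/\sqrt{|z-p|}$ singularity) and feeding the compact remainder to \texttt{Arb}, then double via \corref{cor:reduction_to_6}. The one place you diverge is in treating the improper integrals inside $J_r(C,r) = I_r(C,+\infty) + I_r(-\infty,r)$: you propose truncating at a large $R$ and bounding $\sqrt{|f_r(z)|} \le M(r)/z^2$ on $|z| \ge R$, whereas the paper substitutes $z = 1/y$ (using the identity $BC = 1$ to simplify the resulting integrand), which maps the two infinite rays to the single finite interval $(1/r, B)$ and then applies exactly the same $\eps$-truncation machinery already in use near the other poles. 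Both are legitimate; the paper's substitution is a bit cleaner in that it avoids introducing the extra parameters $R$ and $M(r)$ and reuses code, while your version is more elementary and requires no algebraic identity. Neither variant affects the validity of the enclosure.
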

\begin{proof}
By evaluating the formula in the infimum in \propref{prop:formulas} at \[r=-9.0792887\] (a better estimate for $r_0$ found numerically), we obtain an upper bound for the extremal length of $\beta$. The formula involves three indefinite integrals, which we estimate using similar techniques as in the proof of \lemref{lem:r_0}. These calculations are done in the ancillary file \href{https://arxiv.org/src/2404.00336/anc}{\texttt{integrals}} and yield the bound
\[
E:=\el(\beta, \CC \setminus \{0, \rho^2, 1/\rho^2, \omega^2, \wbar{\omega}^2\}) \leq 1.42159306377875 < 1.4215931.
\]

For the lower bound, we need to estimate 
\[
J_r(C,r) = I_r(C,+\infty) + I_r(-\infty, r)
\]
from above, which is slightly different from previous estimates because the intervals are infinite. However, we can make the change of variable $z=1/y$ to get
\begin{align*}
J_r(C,r) &= \int_{1/r}^{1/C} \sqrt{\frac{|1-ry|}{|1-By||1-Cy||1+(2/3)y +y^2|}}dy \\
& = \sqrt{|r|}\int_{1/r}^{B} \sqrt{\frac{|y-1/r|}{|y-B||y-C||y^2+(2/3)y + 1|}}dy
\end{align*}
(because $BC=1$) and treat this integral like the others. At $r=-9.0792887$, this yields the lower bound
\begin{align*}
E & \geq \frac{4I_r(0,B)}{I_r(r,0) + I_r(B,C) + \frac{J_r(C,r)}{I_r(0,B)}| I_r(r,0) - I_r(B,C)|} \\ &\geq 1.42158510064687 > 1.421585.
\end{align*}

Since the extremal length of an edge curve on the cube is equal to $2E$ by \corref{cor:reduction_to_6}, we can multiply the previous bounds on $E$ by $2$ to estimate its value. This shows that $2E \in (2.84317,2.843187)$.
\end{proof}

Having found good estimates for the extremal length of the edge curves, we next need to show that all other essential simple closed curves on the punctured cube have a larger extremal length.

\section{Geodesic trajectories on the cube} \label{sec:trajectories}

The goal of this section is to prove lower bounds on the extremal length of most essential simple closed curves on $X$, the unit cube punctured at its vertices. We do this by applying the definition of extremal length with the Euclidean metric $h_{\mathrm{flat}}$ on the surface of $X$. Given a homotopy class $c$ of essential simple closed curves, we have
\[
\el(c,X) \geq \frac{\ell(c,{h_{\mathrm{flat}}})^2}{\area(h_{\mathrm{flat}})} = \frac{\ell(c,{h_{\mathrm{flat}}})^2}{6}.
\]
In order to bound $\ell(c,{h_{\mathrm{flat}}})$ from below, a useful fact is that we can pull curves in $c$ tight and then slide them parallel to themselves until they hit a vertex. The resulting curve is on the cube $\overline{X}$ but it is a limit of curves in $X$ that belong to $c$, as shown in \cite[Proposition 2.7]{BolzaEL}.

\begin{lem} \label{lem:geod_rep}
Let $Y$ be a closed Euclidean cone surface $\overline{Y}$ punctured along some finite set $P$. For any homotopy class $c$ of essential simple closed curves in $Y$, there exists a closed curve $\gamma^*$ in $\overline{Y}$ such that $\gamma^* \setminus P$ is geodesic, $\gamma^*$ is the limit of a sequence of simple closed curves in $c$, $\length(\gamma^*,h)=\ell(c,h)$ with respect to the given Euclidean cone metric $h$, and $\gamma^*$ passes through either a point in $P$ or a cone point. 
\end{lem}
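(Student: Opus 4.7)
My plan is to first extract a length-minimizing limit loop by a compactness argument, then (if that loop avoids every singular point) to slide it parallel to itself within a flat cylinder until it hits a cone point or a puncture.

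First, I would take a minimizing sequence $\gamma_n\in c$ parametrized on $S^1$ at constant speed, so that $\length(\gamma_n,h)\to\ell(c,h)$. Because the lengths are bounded, the $\gamma_n$ are uniformly Lipschitz maps into the compact metric space $\overline{Y}$, so Arzelà--Ascoli yields a subsequence converging uniformly to a Lipschitz loop $\gamma_0\from S^1\to\overline{Y}$. Lower semicontinuity of length under uniform convergence gives $\length(\gamma_0,h)\leq\liminf\length(\gamma_n,h)=\ell(c,h)$. On any subarc whose image avoids $P$ and the cone points, $\gamma_0$ must be a straight-line Euclidean geodesic: otherwise one could replace a short segment by its straightening and thereby shorten the approximating $\gamma_n$ by a local perturbation inside $Y$, contradicting minimality. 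Consequently $\gamma_0\setminus P$ is geodesic in $\overline{Y}$.

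Next, I would establish the reverse inequality $\length(\gamma_0,h)\geq\ell(c,h)$ by approximating $\gamma_0$ by curves in $c$. If $\gamma_0$ meets $P$ or a cone point only at finitely many parameter values, I would perturb $\gamma_0$ near each such crossing by a small detour on one of the two local sides of the singular point. The $\gamma_n$ pass uniformly close to these singular crossings, and (after passing to a further subsequence) consistently approach on one side; choosing the detour to match that side produces a simple closed curve in $Y$ homotopic to $\gamma_n$, hence in $c$, whose length converges to $\length(\gamma_0,h)$ as the detour shrinks. This gives $\length(\gamma_0,h)=\ell(c,h)$ and exhibits $\gamma_0$ as a uniform limit of simple closed curves in $c$.

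If $\gamma_0$ already passes through $P$ or a cone point, set $\gamma^*=\gamma_0$ and we are done. Otherwise $\gamma_0$ is a smooth closed geodesic in the flat locus of $\overline{Y}$, so it lies in a maximal open annular neighborhood foliated by pairwise parallel closed geodesics of the same length (obtained by developing a collar of $\gamma_0$ into the Euclidean plane). Since $\overline{Y}$ is compact and has finitely many singular points, this cylinder cannot extend forever in both normal directions, so at least one of its boundaries contains a cone point or a puncture. Translating $\gamma_0$ toward that boundary gives a $1$-parameter family of curves in $c$ of the same length, each being a uniform limit of curves in $c$, and the limiting translate $\gamma^*$ is a closed curve in $\overline{Y}$ with $\length(\gamma^*,h)=\ell(c,h)$ that passes through a cone point or a point of $P$, as required.

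The main obstacle I foresee is the homotopy-class bookkeeping in the second step: when $\gamma_0$ touches several punctures, one must simultaneously choose the correct side of the detour at each crossing so that the perturbed curve lies in $c$, and verify that the $\gamma_n$ can be taken to commit to a single side at each crossing along a suitable subsequence. Simplicity of the $\gamma_n$ is helpful here because it limits how often they can oscillate between the two sides of a singular point.
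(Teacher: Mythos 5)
The paper does not prove this lemma itself; it is cited from \cite[Proposition~2.7]{BolzaEL}, and the surrounding prose only gestures at the underlying ``pull tight, then slide parallel'' idea. Your overall architecture — Arzelà--Ascoli on a constant-speed minimizing sequence, local straightening away from the singular set, and sliding a regular closed geodesic through its maximal flat cylinder until the cylinder boundary is hit — is the natural one and agrees with the paper's informal description.

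The genuine gap, which you flag but do not resolve, is the detour step used to show $\length(\gamma_0,h)\geq\ell(c,h)$ and that $\gamma_0$ is a limit of curves in $c$. ``Choosing a side'' at each singular crossing is not the right invariant: if $\gamma_0$ passes through the same singular point $p$ more than once, the local picture at $p$ is several geodesic rays of $\gamma_0$ meeting at $p$, and what distinguishes nearby homotopy classes in $Y$ is the noncrossing matching of those rays induced by the approximating simple arcs of $\gamma_n$, not a per-crossing binary left/right choice. Two curves uniformly close to $\gamma_0$, each making per-crossing side choices, need not be homotopic in $Y$, precisely because $\gamma_0$ meets $\overline{Y}\setminus Y$. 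To repair this, extract from the $\gamma_n$ an eventually constant noncrossing matching at each singular point along a subsequence (their simplicity makes each matching noncrossing), build $\widetilde{\gamma}_\varepsilon$ realizing the same matchings, and check that $\widetilde{\gamma}_\varepsilon$ and $\gamma_n$ are isotopic inside a fixed small punctured neighborhood of $\gamma_0$. A secondary omission: the lemma asserts that $\gamma^*\setminus P$ is geodesic, and cone points not in $P$ are \emph{not} removed; your straightening argument treats only regular points, so you still need to verify that at such a cone point $\gamma_0$ makes angle at least $\pi$ on each side (which, in particular, forces the cone angle there to be at least $2\pi$), by the same corner-cutting argument applied on the smaller side.
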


Another useful ingredient is the fact, proved in \cite{Davis}, that there is no geodesic trajectory from a vertex to itself on the cube (avoiding all other vertices). This means that the minimal length representative $\gamma^*$ of a homotopy class $c$ provided by \lemref{lem:geod_rep} has to pass through at least two vertices.

This leads us to study geodesic trajectories between distinct vertices on the cube. In trying to classify such trajectories, we rediscovered the follo\-wing phenomenon, first observed in \cite{Troubetzkoy}, which reproves the above cited fact from \cite{Davis} and generalizes easily to the other Platonic solids apart from the dodecahedron (where the statement is false \cite{AthreyaAulicino}).

\begin{thm} \label{thm:rotation}
For any vertex-to-vertex geodesic trajectory on the surface of the regular tetrahedron, cube, octahedron, or icosahedron, there is an order two rotation of the polyhedron that sends the trajectory to itself. In particular, there does not exist a geodesic trajectory from a vertex to itself on these Platonic solids.
 \end{thm}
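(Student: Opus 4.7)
The plan is to unfold the trajectory into the plane and exploit a point symmetry of the resulting picture. Given a vertex-to-vertex geodesic $\gamma$ on one of the four Platonic solids listed in the statement, since $\gamma$ meets no vertex in its interior the ordered sequence of faces it crosses can be unfolded isometrically into a strip in $\RR^2$ in which $\gamma$ becomes a straight segment $\tilde{\gamma}$. Because the faces are congruent equilateral triangles (for the tetrahedron, octahedron, and icosahedron) or congruent squares (for the cube), the unfolded strip sits inside a standard tiling of $\RR^2$ by triangles or squares, and the endpoints of $\tilde{\gamma}$ are vertices of that tiling.

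The first key step is to verify that the midpoint $\tilde m$ of $\tilde\gamma$ is a centre of a $180^\circ$ rotational symmetry of the ambient tiling. For the square tiling, with vertex set $\ZZ^2$, the midpoint of any segment between two lattice points has half-integer coordinates and therefore lies at a tiling vertex, an edge midpoint, or a face centre, all of which are $180^\circ$ rotation centres. For the triangular tiling, a parity check in the basis of lattice vectors shows that the midpoint of any vertex-to-vertex segment is either a tiling vertex or an edge midpoint; these are the only $180^\circ$ rotation centres of that tiling, since face centres admit only a $120^\circ$ symmetry, and the parity analysis rules out the face centres anyway. Since $\gamma$ meets no vertex in its interior, $\tilde m$ cannot be a tiling vertex either, so refolding we conclude that the midpoint $m$ of $\gamma$ on the polyhedron is either the midpoint of an edge (all four solids) or the centre of a square face (cube only).

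In either situation $m$ lies on a $2$-fold rotational symmetry axis of the polyhedron: edge midpoints are fixed by the order-$2$ rotations through pairs of opposite edges, and centres of square faces are fixed by the squares of the order-$4$ face rotations of the cube. Let $R$ denote this $180^\circ$ rotation of the polyhedron. Because $R$ fixes $m$ and rotates by $\pi$, it reverses every tangent vector to the surface at $m$; unique continuation of geodesics then gives $R(\gamma)=\gamma$ as a set, traversed in the opposite direction, and in particular $R$ swaps the two endpoints of $\gamma$. This proves the first assertion.

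For the second assertion, suppose that $\gamma$ were a geodesic trajectory from a vertex $v$ to itself. Then $R(v)=v$, so $v$ would lie on the rotation axis of $R$. For the tetrahedron, cube, and icosahedron this is immediately impossible, since none of their $2$-fold symmetry axes passes through a vertex. The octahedron requires one more case: it does have $2$-fold axes running through pairs of antipodal vertices, namely the squares of its $4$-fold axes, so the axis of $R$ would have to be the line through $v$ and its antipode. This line, however, meets the surface of the octahedron only at those two vertices, which forces $m=-v$ to be a vertex and contradicts the assumption that $\gamma$ avoids vertices in its interior. The main obstacle is thus the midpoint parity analysis in the two tilings, together with the extra case needed for the octahedron in the second part.
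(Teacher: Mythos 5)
Your proof is correct and follows essentially the same strategy as the paper's: unfold the trajectory into the square or triangular tiling, show that the midpoint of the unfolded segment must be a $180^\circ$ rotation centre of the tiling (hence an edge midpoint, or a face centre only for the cube), refold to obtain an order-two rotation of the polyhedron fixing the midpoint and reversing the geodesic, and conclude by noting that the endpoints of the trajectory cannot be fixed by this rotation. The only cosmetic difference is that the paper locates the midpoint by observing that $x\mapsto p+q-x$ is a lattice-preserving rotation of the plane that permutes tiles (so its fixed point is a tile centre, edge midpoint, or vertex), whereas you perform a direct parity computation in lattice coordinates, and the paper excludes the return-to-vertex case by noting that the point antipodal to an edge midpoint or square face centre is again such a point, whereas you enumerate which $2$-fold axes pass through vertices and handle the octahedron separately; both are sound and lead to the same conclusion.
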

 
 \begin{proof}
Let $P$ be one of the four Platonic solids in the statement and let $\gamma$ be a vertex-to-vertex geodesic trajectory on $P$. Denote the interior of $\gamma$ by $\gamma^\circ$, which is disjoint from the vertices of $P$ by hypothesis. Consider the regular tiling of $\RR^2$ by tiles congruent to the faces of $P$ (either squares or equilateral triangles) and let $\Lambda$ be the set of vertices of this tiling. By applying a translation if necessary, we may assume that $\Lambda$ contains the origin, in which case it forms a lattice.
 
Observe that $P$ minus its set of vertices and $\RR^2 \setminus \Lambda$ have the same universal cover $U$ (an infinite tree of squares or triangles with their vertices removed). Thus, we can lift $\gamma^\circ$ to $U$ then project it down to some proper geodesic arc $\alpha$ on $\RR^2 \setminus \Lambda$.
 
There is an order $2$ rotation $f$ of $\RR^2 \setminus \Lambda$ that sends $\alpha$ to itself exchanging its endpoints $p$ and $q$, namely, the map $f(x)=p+q-x$. Since $f$ is a isometry and preserves $\Lambda$, it sends tiles to tiles. In particular, it permutes the tiles containing its fixed point $\overline{x}=(p+q)/2$. If $\overline{x}$ lies in the interior of a tile $T$, then $f$ restricts to a rotation of $T$, so that $\overline{x}$ is equal to the center of $T$. Since $f$ has order $2$, this case can only happen if $T$ is a square, i.e., if $P$ is the cube. If $\overline{x}$ lies in the interior of an edge $E$, then $f$ acts as an order-two rotation on this edge, so it fixes its midpoint. The last possibility is that $\overline{x}$
 is a vertex, but this is ruled out since $\overline{x} \in \alpha\subset \RR^2 \setminus \Lambda$.

This shows that the midpoint $m$ of $\gamma$ on $P$ is the midpoint of an edge or the center of a face (only if $P$ is the cube). Note that $P$ admits a rotation $F$ of order $2$ around $m$ by virtue of being Platonic. Moreover, $F$ sends $\gamma$ to itself because it does so locally around the midpoint.

Suppose that the two endpoints of $\gamma$ coincide. Then $F$ has this endpoint as a fixed point. This is a contradiction since the other fixed point of $F$ besides $m$ is diametrically opposite to $m$, and the point diametrically opposite to the center of an edge is the center of an edge and the point diametrically opposite to the center of a square face is the center of a square face. 
 \end{proof}
 
 We use the proof of the above theorem to deduce the following length estimates for geodesic trajectories on the surface of the unit cube.
 
 \begin{lem} \label{lem:adjacent}
 The shortest geodesic trajectory between two adjacent vertices on the unit cube is the edge connecting them (of length $1$), and the second shortest has length $\sqrt{13}$.
 \end{lem}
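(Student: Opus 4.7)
The edge from $v_1$ to $v_2$ is a geodesic of length $1$, and since every path on the surface of the cube between these two vertices has length at least the Euclidean distance $|v_1 - v_2| = 1$ in the ambient $\RR^3$, it is the shortest. For the second shortest, I would use the standard unfolding: every vertex-to-vertex geodesic trajectory develops to a straight segment in a strip of unfolded unit squares, joining a lift of $v_1$ to a lift of $v_2$, and so has length $\sqrt{a^2 + b^2}$ for some $(a, b) \in \ZZ^2$ with $\gcd(a, b) = 1$ (the coprimality being needed so that the segment avoids intermediate lattice points, which correspond to other cube vertices). The candidate lengths strictly between $1$ and $\sqrt{13}$ are thus $\sqrt{2}$, $\sqrt{5}$, and $\sqrt{10}$.

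To exhibit a geodesic of length $\sqrt{13}$, I would place the face $\{y = 0\}$ as $[0,1]^2$ with $v_1$ at the origin and $v_2 = (1, 0)$, then unfold the face $\{x = 0\}$ to the left, $\{z = 1\}$ above it, $\{y = 1\}$ to the left of that, and $\{x = 1\}$ above that. A direct calculation shows that the lift of $v_2$ in this unfolded strip sits at $(-2, 3)$, so the straight segment from $(0,0)$ to $(-2,3)$ has length $\sqrt{13}$, avoids all intermediate lattice points, and defines a geodesic trajectory from $v_1$ to $v_2$.

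To rule out the intermediate lengths, I would exploit the order-three rotation $\rho \colon (x,y,z) \mapsto (y,z,x)$ of the cube, which fixes $v_1$, cyclically permutes the three faces $\{y=0\}$, $\{x=0\}$, $\{z=0\}$ incident to $v_1$, and cyclically permutes the two orbits $\{v_2, (0,1,0), (0,0,1)\}$ and $\{(1,1,0), (1,0,1), (0,1,1)\}$. By $\rho$-equivariance, it suffices to examine geodesics starting in $\{y = 0\}$. A straightforward unfolding computation shows that from $v_1$ in this face, the directions $(1,1)$, $(2,1)$, $(1,2)$, $(3,1)$, $(1,3)$ end respectively at $(1,0,1)$, $(1,1,1)$, $(1,1,1)$, $(0,1,1)$, $(1,1,0)$. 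Applying $\rho$ and $\rho^2$ to a different starting face sends these endpoints within the set $\{(1,0,1), (0,1,1), (1,1,0), (1,1,1)\}$, which does not contain $v_2 = (1,0,0)$. Hence no geodesic of length $\sqrt{2}$, $\sqrt{5}$, or $\sqrt{10}$ joins $v_1$ to $v_2$.

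I expect the main obstacle to be the bookkeeping in the unfoldings: correctly identifying which face in three-space corresponds to each unit square of the unfolded strip, and thus which cube vertex sits at each lattice corner. This is particularly delicate for the $\sqrt{13}$ exhibit, where the strip crosses four successive faces with non-trivial identifications depending on the sequence of unfoldings.
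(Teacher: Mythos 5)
Your proof is correct, and it takes a genuinely different route from the paper's. The paper leverages its Theorem~\ref{thm:rotation}: the order-two rotation $F$ swapping the endpoints of $\gamma$ must fix the midpoint of $\gamma$, and since $F$ (being an edge-swap) fixes edge midpoints but not face centers of the cube, the unfolded endpoint $(x,y)$ has exactly one even coordinate. This parity constraint, together with ruling out $(\pm1,\pm2)$ (which folds to a main diagonal), immediately forces $x^2+y^2 \geq 13$. Your approach dispenses with Theorem~\ref{thm:rotation} entirely: you enumerate all coprime $(a,b)$ with $1 < a^2+b^2 < 13$, namely the five directions giving $\sqrt2, \sqrt5, \sqrt{10}$, trace each unfolding explicitly to identify its terminal vertex, and reduce the three starting faces to one via the $C_3$-stabilizer of $v_1$ (noting that $\{(1,0,1),(1,1,1),(0,1,1),(1,1,0)\}$ is $\rho$-invariant and omits $v_2$). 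I verified your five endpoint computations and your $(-2,3)$ exhibit; they are correct. The trade-off is clear: the paper's parity argument is leaner and recycles across Lemmas~\ref{lem:adjacent}--\ref{lem:diameter} uniformly, whereas your argument is more elementary and self-contained but front-loads all the unfolding bookkeeping you correctly flag as the delicate part. One small presentational quibble: you place $\{y=0\}$ as $[0,1]^2$ for the $\sqrt{13}$ exhibit, but the segment to $(-2,3)$ leaves $(0,0)$ directly into the second quadrant and never enters that square; it is cleaner to say the trajectory starts into the face $\{x=0\}$.
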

 \begin{proof}
Let $\gamma$ be a geodesic trajectory between adjacent vertices on the cube, let $\alpha$ be its unfolding to the plane starting at the origin as in the proof of \thmref{thm:rotation}, and let $(x,y)=\alpha(1) \in \ZZ^2$ be its other endpoint. Let $F$ be the order-two rotation of the cube that swaps the endpoints of $\gamma$. Since $F$ sends edges to edges, it fixes the midpoint $M$ of the edge between these endpoints, and hence the diametrically opposite point as well. Recall that the midpoint of $\gamma$ is a fixed point of $F$, from which we deduce that the midpoint of $\alpha$ is not the center of a tile, hence $x$ and $y$ are not both odd. They are not both even either, otherwise $\alpha$ would pass through a lattice point in its interior.  

For the same reason, if $\min(|x|,|y|)=0$, then $\max(|x|,|y|)=1$. This corresponds to the case where $\gamma$ is an edge. Otherwise, $\min(|x|,|y|)\geq1$ and $\max(|x|,|y|)\geq2$ because of the parity condition. If equality holds, then $\alpha$ folds back onto the cube to a path joining two diametrically opposite points, so this possibility is also ruled out. Hence either $\min(|x|,|y|)=1$ and $\max(|x|,|y|)\geq4$, leading to the bound
\[
\length(\gamma)=\length(\alpha)=\sqrt{x^2+y^2} \geq \sqrt{17},
\]
or $\min(|x|,|y|)\geq 2$ and $\max(|x|,|y|)\geq 3$, leading to 
$\length(\gamma)\geq \sqrt{13}$. This is achieved if $(x,y)=(2,3)$, which folds to a trajectory between two adjacent vertices on the cube (see \figref{fig:3_2}).
\end{proof}

 \begin{figure}
    \centering
    \subcaptionbox{One of the two second shortest trajectories between adjacent vertices\label{fig:3_2}}{\includegraphics[width=0.3\textwidth]{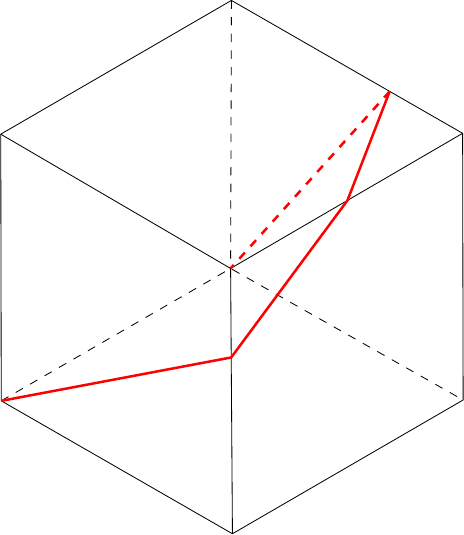}}
    \hfill
    \subcaptionbox{One of the two second shortest trajectories between opposite vertices on a face\label{fig:3_1}}{\includegraphics[width=0.3\textwidth]{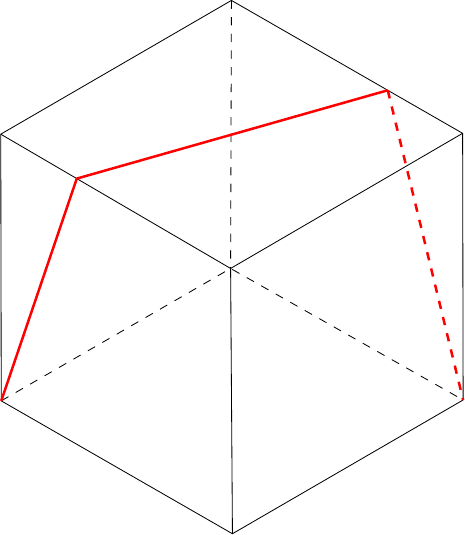}}
    \hfill
    \subcaptionbox{One of the six shortest trajectories between diametrically opposite vertices\label{fig:2_1}}{\includegraphics[width=0.3\textwidth]{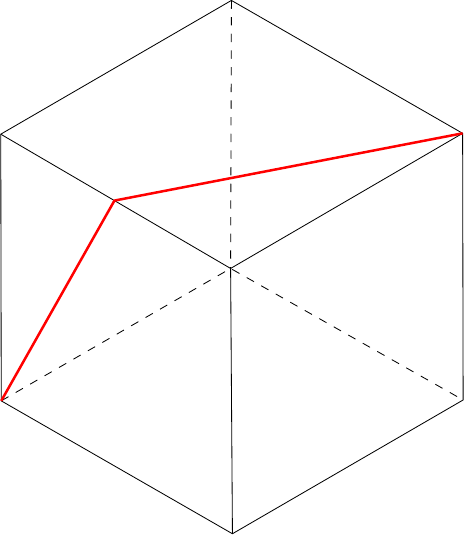}}
    \caption{Various geodesic trajectories between pairs of vertices on the cube.}
\end{figure}
 
  \begin{lem} \label{lem:diagonal}
 The shortest geodesic trajectory between two opposite vertices on a face of the unit cube is the diagonal of length $\sqrt{2}$ and the second shortest has length $\sqrt{10}$.
 \end{lem}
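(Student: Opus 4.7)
The plan is to mirror the unfolding argument used for \lemref{lem:adjacent}. Let $\gamma$ be a geodesic trajectory on the surface of the unit cube joining two vertices $v_1, v_2$ that are opposite corners of a common face. I would unfold $\gamma$ to a straight segment $\alpha$ in $\RR^2$ starting at the origin and ending at some lattice point $(x,y) \in \ZZ^2$ of the square tiling, and then constrain $(x,y)$ using parity and coprimality.

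The first step is to pin down the midpoint of $\gamma$. By \thmref{thm:rotation}, there is an order-two rotation $F$ of the cube preserving $\gamma$, which must therefore swap $v_1$ and $v_2$. Since $F$ sends the face containing $[v_1,v_2]$ to itself and acts on that square as an order-two rotation exchanging opposite corners, it fixes the center of that face (and, consequently, the center of the opposite face). The midpoint of $\gamma$, being a fixed point of $F$, is therefore a face center. Pulling this back through the unfolding, the midpoint $(x/2,y/2)$ of $\alpha$ must be the center of a unit tile, so both $x$ and $y$ are odd.

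Next I would enforce the usual non-backtracking condition $\gcd(|x|,|y|)=1$, which prevents $\alpha$ from passing through a lattice point in its interior. Since $x$ and $y$ are both odd, $x^2+y^2\equiv 2\pmod 4$, so the only squared lengths to consider are $2, 10, 18, 26,\ldots$. The value $2$ is realized only by $(\pm 1,\pm 1)$ and corresponds to the face diagonal itself of length $\sqrt{2}$. The value $18 = 3^2+3^2$ fails coprimality, so the next admissible squared length is $10$, achieved by $(\pm 1,\pm 3)$ and $(\pm 3,\pm 1)$, giving length $\sqrt{10}$.

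The step requiring the most care is verifying that the $\sqrt{10}$ candidate actually folds back onto the cube to a geodesic joining face-diagonally opposite vertices, rather than adjacent or diametrically opposite ones. This will follow from the orbit structure of the face-centered order-two rotation: it partitions the eight vertices into four face-diagonal pairs, and since the midpoint of any trajectory in question lies at a face center, its two endpoints must form one of these pairs. \figref{fig:3_1} depicts the resulting trajectory, and this verification is the direct analog of the exclusion of $(1,2)$ as a diametrically-opposite case in the proof of \lemref{lem:adjacent}.
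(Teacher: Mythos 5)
Your proposal is correct and follows the same unfolding argument as the paper's proof: the face-center fixed point of the order-two rotation forces both $x$ and $y$ to be odd, $(\pm 1,\pm 1)$ gives the face diagonal, and the next admissible odd lattice point gives length $\sqrt{10}$. Your explicit verification that the $(3,1)$ unfolding folds back to a trajectory between face-diagonally opposite vertices, via the orbit structure of the face-center rotation, is a small but sound addition to what the paper handles by pointing to a figure.
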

  \begin{proof}
The argument is similar to the previous one except that (using the same notation as before) the fixed points of $F$ are the centers of two opposite faces this time. This implies that $x$ and $y$ are both odd. If \[\min(|x|,|y|)=\max(|x|,|y|)=1,\] then $\gamma$ is a face diagonal of length $\sqrt{2}$. Otherwise, $\min(|x|,|y|)\geq 1$ and $\max(|x|,|y|)\geq 3$ so that $\length(\gamma)\geq \sqrt{10}$. This is achieved for the arc $\alpha$ ending at $(x,y)=(3,1)$ for example, which is indeed the unfolding of a trajectory joining two opposite vertices on a face of the cube (see \figref{fig:3_1}).
 \end{proof}

 \begin{lem} \label{lem:diameter}
 The shortest geodesic trajectory between two diametrically opposite vertices on the unit cube has length $\sqrt{5}$.
 \end{lem}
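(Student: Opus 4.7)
The plan is to mirror the arguments of \lemref{lem:adjacent} and \lemref{lem:diagonal}. Let $\gamma$ be a geodesic trajectory between two diametrically opposite vertices of the unit cube, and let $\alpha$ be its unfolding starting at the origin in the square tiling of $\RR^2$, with endpoint $(x,y)\in\ZZ^2$; this is the same unfolding construction as in the proof of \thmref{thm:rotation}. Because $\gamma$ avoids vertices in its interior, $\alpha$ passes through no interior lattice points, so $\gcd(|x|,|y|)=1$ whenever both coordinates are nonzero and $\{|x|,|y|\}=\{0,1\}$ if one of the coordinates is zero.

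For the upper bound, I will exhibit the trajectory depicted in \figref{fig:2_1} explicitly. Place $(0,0,0)$ at the origin, unfold the face $z=0$ onto $[0,1]^2$, and then unfold the face $x=1$ onto $[1,2]\times[0,1]$; a direct check shows that the vertex $(1,1,1)$ of the cube corresponds under this unfolding to the lattice point $(2,1)$. The straight segment from $(0,0)$ to $(2,1)$ has length $\sqrt{5}$, contains no interior lattice point, and therefore folds back to a geodesic trajectory from $(0,0,0)$ to the diametrically opposite vertex $(1,1,1)$.

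For the lower bound, I will rule out the short lattice points one at a time. Suppose $\length(\gamma)^2 = x^2 + y^2 \leq 4$. Then $(|x|,|y|) \in \{(1,0),(0,1),(1,1),(2,0),(0,2)\}$. In the first two cases $\alpha$ is one side of a tile, so $\gamma$ is an edge of the cube and joins adjacent (hence not diametric) vertices. In the case $(|x|,|y|)=(1,1)$ the segment $\alpha$ stays inside the starting tile $[0,1]^2$, so $\gamma$ is a face diagonal joining two vertices that share a face and therefore are not diametric either. The cases $(|x|,|y|)\in\{(2,0),(0,2)\}$ are excluded by the $\gcd$ condition, since the segment passes through the interior lattice point $(\pm 1,0)$ or $(0,\pm 1)$, respectively. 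Each possibility contradicts the hypothesis that $\gamma$ joins diametric vertices, so $\length(\gamma) \geq \sqrt{5}$.

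The only delicate point is verifying that the explicit segment from $(0,0)$ to $(2,1)$ really folds back to a trajectory between diametrically opposite vertices, since the identification of planar lattice points with cube vertices depends on the sequence of face unfoldings used. Once this is tracked carefully for the two faces involved, the rest of the argument is a short enumeration; unlike in the previous two lemmas, no direct use of the order-two rotation from \thmref{thm:rotation} is required, because the set of lattice vectors of length strictly less than $\sqrt{5}$ is already too sparse to contain any vector coming from a diametric trajectory.
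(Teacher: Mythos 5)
Your proof is correct, but it takes a genuinely different route from the paper's. The paper's proof invokes \thmref{thm:rotation}: the order-two rotation $F$ exchanging the two diametrically opposite endpoints of $\gamma$ must fix the midpoints of two opposite edges (rotations about face centers do not swap a diametric pair), which forces $x$ and $y$ to have opposite parity; combined with the observation that $\gamma$ is not an edge, this gives $\min(|x|,|y|)\geq 1$ and $\max(|x|,|y|)\geq 2$, hence $\length(\gamma)\geq\sqrt5$. You instead avoid the rotation theorem entirely and simply enumerate the lattice vectors with $x^2+y^2\leq 4$, noting that $(0,\pm 1)$ and $(\pm 1,0)$ fold to cube edges (joining adjacent vertices), $(\pm 1,\pm 1)$ stays in a single tile and folds to a face diagonal (joining face-opposite vertices), and $(\pm 2,0),(0,\pm 2)$ are excluded because they pass through an interior lattice point. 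Each possibility joins a non-diametric pair, so no diametric trajectory can be shorter than $\sqrt5$, and you correctly verify that the segment from $(0,0)$ to $(2,1)$ folds to a diametric trajectory. Your approach is more elementary for this particular lemma, since the constraint you actually need here ("not an edge, not a face diagonal, no interior lattice point") already rules out all lattice vectors of length less than $\sqrt5$. The cost is that it does not scale: in Lemmas~\ref{lem:adjacent} and~\ref{lem:diagonal} one also needs the \emph{second}-shortest trajectory length ($\sqrt{13}$ and $\sqrt{10}$), where the parity constraint coming from \thmref{thm:rotation} does real work in thinning out the set of admissible lattice vectors; a bare gcd-and-enumeration argument would have to separately rule out many more candidates whose folded endpoints are not easy to identify by inspection. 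So the paper's method is the more uniform one across the three lemmas, while yours is a cleaner shortcut for this one.
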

\begin{proof}
Let the notation be as before. Since the rotations of order $2$ around the centers of opposite faces do not send pairs of diametrically opposite vertices to themselves, $F$ must be a rotation around the midpoints of two opposite edges. We deduce that $x$ and $y$ have opposite parity, so that one of $x/2$ and $y/2$ is an integer and the other is not. Furthermore, if one of them is zero, then the other must be $\pm1$ since the trajectory is not allowed to hit vertices in its interior. As the trajectory $\gamma$ is not an edge, this possibility is ruled out. We thus have $\min(|x|,|y|)\geq 1$ and $\max(|x|,|y|)\geq2$ so that $\length(\gamma) \geq \sqrt{5}$. Equality is achieved for $(x,y)=(2,1)$, which is indeed the unfolding of a trajectory between two diametrically opposite vertices on the cube (see \figref{fig:2_1}).
\end{proof}

Recall that the \emph{simple length spectrum} of a Riemannian surface $(Y,h)$ is defined as
\[
\left\{ \ell(c,h) : c \text{ is the homotopy class of an essential simple closed curve in }Y \right\}.
\]
We will now combine the above estimates together with \lemref{lem:geod_rep} in order to compute the bottom of the simple length spectrum of $X$ with respect to the flat metric of edge length $1$.

We first need to define two more types of curves on $X$. A \emph{triangle curve} is a curve that surrounds an isosceles right triangle obtained by dividing a face of the cube in two along a diagonal (see \figref{fig:triangle}). A \emph{double-edge} curve is the curve depicted in \figref{fig:double-edge} or one of its images by an isometry. 

\begin{prop} \label{prop:simple_length_spectrum}
The four smallest entries in the simple length spectrum of the unit cube punctured at its vertices are $2$, $2\sqrt{2}$, $2 + \sqrt{2}$, $4$, and $3\sqrt{2}$. These are realized only by the edge curves, the diagonal curves, the triangle curves, the face curves or double-edge curves, and the hexagon curves, respectively.
\end{prop}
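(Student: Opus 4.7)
The plan is to combine Lemma~\ref{lem:geod_rep} with Theorem~\ref{thm:rotation} to reduce the question to a combinatorial enumeration of closed vertex-to-vertex geodesic walks on the cube of flat length at most $3\sqrt{2}$, using the length lower bounds of Lemmas~\ref{lem:adjacent}, \ref{lem:diagonal}, and \ref{lem:diameter}.

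For any homotopy class $c$ of essential simple closed curves on $X$, Lemma~\ref{lem:geod_rep} provides a geodesic representative $\gamma^*$ realizing $\ell(c,h_{\mathrm{flat}})$ that passes through at least one vertex (the only cone points of the cube). By Theorem~\ref{thm:rotation}, $\gamma^*$ cannot emanate from and return to a single vertex while avoiding all others, so it is a closed concatenation of $n\geq 2$ geodesic segments between vertices. Each pair of endpoints is of one of three types (adjacent, opposite on a face, or diametrically opposite), with shortest geodesic lengths $1$, $\sqrt{2}$, $\sqrt{5}$ and second-shortest lengths $\sqrt{13}$, $\sqrt{10}$, and at least $\sqrt{13}$, respectively. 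Assuming $\length(\gamma^*)\leq 3\sqrt{2} < 5$ forces $n\leq 4$; moreover, since $\sqrt{13}+1 > 3\sqrt{2}$, since $\sqrt{10}+2 > 3\sqrt{2}$ (relevant when $n\geq 3$), and since $\sqrt{10}+\sqrt{2} > 3\sqrt{2}$ (relevant when $n=2$, where both segments share endpoints and thus combinatorial type), every segment of $\gamma^*$ must have length in $\{1,\sqrt{2},\sqrt{5}\}$.

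I would then enumerate cyclic sequences. For $n=2$ the two segments share endpoints, so are of the same type, giving $(1,1)$ (edge curves, total $2$), $(\sqrt{2},\sqrt{2})$ (diagonal curves, total $2\sqrt{2}$), or $(\sqrt{5},\sqrt{5})$, excluded since $2\sqrt{5} > 3\sqrt{2}$. For $n=3$, quick coordinate arguments on $\{0,1\}^3$ rule out $(1,1,1)$ (no triangle in the cube graph), $(1,1,\sqrt{5})$ (diametrically opposite vertices share no common neighbor), and $(1,\sqrt{2},\sqrt{2})$ (the face-diagonal neighbors of two adjacent vertices form disjoint sets), leaving $(1,1,\sqrt{2})$ (two edges and the face diagonal of a right triangle inside a face, giving triangle curves of total $2+\sqrt{2}$) and $(\sqrt{2},\sqrt{2},\sqrt{2})$ (three pairwise face-opposite vertices forming the equilateral triangle perpendicular to a main diagonal, giving hexagon curves of total $3\sqrt{2}$). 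For $n=4$, since $3+\sqrt{2} > 3\sqrt{2}$, all four segments must be edges, and the closed $4$-walks in the cube graph split into simple $4$-cycles (which are precisely the faces of the cube, giving face curves) and walks of the form $v_0 v_1 v_2 v_1$ backtracking along a two-edge path (giving double-edge curves); the degenerate walk $v_0 v_1 v_0 v_1$ is not geodesic at the cone-point cusps and its homotopy class is that of an edge curve, with pulled-tight length $2<4$.

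The main obstacle is the $n=3$ case analysis, where several plausible configurations must be ruled out by hand via coordinate checks, together with the double-edge case at $n=4$, where the representative is non-simple and degenerate, so one must invoke Lemma~\ref{lem:geod_rep} to identify the simple homotopy classes limiting to it. Once this enumeration is complete, the five smallest values in the simple length spectrum are exactly $2$, $2\sqrt{2}$, $2+\sqrt{2}$, $4$, and $3\sqrt{2}$, realized respectively by edge, diagonal, triangle, face or double-edge, and hexagon curves.
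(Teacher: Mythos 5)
Your proposal matches the paper's proof essentially step for step: Lemma~\ref{lem:geod_rep} together with Theorem~\ref{thm:rotation} reduce the problem to enumerating closed concatenations of vertex-to-vertex geodesic trajectories, the length bound forces at most four segments, and Lemmas~\ref{lem:adjacent}--\ref{lem:diameter} control each segment; the only organizational difference is that for $n=3$ you enumerate by multisets of segment lengths and rule out impossible tuples with graph-theoretic checks, whereas the paper observes that up to isometry there are only three triples of distinct vertices and bounds the length for each. One point deserving more explicit care than you give it: the ``realized only by'' part requires identifying, for \emph{every} degenerate representative $\gamma^*$ (not just the double-edge one), which simple homotopy classes can limit to it -- the paper does this via the pair-of-pants argument for $n=2$ and by showing a nearby curve circling a vertex from the ``inside'' could be shortened for the triangle, hexagon, face, and double-edge cases.
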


\begin{proof}
Let $\gamma$ be an essential simple closed curve on $X$. Pull $\gamma$ tight to a length-minimizing curve $\gamma^*$ that passes through at least one vertex on the cube $\overline{X}$ as described in \lemref{lem:geod_rep}.
Then $\gamma^*$ is a concatenation of a certain number of vertex-to-vertex geodesic trajectories on the cube. Furthermore, by \thmref{thm:rotation}, $\gamma^*$ contains at least two such geodesic segments.

First suppose that $\gamma^*$ is made of exactly two geodesic segments (necessarily between distinct vertices by \thmref{thm:rotation}). Then the two vertices are either adjacent, opposite on a face, or diametrically opposite. 

Suppose that the two vertices are adjacent. Since $\gamma^*$ is made of two geodesic segments connecting these two vertices, we have that $\length(\gamma^*) \geq 2$ by \lemref{lem:adjacent}. If equality holds, then $\gamma^*$ is an edge of the cube traced twice. By \lemref{lem:geod_rep}, there is a sequence of simple closed curves $\gamma_n$ homotopic to $\gamma$ that converge to $\gamma^*$. Note that $\gamma^*$ has a neighborhood $U$ whose intersection with $X$ is a pair of pants (two of whose ends are punctures). By an Euler characteristic calculation, every simple closed curve in a pair of pants is peripheral, so once $n$ is large enough so that $\gamma_n \subset U\cap X$, we know that $\gamma_n$ (and hence $\gamma$) is homotopic to an edge curve since the other two peripheral curves in $U\cap X$ are inessential. If $\gamma^*$ is not an edge traced twice, then again by \lemref{lem:adjacent}, it has length at least $1+\sqrt{13} > 3\sqrt{2}$ (both geodesic segments in $\gamma^*$ have length at least $1$ and at least one of them has length at least $\sqrt{13}$).

Suppose now that the two vertices are opposite on a face. By a similar argument as above but using \lemref{lem:diagonal}, we get that $\length(\gamma^*)\geq 2\sqrt{2}$ with equality only if $\gamma$ is homotopic to a diagonal curve. Otherwise we get $\length(\gamma^*)\geq \sqrt{2} + \sqrt{10} > 3\sqrt{2}$.

Lastly, suppose that the two vertices are diametrically opposite. Then $\length(\gamma^*)\geq 2\sqrt{5}>3\sqrt{2}$ by \lemref{lem:diameter}.

\begin{figure}
    \centering
    \includegraphics[width=0.3\textwidth]{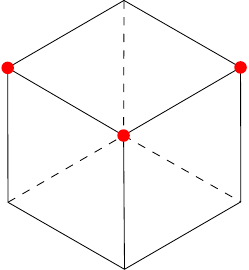}
    \hfill
    \includegraphics[width=0.3\textwidth]{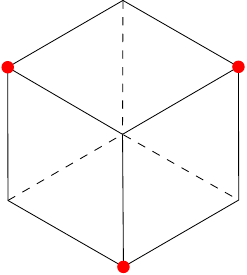}
    \hfill
    \includegraphics[width=0.3\textwidth]{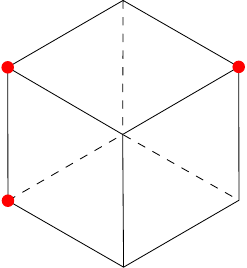}
    \caption{The three triples of distinct vertices on the cube up to isometry.}
    \label{fig:triples}
\end{figure}

Next, suppose that $\gamma^*$ is made of exactly three geodesic segments. Note that the three vertices through which $\gamma^*$ passes are necessarily distinct since the first is distinct from the second, the second from the third, and the third from the first by \thmref{thm:rotation}. Furthermore, is easy to see that up to isometry, there are only three different configurations of three distinct vertices on the cube. Either two pairs of vertices are adjacent and one pair of vertices are opposite on a face, or the three pairs are opposite on faces, or one pair is adjacent, one pair opposite on a face, and one pair diametrically opposite (see \figref{fig:triples}). By the previous lemmata, the length of $\gamma^*$ is at least $2+\sqrt{2}$ in the first case, $3\sqrt{2}$ in the second case, and $1+\sqrt{2}+\sqrt{5}$ in the third case. Now
\[
2+\sqrt{2} < 3\sqrt{2} < 1+\sqrt{2}+\sqrt{5}
\]
so that $\length(\gamma^*) \geq 2+\sqrt{2}$ with equality only if  $\gamma^*$ is an isosceles right triangle forming half a face of the cube. The pair of pants argument above cannot be used in this case since a neighborhood of $\gamma^*$ intersected with $X$ is a sphere with at least $4$ holes or punctures. However, it follows from \lemref{lem:geod_rep} that $\gamma$ can be homotoped to follow $\gamma^*$ most of the time, but circling around the vertices to the left or the right before hitting them. Now, if $\gamma$ was circling any of the vertices from inside the triangle, then it could be homotoped to a much shorter curve, yielding $\ell([\gamma],h_{\mathrm{flat}})<\length(\gamma^*)$, a contradiction. This means that $\gamma$ must go around the vertices of the triangle from the outside, hence that it is a triangle curve. If equality does not hold but the three vertices are in this configuration, then the length of at least one of the three geodesic segments in $\gamma^*$ has to increase to the next smallest trajectory length between its endpoints, yielding \[\length(\gamma^*)\geq \min(1+\sqrt{2}+\sqrt{13},2+\sqrt{10}) = 2+\sqrt{10} > 3\sqrt{2}.\] This means that if $\gamma^*$ passes through exactly three vertices and $\gamma$ is not a triangle curve, then $\length(\gamma^*)\geq 3\sqrt{2}$. If equality holds, then $\gamma^*$ is an equilateral triangle of side length $\sqrt{2}$. In that case, $\gamma$ must pass outside of this triangle at each vertex, otherwise it can be homotoped to a curve of length strictly smaller than $3\sqrt{2}$, because the inner angles at the vertices of the triangle are equal to $\pi/2 < \pi$. Thus $\gamma$ is homotopic to a hexagon curve in this case.

Finally, suppose that $\gamma^*$ is made of at least four geodesic segments. Each geodesic segment has length at least $1$ so that $\length(\gamma^*)\geq 4$. If equality holds, then $\gamma^*$ is a concatenation of four edges. If $\gamma^*$ traces one edge four times, then the pair of pants argument still applies to show that $\gamma$ is an edge curve, but then $\gamma^*$ only traces the edge twice, which is a contradiction. Thus $\gamma^*$ traces out at least two edges. Note that $\gamma^*$ cannot trace exactly three edges, in which case it would not close up. If $\gamma^*$ traces four distinct edges, then it forms the boundary of a face because these are the only embedded cycles of length $4$ on the $1$-skeleton of the cube. In this case, $\gamma$ must be a face curve because if it was circling any vertex from inside the square, then it could be shortened by homotopy to length at most $2+\sqrt{2}+\varepsilon$ for any $\varepsilon>0$. The remaining case is if $\gamma^*$ traces only two distinct adjacent edges, each of them twice. In this case, we can argue that the approximating simple closed curve $\gamma$ must be a double-edge curve. Indeed, $\gamma$ has to circle the two outermost vertices from outside, otherwise we can shorten the length by $1-\varepsilon$. At the inner vertex, the two strands of $\gamma$ have to pass on the side of $\gamma^*$ that crosses an edge (i.e., on the outside of the face containing $\gamma^*$), because if one strand passes on the face side instead, then that strand can be pulled tight to have length $\sqrt{2}$ instead of $2$. Lastly, if $\length(\gamma^*)> 4$ then either there are at least $5$ geodesic segments and $\length(\gamma^*)\geq 5 > 3\sqrt{2}$ or at least one of the edges gets replaced by the next shortest trajectory, so that  $\length(\gamma^*)\geq 3+\sqrt{2} > 3\sqrt{2}$.
\end{proof}

This proposition implies a good lower bound on the extremal length of most essential simple closed curves on the punctured cube $X$.

\begin{cor} \label{cor:lower_flat}
If $\gamma$ is an essential simple closed curve in $X$ different from an edge curve, a diagonal curve, a triangle curve, a face curve, or a double-edge curve, then
\[
\el(\gamma,X) \geq 3,
\]
which is strictly larger than the extremal length of an edge curve.
\end{cor}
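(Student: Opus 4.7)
The plan is to combine the test-metric lower bound on extremal length from the opening of \secref{sec:trajectories} with the length spectrum calculation of \propref{prop:simple_length_spectrum}. Concretely, for any essential simple closed curve $\gamma$ on $X$, plugging the flat metric $h_{\mathrm{flat}}$ into the definition of extremal length (and using that the surface area of the unit cube is $6$) gives
\[
\el(\gamma,X) \;\geq\; \frac{\ell([\gamma],h_{\mathrm{flat}})^2}{6}.
\]
Thus it suffices to show that $\ell([\gamma],h_{\mathrm{flat}})\geq 3\sqrt{2}$ whenever $\gamma$ avoids the five excluded classes, since then $\el(\gamma,X)\geq 18/6 = 3$.

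The core input is \propref{prop:simple_length_spectrum}, which enumerates the bottom of the simple length spectrum together with the curves that realize each value. Its proof actually establishes slightly more than what is stated: any essential simple closed curve that is \emph{not} homotopic to an edge, diagonal, triangle, face, or double-edge curve has flat length at least $3\sqrt{2}$ (either it is a hexagon curve, attaining the value $3\sqrt{2}$ exactly, or one of the case analyses inside that proof produces a strictly larger lower bound such as $1+\sqrt{13}$, $\sqrt{2}+\sqrt{10}$, $2\sqrt{5}$, $2+\sqrt{10}$, $5$, or $3+\sqrt{2}$, all of which exceed $3\sqrt{2}$). So I would simply invoke \propref{prop:simple_length_spectrum} in this refined form to obtain $\ell([\gamma],h_{\mathrm{flat}})\geq 3\sqrt{2}$, and conclude $\el(\gamma,X)\geq 3$.

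For the final comparison with edge curves, I would cite \corref{cor:upper_bound}, which shows that the extremal length of an edge curve lies in the interval $(2.84317,\,2.8431861)$. Since $2.8431861<3$, the desired strict inequality is immediate and the corollary follows. There is no serious obstacle here: the entire argument is a two-line deduction from previously established results, and the only thing to be careful about is reading off from the proof of \propref{prop:simple_length_spectrum} that the alternative is truly $\geq 3\sqrt{2}$ rather than just strictly greater than the previously listed value $4$.
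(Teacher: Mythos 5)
Your proof matches the paper's exactly: bound extremal length from below by $\ell([\gamma],h_{\mathrm{flat}})^2/6$, invoke \propref{prop:simple_length_spectrum} to obtain $\ell([\gamma],h_{\mathrm{flat}})\geq 3\sqrt{2}$, and compare $18/6=3$ with the upper bound from \corref{cor:upper_bound}. One small remark: you do not actually need to look inside the proof of \propref{prop:simple_length_spectrum} for the ``refined form,'' since its statement already asserts that the flat lengths $2$, $2\sqrt{2}$, $2+\sqrt{2}$, and $4$ are realized only by the five excluded curve types and that these together with $3\sqrt{2}$ are the smallest entries of the simple length spectrum, so any other essential simple closed curve automatically has flat length at least $3\sqrt{2}$.
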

\begin{proof}
By \propref{prop:simple_length_spectrum} and the definition of extremal length, we have
\[
\el(\gamma,X) \geq \frac{\ell([\gamma],h_{\mathrm{flat}})^2}{\area(h_{\mathrm{flat}})} \geq \frac{(3\sqrt{2})^2}{6} = 3,
\]
which is strictly larger than the upper bound $2.843187$ on the extremal length of edge curves obtained in \corref{cor:upper_bound}.
\end{proof}

Note that we already know that diagonal curves and face curves have a larger extremal length that the edge curves by \propref{prop:diagonal} and \propref{prop:face}. However, if $\gamma$ is a double-edge curve, then the flat metric gives
\[
\el(\gamma,X) \geq \frac{\ell([\gamma],h_{\mathrm{flat}})^2}{\area(h_{\mathrm{flat}})} \geq \frac{4^2}{6} = \frac{8}{3} = 2 + \frac{2}{3},
\]
which is not good enough to rule it out as realizing the extremal length systole. The lower bound is even smaller for triangle curves. We thus need a different argument for these two types of curves.

\section{Lower bounds from Minsky's inequality and conformal maps} \label{sec:Minsky}

Recall that the \emph{geometric intersection number} $i(\alpha,\beta)$ between two closed curves $\alpha$ and $\beta$ is the smallest  number of intersections (counted with multiplicity) between representatives of their homotopy classes. By the bigon criterion, if $\alpha$ and $\beta$ are transverse and do not form any unpunctured bigon, then $i(\alpha,\beta)$ is simply equal to the number of intersections between them. Minsky's inequality \cite[Lemma 5.1]{MinskyIneq} can be used to obtain lower bounds on extremal length in term of intersection numbers.

\begin{lem}[Minsky's inequality]
Let $Y$ be a Riemann surface with a finitely generated fundamental group and let $\alpha$ and $\beta$ be two essential simple closed curves in $Y$. Then
\[
\el(\alpha,Y)\el(\beta,Y) \geq i(\alpha,\beta)^2.
\]
\end{lem}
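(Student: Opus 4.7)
The plan is to use the Jenkins--Strebel differential $q_\alpha$ associated to $\alpha$ (from the theorem of Jenkins stated in \secref{sec:background}) as a test metric in the definition of $\el(\beta, Y)$. This differential equips $Y$ with a flat metric $|q_\alpha|$ whose complement of critical horizontal trajectories is a single Euclidean cylinder of some circumference $c_\alpha$ and height $h_\alpha$ whose core is homotopic to $\alpha$, so that $\ell([\alpha], |q_\alpha|) = c_\alpha$, $\area(Y,|q_\alpha|) = c_\alpha h_\alpha$, and $\el(\alpha, Y) = c_\alpha/h_\alpha$. If we can show that every representative of $\beta$ has $|q_\alpha|$-length at least $h_\alpha \cdot i(\alpha,\beta)$, then
\[
\el(\beta, Y) \geq \frac{(h_\alpha\, i(\alpha,\beta))^2}{c_\alpha h_\alpha} = \frac{h_\alpha}{c_\alpha}\, i(\alpha,\beta)^2 = \frac{i(\alpha,\beta)^2}{\el(\alpha, Y)},
\]
which is equivalent to the desired inequality.

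First I would identify the union $U$ of regular horizontal trajectories of $q_\alpha$ with a Euclidean cylinder $S^1_{c_\alpha} \times (0, h_\alpha)$ in which the horizontal circles $\tau_t = S^1_{c_\alpha} \times \{t\}$ are all freely homotopic to $\alpha$. The key topological observation is then that for any smooth representative $\gamma$ of $\beta$ transverse to the horizontal foliation, the number of intersections $|\gamma \cap \tau_t|$ is bounded below by $i(\gamma,\tau_t) = i(\alpha,\beta)$ for every $t$ at which it is defined, simply because geometric intersection number is the minimum of $|\gamma \cap \tau'|$ over all curves $\tau'$ freely homotopic to $\tau_t$.

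The second step is a coarea estimate on $U$. Writing the flat metric on $U$ as $dx^2+dy^2$ with $y$ the height coordinate, we have $\length(\gamma \cap U, |q_\alpha|) \geq \int_{\gamma \cap U} |dy|$, and applying the change-of-variables formula for the projection onto the $y$-axis gives
\[
\int_{\gamma \cap U} |dy| = \int_0^{h_\alpha} |\gamma \cap \tau_t|\, dt \geq h_\alpha \cdot i(\alpha,\beta).
\]
Since the critical graph contributes non-negatively to length, this yields $\ell([\beta], |q_\alpha|) \geq h_\alpha \cdot i(\alpha, \beta)$ after taking the infimum over $\gamma$, and the proof is completed by the computation in the first paragraph.

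The main obstacle is making the coarea argument rigorous for an arbitrary representative $\gamma$ of $\beta$, which may cross the critical graph, fail to be smooth, or fail to be transverse to the horizontal foliation. I would handle this by first approximating $\gamma$ by a piecewise smooth simple closed curve (legitimate because $Y$ has finitely generated fundamental group and $\beta$ is essential) and then perturbing it slightly to achieve transversality with $\tau_t$ for almost every $t$, at the cost of an arbitrarily small increase in length. Finally, if $\alpha$ is inessential then $\el(\alpha,Y)=0$ but also $i(\alpha,\beta)=0$, so the inequality is trivial and Jenkins's theorem need only be invoked in the essential case where it applies; the same observation takes care of $\beta$.
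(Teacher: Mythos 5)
The paper states this lemma with a citation to Minsky and does not include a proof, so there is no in-paper argument to compare against. Your proof is correct and is essentially the standard one. You take the Jenkins--Strebel differential $q_\alpha$ from the theorem of Jenkins recalled in \secref{sec:background}; its complement of critical horizontal trajectories is a single Euclidean cylinder of circumference $c_\alpha$ and height $h_\alpha$, and $\el(\alpha,Y)=c_\alpha/h_\alpha$. Each regular closed horizontal trajectory $\tau_t$ is a representative of $\alpha$, so any transverse representative $\gamma$ of $\beta$ satisfies $|\gamma\cap\tau_t|\geq i(\alpha,\beta)$, whence the coarea count gives $\ell([\beta],|q_\alpha|)\geq h_\alpha\,i(\alpha,\beta)$, and plugging $|q_\alpha|$ as a test metric into the definition of $\el(\beta,Y)$ yields the inequality exactly as you compute. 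The technical point you flag is real but routine: approximate $\gamma$ by a smooth curve of nearly the same length, and Sard's theorem applied to the height coordinate along the smooth approximant shows it is transverse to $\tau_t$ for almost every $t$, which is all that the Fubini/coarea step requires. Two minor remarks: the bound $|\gamma\cap\tau_t|\geq i(\alpha,\beta)$ holds because the geometric intersection number is by definition the minimum over transverse representatives of \emph{both} classes (your phrasing, which minimizes only over the $\tau'$, is slightly imprecise though harmless); and the closing remark about inessential curves is unnecessary, since the lemma as stated already assumes both $\alpha$ and $\beta$ essential.
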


This immediately implies the following estimate for the double-edge curves on $X$.

\begin{cor} \label{cor:double-edge}
The extremal length of any double-edge curve on the cube punctured at its vertices is at least $5.627487$.
\end{cor}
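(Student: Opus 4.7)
The plan is to apply Minsky's inequality with a suitably chosen edge curve $\alpha$, exploiting the fact that edge curves have the smallest known upper bound on their extremal length (namely $2.8431861$ from \corref{cor:upper_bound}). Concretely, if we can exhibit, for each double-edge curve $\beta$, an edge curve $\alpha$ with geometric intersection number $i(\alpha,\beta)=4$, then Minsky's inequality yields
\[
\el(\beta,X)\;\geq\;\frac{i(\alpha,\beta)^2}{\el(\alpha,X)}\;\geq\;\frac{16}{2.8431861}\;\geq\;5.627489,
\]
as required. By the symmetries of the cube it suffices to verify this for a single representative double-edge curve.

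To identify $\alpha$, I would recall from the end of the proof of \propref{prop:simple_length_spectrum} that the minimal length representative of $\beta$ traces two adjacent edges $e_1,e_2$ each twice, meeting at an inner vertex $v$ and lying in a common face $F_0$, and that the two strands of $\beta$ near $v$ both cross the unique third edge $e_3$ incident to $v$ (the edge not in $F_0$). I would therefore take $\alpha$ to be the edge curve around $e_3$, realized as the boundary of a thin tubular neighborhood of $e_3$ on the surface of the cube. Each of the two transverse crossings of $\beta$ with $e_3$ contributes two points of intersection with $\alpha$, so $\alpha$ and $\beta$ meet in exactly four points in this configuration.

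The main step, and the main obstacle, is to verify that these four intersections are in minimal position, so that $i(\alpha,\beta)=4$ rather than $2$ or $0$. I would use the bigon criterion: it suffices to check that no pair of sub-arcs, one from $\alpha$ and one from $\beta$ between consecutive intersection points, cobounds an embedded disk in $X$. For this, I would cut the cube open along the tree $e_1\cup e_2\cup e_3$ to flatten the two faces adjacent to $e_1, e_2$ other than $F_0$, draw both curves in the resulting planar picture, and track the cyclic orders in which the four intersection points appear on $\alpha$ and on $\beta$, verifying by a short case analysis that the two orderings interleave. Once this is established, Minsky's inequality combined with \corref{cor:upper_bound} immediately yields the stated bound.
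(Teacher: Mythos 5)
Your proposal follows exactly the paper's argument: apply Minsky's inequality to the double-edge curve and an edge curve meeting it in four points, then use the upper bound $\el(\text{edge},X)\leq 2.8431861$ from \corref{cor:upper_bound} to obtain $16/2.8431861 \geq 5.627489$. The paper simply points to Figures \ref{fig:double-edge} and \ref{fig:edge} for the pair in minimal position, whereas you identify the relevant edge $e_3$ explicitly and sketch the bigon-criterion check, but this is the same proof.
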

\begin{proof}
For any double-edge curve $\alpha$, there is an edge curve $\beta$ such that $i(\alpha,\beta)=4$. For instance, the representatives in Figures \ref{fig:double-edge} and \ref{fig:edge} do not form any bigon and intersect $4$ times. By  Minsky's inequality and \corref{cor:upper_bound}, we obtain
\[
\el(\alpha,X)\geq \frac{i(\alpha,\beta)^2}{\el(\beta,X)} \geq \frac{16}{2.843187} \geq 5.627487
\]
as required.
\end{proof}

Given a triangle curve $\alpha$, we can find a staple curve $\beta$ that intersects it $4$ times. By Remark \ref{rem:staple}, this yields
\[
\el(\alpha,X)\geq \frac{i(\alpha,\beta)^2}{\el(\beta,X)} \geq \frac{16}{6.9282032302755+2.74 \times 10^{-14}},
\]
but the right-hand side is smaller than $2.843188$, so it is not good enough.

We thus replace the staple curve by a \emph{diamond curve} that surrounds the union of an edge and an adjacent face diagonal that lie in a common plane of reflection (see \figref{fig:diamond}). Computing the extremal length of a diamond curve would be as hard as computing the extremal length of an edge curve, which was quite tedious. Instead, we prove a good upper bound by finding a large embedded annulus.

\begin{prop} \label{prop:diamond}
The extremal length of any diamond curve  on the cube punctured at its vertices is at most $5.535645$.
\end{prop}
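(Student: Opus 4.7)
My strategy is to produce an explicit embedded annulus $A\subseteq X$ whose core curve is homotopic to a diamond curve and whose modulus satisfies $\mathrm{mod}(A)\geq 1/5.535645\approx 0.1807$. By the third part of Jenkins's theorem, this gives $\el(\text{diamond},X)\leq \el(A)\leq 5.535645$. In particular, the annulus need not be the extremal Jenkins--Strebel cylinder; any embedded one suffices.

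I would work on the sphere model $\CHAT\setminus\{\pm\rho,\pm 1/\rho,\pm\omega,\pm\wbar\omega\}$ from \lemref{lem:edge_configuration}, in which a diamond curve is a simple closed curve separating the three real ``inside'' punctures $\{\rho,-\rho,-1/\rho\}$ from the five ``outside'' ones $\{1/\rho,\pm\omega,\pm\wbar\omega\}$. A natural first step is the Joukowski-type map $z=-\sqrt{2}+\tfrac{1}{2}(w+3/w)$, which identifies $\{|w|>\sqrt{3}\}$ conformally with the slit complement $\CHAT\setminus[-1/\rho,\rho]$; note that the three inside punctures all lie on the slit. The five outside punctures pull back to explicitly computable locations in the $w$-plane, and any round annulus $\{\sqrt{3}<|w|<R\}$ avoiding these five points pulls back to an embedded annulus in $X$ whose core is homotopic to the diamond curve.

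The direct round-annulus estimate with $R$ equal to the smallest $|w|$-modulus of an outside puncture image is not sharp enough, so I would compose with a further explicit conformal map before extracting an annulus. Natural candidates include a Möbius transformation that pushes the closest outer-puncture image (in the $w$-plane) to infinity, or a map that exploits the $w\mapsto\wbar w$ symmetry to work in a half-plane and then uses a Schwarz--Christoffel or doubling argument. In either case, one can surround the five outer-puncture images by a Jordan curve $\Gamma$ lying entirely outside $\{|w|\leq \sqrt{3}\}$, and the region between $\{|w|=\sqrt{3}\}$ and $\Gamma$ pulls back to the desired embedded annulus. Its modulus is read off from the conformal moduli in the chain, and the final numerical bound is certified by interval arithmetic.

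The main obstacle is geometric rather than analytic: the five outside-puncture images are spread unevenly in the $w$-plane (one of them, the image of $1/\rho$, is much farther from the slit boundary than the others), so one cannot simply take a round annulus centered at $0$. The art is to choose the intermediate conformal map so that $\Gamma$ can be threaded between the slit boundary and the closest outer-puncture image without sacrificing too much modulus. Once such a $\Gamma$ is in place, the numerical verification that $\mathrm{mod}(A)\geq 0.1807$ is a routine interval-arithmetic computation.
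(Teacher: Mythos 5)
Your overall strategy --- embed an explicit annulus $A$ with core homotopic to a diamond curve and bound $\el(\text{diamond})\leq\el(A)$ --- is exactly the paper's strategy, and the Joukowski reduction is a sensible first step. However, the proposal stops short of a proof at precisely the point where the work lies. You acknowledge that the round-annulus bound after the Joukowski map is too weak (indeed, the closest outer-puncture image sits at $|w|\approx 2.86$, giving modulus $\approx\tfrac{1}{2\pi}\log(2.86/\sqrt3)\approx 0.08$, far below the required $\approx 0.18$), and you then list several possible repairs --- a M\"obius map to infinity, a Schwarz--Christoffel map, an unspecified Jordan curve $\Gamma$ --- without constructing any of them, computing a modulus, or stating a final certified bound. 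As written, no annulus is actually produced, so no inequality is actually proved; this is a genuine gap, not a routine omission.

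The paper's solution is to work in the model $\CHAT\setminus\{\pm a,\pm ia,\pm 1/a,\pm i/a\}$ (rather than the $\rho$-model), where all eight punctures lie on the real or imaginary axes. This lets one take the annulus $U_1=\CC\setminus\bigl(I_1\cup I_2\cup I_3\cup I_4\bigr)$ with slits $I_1=(-\infty,-a]$, $I_2=[-1/a,a]$, $I_3=i[1/a,\infty)$, $I_4=i(-\infty,-1/a]$: the inner slit $I_2$ absorbs three punctures, and the outer tripod $I_1\cup I_3\cup I_4\cup\{\infty\}$ absorbs the other five. A chain of elementary maps (a rotation-dilation, $\arcsin$, $e^{2iz}$, and an affine M\"obius) then carries $U_1$ to the canonical domain $\CC\setminus([0,1]\cup[t,\infty))$, whose extremal length is $2(K/K')(1/\sqrt t)$ for an explicit algebraic $t$, and the numerical bound follows by interval arithmetic. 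To salvage your approach you would need to supply an analogous explicit construction; note that in your $\rho$-model the four non-real punctures lie on the unit circle rather than on coordinate axes, which is precisely why straight-line slits (and hence the $\arcsin$/$\exp$ chain) are unavailable there and why a comparably clean conformal chain is hard to see.
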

\begin{proof}
We represent the punctured cube by $Y = \CHAT \setminus \{ \pm a, \pm i a, \pm 1/a, \pm i/a \}$ where $a = (\sqrt{3}+1)/\sqrt{2}$ as in \lemref{lem:plus_sign}. One diamond curve is given by a curve $\beta$ surrounding $[-a,1/a]$.

Consider the annulus $U_1 = \CC \setminus  \bigcup_{n=1}^4 I_n$ where 
\[I_1=(-\infty,-a], I_2 = [-1/a, a], I_3 = i [1/a, \infty), \text{ and } I_4 = i (-\infty, -1/a].\]
We have that $U_1 \subset Y$ and the core curves in $U_1$ are homotopic to $\beta$. As such, we have 
\[
\el(\beta, Y) \leq \el(U_1)
\]
by the the third part of Jenkins's theorem (or by the monotonicity of extremal length under conformal embeddings). We will now compute $\el(U_1)$ explicitly through a sequence of conformal maps.

We first multiply by $-ia$ to obtain the complement $U_2$ of the intervals $i[a^2, \infty)$, $i[-a^2, 1]$, $[1, \infty)$ and $(-\infty, -1]$. Recall that the sine function maps the open vertical strip 
\[
S = \left\{ z \in \CC \mid -\frac{\pi}{2} < \re(z) < \frac{\pi}{2} \right\}
\]
biholomorphically onto the set $ \CC \setminus \left((-\infty, -1] \cup [1,\infty)\right)$. Denote its inverse on that set by $\arcsin$, and let $U_3 = \arcsin(U_2)$.

Since $\sin(iy)= i \sinh(y)$ for all $y\in \RR$, we have that $\arcsin(iv) = i \arcsinh(v)$ for all $v \in \RR$ as well. This means that $\arcsin(i[a^2, \infty)) = i [\arcsinh(a^2),\infty)$ and $\arcsin(i[-a^2, 1])=i [-\arcsinh(a^2),\arcsinh(1)]$, so that
\[
U_3 = S \setminus \left( i [-\arcsinh(a^2),\arcsinh(1)] \cup i [\arcsinh(a^2),\infty) \right)
\]

We then apply $f(z)=e^{2iz}$, which sends $S$ to $\CC\setminus (-\infty, 0]$. To compute the images of the two slits, observe that for every $x\in \RR$ we have 
\[
e^{2\arcsinh(x)} = \left(e^{\arcsinh(x)}\right)^2 = \left(x+\sqrt{x^2+1} \right)^2=:g(x),
\]
so that 
\[
U_4 := f(U_3) = \CC \setminus (-\infty, z_1] \cup [z_2, z_4])
\]
where $z_1 = g(-a^2)$, $z_2 = g(-1)$ and $z_4 = g(a^2)$.

We apply one last transformation $\displaystyle h(z) = \frac{z_4-z}{z_4-z_2}$ to get \[U_5 := h(U_4) = \CC \setminus ( [0, 1] \cup [t, \infty))\] where $\displaystyle t = \frac{z_4-z_1}{z_4-z_2}$. By the conformal invariance of extremal length and \lemref{lem:add_punctures}, we find 
\[
\el(U_1) = \el(U_5) = 2(K/K')(1/\sqrt{t}).
\]

Note that we can write
\begin{align*} 
t &= \frac{z_4-z_1}{z_4-z_2} \\
&= \frac{g(a^2)-g(-a^2)}{g(a^2)-g(-1)} \\
&= \frac{\left(a^2+\sqrt{a^4+1} \right)^2-\left(-a^2+\sqrt{a^4+1} \right)^2}{\left(a^2+\sqrt{a^4+1} \right)^2 -\left(\sqrt{2}-1 \right)^2} \\
&= \frac{4a^2 \sqrt{a^4+1}}{\left(a^2+\sqrt{a^4+1} \right)^2 -\left(\sqrt{2}-1 \right)^2}.
\end{align*}
Using the value of $a = \frac{\sqrt{3}+1}{\sqrt{2}}$, we get $a^2 = 2 +\sqrt{3}$ and
\[
a^4 + 1 =  8+4\sqrt{3} = \left( \sqrt{2}(\sqrt{3}+1) \right)^2
\]
so that $\sqrt{a^4 + 1}=\sqrt{2}(\sqrt{3}+1) = 2a$. The expression for $t$ thus simplifies to
\[t = \frac{8a^3}{(a^2 + 2a)^2 -\left(\sqrt{2}-1 \right)^2}.\]

The \texttt{Arb} package in \texttt{SageMath} (see the ancillary file \href{https://arxiv.org/src/2404.00336/anc}{\texttt{integrals}}) gives
\[ 2(K/K')(1/\sqrt{t}) < 5.53564498781518 < 5.535645. \qedhere\]
\end{proof} 

Together with Minsky's inequality, this yields an improved lower bound on the extremal length of triangle curves.

\begin{cor} \label{cor:triangle}
The extremal length of any triangle curve on the cube punctured at its vertices is at least $2.890358$.
\end{cor}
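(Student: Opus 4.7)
The plan is to combine Minsky's inequality with the upper bound on the extremal length of diamond curves from \propref{prop:diamond}. For a given triangle curve $\alpha$ on the punctured cube $X$, the first step is to produce a diamond curve $\beta$ satisfying $i(\alpha,\beta) = 4$. Geometrically, $\alpha$ bounds an isoceles right half-face of the cube, while $\beta$ encloses the union of an edge and an adjacent face diagonal lying in a common plane of reflection. By positioning the diamond so that its supporting edge and diagonal each meet the triangle's perimeter twice transversely (for example, by arranging the two configurations to share a common vertex of the cube, with the respective edges and diagonals interleaving around it), one obtains representatives that cross in exactly four points while bounding no bigons. The bigon criterion then pins down $i(\alpha,\beta) = 4$.

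With this intersection count in hand, Minsky's inequality gives
\[
\el(\alpha, X) \geq \frac{i(\alpha,\beta)^2}{\el(\beta, X)} = \frac{16}{\el(\beta, X)}.
\]
Inserting the estimate $\el(\beta, X) \leq 5.535645$ from \propref{prop:diamond} yields
\[
\el(\alpha, X) \geq \frac{16}{5.535645} > 2.890358,
\]
which is the advertised bound.

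The only nontrivial point is the combinatorial first step, namely, selecting concrete representatives on the cube and verifying that they are in minimal position with precisely four intersections. This is a finite check that can be carried out by inspection of a single picture, using the bigon criterion to certify the intersection number. Once that combinatorial verification is in place, the rest of the argument reduces to substituting into Minsky's inequality and applying \propref{prop:diamond}, so no further analytic or numerical machinery is required.
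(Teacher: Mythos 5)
Your proposal is correct and follows essentially the same approach as the paper: exhibit a diamond curve $\beta$ meeting the triangle curve $\alpha$ transversely in four points with no bigons (hence $i(\alpha,\beta)=4$), and then apply Minsky's inequality together with the bound $\el(\beta,X)\leq 5.535645$ from \propref{prop:diamond}. The paper simply points to Figures \ref{fig:triangle} and \ref{fig:diamond} for the explicit representatives and the finite combinatorial check, which you correctly identify as the only nontrivial step.
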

\begin{proof}
Let $\alpha$ be the triangle curve in \figref{fig:triangle} and $\beta$ the diamond curve in \figref{fig:diamond}. Then $\alpha$ and $\beta$ are transverse, do not form any bigon, and intersect $4$ times, so that $i(\alpha,\beta)=4$. Minky's inequality then yields
\[
\el(\alpha, X) \geq \frac{i(\alpha,\beta)^2}{\el(\beta,X)} \geq \frac{16}{5.535645} > 2.890358. \qedhere
\]
\end{proof}

\section{Finishing the proof} \label{sec:last}

We now combine all the previous extremal length estimates to obtain our main result.

\begin{thm}
The extremal length systole of the cube punctured at its vertices is realized by the edge curves.
\end{thm}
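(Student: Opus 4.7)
The plan is to assemble the five extremal length estimates already proved into a single comparison. By \cite[Theorem~3.2]{BolzaEL}, since $X$ has finitely generated fundamental group and is not the thrice-punctured sphere, the extremal length systole of $X$ (if realized) is realized by a simple closed curve. So it suffices to exhibit a number $U$ such that every essential simple closed curve on $X$ either is an edge curve, or has extremal length strictly greater than $U$, while edge curves themselves have extremal length at most $U$. The natural choice is $U = 2.8431861$, the upper bound on the extremal length of edge curves provided by \corref{cor:upper_bound}.

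Next I would split the competition into cases based on the classification coming from the flat metric. By \corref{cor:lower_flat}, any essential simple closed curve that is not an edge, diagonal, triangle, face, or double-edge curve already has extremal length at least $3 > U$, so such curves are ruled out immediately. This leaves exactly four types of competitors to dispose of, for each of which we have already shown an appropriate bound:
\begin{itemize}
\item diagonal curves: extremal length $\geq 4.13\ldots > U$ by \propref{prop:diagonal};
\item face curves: extremal length $\geq 3.12\ldots > U$ by \propref{prop:face};
\item double-edge curves: extremal length $\geq 5.62\ldots > U$ by \corref{cor:double-edge};
\item triangle curves: extremal length $\geq 2.89\ldots > U$ by \corref{cor:triangle}.
\end{itemize}

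In each case the inequality with the upper bound on edge curves is strict, so no curve of these four types can realize the systole. Combined with the first step, this forces the infimum of $\el(\cdot,X)$ over essential simple closed curves to be realized among edge curves (which are all homotopic up to an isometry of $X$, so they have a common extremal length). This proves the theorem, and the explicit formula announced in \thmref{thm:main} then follows from \corref{cor:reduction_to_6} together with \propref{prop:formulas} specialized to $A=0$, $B=\rho^2$, $C=1/\rho^2$, $W=\omega^2$, after rescaling by the factor of two coming from the squaring map.

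The only step that is genuinely tight, and hence the one that would have been the main obstacle if it had not already been handled, is the triangle curve bound: the gap $2.890358$ versus $2.8431861$ is small, and obtaining it required the detour through \propref{prop:diamond}, where a diamond curve was bounded above using an explicit sequence of conformal maps to a standard four-punctured sphere. All other comparisons have comfortable margins and reduce to citing the computations of \secref{sec:explicit} or the flat-metric lower bound from \secref{sec:trajectories}.
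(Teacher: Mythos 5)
Your proof is correct and follows the same structure as the paper's: use \cite[Theorem~3.2]{BolzaEL} to restrict attention to simple closed curves, invoke \corref{cor:lower_flat} to dispose of all curves outside a short list, and then check the four remaining types (diagonal, face, double-edge, triangle) against the upper bound from \corref{cor:upper_bound} using the corresponding estimates. The paper presents the cases in a slightly different order but the comparison, the constants, and the logic are identical, including the observation that the triangle case is the only tight one.
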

\begin{proof}
Let $\gamma$ be an edge curve on the punctured cube $X$. By \corref{cor:upper_bound}, we have \[\el(\gamma,X) < 2.843187.\] If $\alpha$ is a double-edge curve, then \[\el(\alpha,X)\geq 5.627487 > \el(\gamma,X)\] by \corref{cor:double-edge}, so that $\alpha$ is not a systole. If $\alpha$ is a triangle curve, then \[\el(\alpha,X)\geq 2.890358 > \el(\gamma,X)\] by \corref{cor:triangle}, so $\alpha$ is not a systole. If $\alpha$ is a face curve, then
\[
\el(\alpha,X) \geq 3.12680384539222-8.07\times10^{-15} > \el(\gamma,X)
\]
by \propref{prop:face}, so $\alpha$ is not a systole. If $\alpha$ is a diagonal curve, then
\[
\el(\alpha,X) \geq 4.1335929781133-2.81\times10^{-14} > \el(\gamma,X)
\]
by \propref{prop:diagonal}, so $\alpha$ is not a systole. If $\alpha$ is an essential simple closed curve in $X$ which is not an edge curve, a face curve,  a diagonal curve, a double-edge curve, or a triangle curve, then
\[
\el(\alpha,X) \geq  3 > \el(\gamma,X).
\]
by \corref{cor:lower_flat}, so $\alpha$ is not a systole. Since the extremal length systole can only be realized by essential simple closed curves \cite[Theorem 3.2]{BolzaEL}, it is realized by the edge curves and only them.
\end{proof}

In other words, the extremal length systole of the cube is equal to the extremal length of the edge curves, which is the first half of \thmref{thm:main}. The second half of the theorem is obtained by multiplying the first formula in \propref{prop:formulas} by $2$ in view of \corref{cor:reduction_to_6}. Note that the formula in the introduction is written in terms of $F_r = f_r/3$ instead of $f_r$, which does not change the ratio of integrals.

\bibliographystyle{amsalpha}
\bibliography{biblio}

\end{document}